\CompileMatrices\SelectTips{cm}{12}
\theoremstyle{plain}
\newtheorem{Thm}{\sc Theorem}[section]
\newtheorem{Theorem}[Thm]{\sc Theorem}
\newtheorem{Corollary}[Thm]{\sc Corollary}
\newtheorem*{Corollary*}{\sc Corollary}
\newtheorem{Proposition}[Thm]{\sc Proposition}
\newtheorem*{Proposition*}{\sc Proposition}
\newtheorem{Lemma}[Thm]{\sc Lemma}
\newtheorem{Conjecture}[Thm]{\sc Conjecture}
\theoremstyle{definition}
\newtheorem{Definition}[Thm]{Definition}
\theoremstyle{remark}
\newtheorem{Remark}[Thm]{Remark}
\newtheorem*{Example*}{Example}
\newtheorem*{Remark*}{Remark}
\newcommand{\EE}{{\mathbb E}}
\newcommand{\NN}{{\mathbb N}}
\newcommand{\ZZ}{{\mathbb Z}}
\newcommand{\PP}{{\mathbb P}}
\newcommand{\QQ}{{\mathbb Q}}
\newcommand{\RR}{{\mathbb R}}
\newcommand{\cA}{{\mathcal A}}
\newcommand{\cB}{{\mathcal B}}
\newcommand{\cS}{{\mathcal S}}
\newcommand{\cE}{{\mathcal E}}
\newcommand{\cF}{{\mathcal F}}
\newcommand{\cG}{{\mathcal G}}
\newcommand{\cK}{{\mathcal K}}
\newcommand{\cL}{{\mathcal L}}
\newcommand{\cO}{{\mathcal O}}
\newcommand{\cP}{{\mathcal P}}
\newcommand{\Pic}{{\mathop{\rm Pic \, }}}
\newcommand{\Coh}{{\mathop{\operatorname{Coh}\, }}}
\newcommand{\Refl}{{\mathop{\operatorname{Ref}\, }}}
\newcommand{\ch}{{\mathop{\rm ch \, }}}
\newcommand{\rk}{{\mathop{\rm rk \,}}}
\newcommand{\Sym}{{\mathop{{\rm Sym \, }}}}
\newcommand{\Spec}{{\mathop{{\rm Spec\, }}}}
\newcommand{\Vect}{{\mathop{{\rm Vect \,}}}}
\newcommand{\Sing}{{\mathop{{\rm Sing \,}}}}
\newcommand{\Div}{\mathop{\rm Div\, }}
\newcommand{\refl}{{\mathop{\operatorname{\rm ref} }}}
\begin{document}

\markboth {\rm }{}

\title{Intersection theory and Chern classes on normal varieties}
\author{Adrian Langer} \date{\today}

\maketitle


{\noindent \sc Address:}\\
Institute of Mathematics, University of Warsaw,
ul.\ Banacha 2, 02-097 Warszawa, Poland\\
e-mail: {\tt alan@mimuw.edu.pl}

\medskip

\begin{abstract}
We study intersection theory and Chern classes of reflexive sheaves on normal varieties.
In particular, we define generalization of Mumford's intersection theory on normal surfaces to 
higher dimensions. We also define and study the second Chern class for reflexive sheaves on normal varieties. 
We use these results to prove some Bogomolov type inequalities on normal varieties in positive characteristic.
We also prove some new boundedness results  on normal varieties in positive characteristic.
\end{abstract}

\section*{Introduction}

Let $X$ be a normal projective variety of dimension $n$ defined over an algebraically closed field $k$.
It is well known how to intersect one Weil divisor $D$ with a collection of $(n-1)$ line bundles $L_1,...,L_{n-1}$.
The intersection number is simply the degree of the $0$-cycle $c_1(L_1)\cap ...\cap c_1(L_{n-1})\cap [D]$.
Mumford also  introduced a $\QQ$-valued intersection theory for two Weil divisors in case $X$ is a surface
(see \cite[Examples 7.1.16 and 8.3.11]{Fu}). To do so he passes to resolution of singularities $f: \tilde X\to X$, 
defines the pullback $f^*D$ of a Weil divisor $D$ as a $\QQ$-divisor and then intersects the pullbacks of Weil divisors on $\tilde X$. 

M. Enokizono in \cite[Appendix]{En} used a similar method for  higher-dimensional varieties, considering very special pullback, taking into account only exceptional divisors with codimension $2$ centers. His approach
uses the existence of resolution of singularities in the characteristic zero case and de Jong's alterations \cite{dJ} in positive characteristic.

We present a different approach to this result working directly on the singular variety. We make some computations in the Grothendieck group of $X$, similar to Kleiman's approach to intersection  numbers for Cartier divisors (see \cite[VI.2]{Ko}). Using some asymptotic Riemann--Roch formulas, we also reconstruct Mumford's  intersection theory on surfaces without passing to a resolution of singularities. The final outcome is the following result (see Section \ref{Section:intersection} for more precise results). 

\begin{Theorem}\label{main1}
For any Weil divisors $D_1$ and $D_2$ on $X$ there exists a unique $\ZZ$-multilinear symmetric form $\Pic X^{\times (n-2)}\to \QQ$, $(L_1,... ,L_{n-2})\to D_1.D_2.L_1...L_{n-2}$ such that:
\begin{enumerate}
\item If $D_1$ and $D_2$ are Cartier divisors then  
	$$ D_1.D_2.L_1...L_{n-2}= \int_X  c_1(\cO_X (D_1))\cap c_1(\cO_X(D_2))\cap c_1(L_1)\cap ...\cap c_1(L_{n-2})\cap [X].$$
\item If $D_2$ is a Cartier divisor then  
$$ D_1.D_2.L_1...L_{n-2}= \int_X c_1(\cO_X (D_2))\cap c_1(L_1)\cap ...\cap c_1(L_{n-2})\cap  [D_1]\in \ZZ.$$
\item If $L_1,...,L_{n-2}$ are very ample
then   for a general complete intersection surface  $S\in |L_1|\cap ...\cap |L_{n-2}|$ we have 
$$D_1.D_2.L_1...L_{n-2} =(D_1)_S.(D_2)_S.$$
\end{enumerate}
\end{Theorem}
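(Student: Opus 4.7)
\medskip

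\noindent\textbf{Proof proposal.} My plan is to separate the argument into uniqueness, an intrinsic construction for normal surfaces, and extension to arbitrary dimension via an Euler-characteristic polynomial. Uniqueness is immediate: any form satisfying (3) is determined on tuples of very ample line bundles, and since every element of $\Pic X$ is a $\ZZ$-linear combination of very ample classes, $\ZZ$-multilinearity forces it to be unique.

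For existence, the hard case is $n=2$, where one must define $D_1.D_2$ for two Weil divisors on a normal projective surface $X$ without passing to a resolution. I would start from the Kleiman-style formula
\[
D_1.D_2 \;:=\; \chi(\cO_X) - \chi(\cO_X(-D_1)) - \chi(\cO_X(-D_2)) + \chi(\cO_X(-D_1-D_2)),
\]
where the $\cO_X(-D_i)$ are interpreted as reflexive rank-one sheaves, and verify via asymptotic Riemann--Roch that the resulting $\QQ$-number agrees with Mumford's pairing. On a resolution $f:\tilde X\to X$, the discrepancy between the naive Euler characteristic above and the Cartier intersection $f^*D_1.f^*D_2$ is supported on the exceptional locus and can be computed by comparing $Rf_*\cO_{\tilde X}(\lfloor f^*D_i\rfloor)$ with $\cO_X(D_i)$.

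For arbitrary $n$, and very ample $L_1,\ldots,L_{n-2}$, I would define $D_1.D_2.L_1\cdots L_{n-2}$ as the coefficient of $m_1\cdots m_{n-2}$ in the polynomial
\[
Q(m_1,\ldots,m_{n-2}) := \sum_{\epsilon\in\{0,1\}^2}(-1)^{\epsilon_1+\epsilon_2}\,\chi\bigl(X, \cO_X(-\epsilon_1 D_1-\epsilon_2 D_2)\otimes L_1^{m_1}\otimes\cdots\otimes L_{n-2}^{m_{n-2}}\bigr).
\]
Asymptotic Riemann--Roch for reflexive sheaves makes $Q$ agree, for $m_i\gg 0$, with a polynomial of degree at most $n-2$, and its leading coefficient should equal the surface pairing $(D_1)_S.(D_2)_S$ on a general $S\in|L_1|\cap\cdots\cap|L_{n-2}|$, proving (3) and independence of $S$ simultaneously. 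Multilinearity and symmetry are visible from the polynomial description, so the form extends uniquely to $\Pic X^{\times(n-2)}$. Property (1) reduces, when the $D_i$ are Cartier, to the standard Riemann--Roch computation of the top intersection number; property (2) follows by specializing and applying the projection formula to the inclusion of the support of the Cartier divisor $D_2$.

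The main obstacle is the surface step. Because $\cO_X(D_i)$ is not locally free at the singular points, the Koszul argument valid for Cartier divisors fails, and one must identify the correct singular correction. Handling this via an intrinsic asymptotic Riemann--Roch formula on the singular surface, and matching its leading term with Mumford's pairing after pullback to a resolution, is where the real work lies.
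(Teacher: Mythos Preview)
Your Kleiman-style surface formula
\[
\chi(\cO_X) - \chi(\cO_X(-D_1)) - \chi(\cO_X(-D_2)) + \chi(\cO_X(-D_1-D_2))
\]
is always an \emph{integer}, while Mumford's pairing on a normal surface is in general only rational (e.g.\ a ruling $D$ on the projective quadric cone has $D^2=\tfrac12$). So this cannot be the definition, and the same defect propagates to your higher-dimensional construction: the class $\sum_\epsilon(-1)^{|\epsilon|}[\cO_X(-\epsilon_1 D_1-\epsilon_2 D_2)]$ does lie in $K_{n-2}(X)$, so your $Q$ is a numerical polynomial of total degree $\le n-2$ in the $m_i$, but the coefficient of $m_1\cdots m_{n-2}$ in any such polynomial is extracted by iterated first differences and is therefore again an integer. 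All of your asymptotics run in the Cartier directions $L_i$; no limit is taken in the Weil directions $D_1,D_2$, and that is precisely where the rational denominator has to come from.

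The paper's construction puts the asymptotics on $D$ instead. One first sets
\[
D^2.L_1\cdots L_{n-2}\;:=\;2\lim_{m\to\infty}\frac{\chi\bigl(X,\,c_1(L_1)\cdots c_1(L_{n-2})\cdot[\cO_{mD}]\bigr)}{m^2}
\]
and then recovers $D_1.D_2.L_1\cdots L_{n-2}$ by polarization. After reducing to very ample $L_i$ and restricting to a general complete intersection surface $S$, the point is to show $\chi(S,\cO_S(-mD_S))=\tfrac12 m^2 D_S^2+O(m)$. This is where the real input lies: the Riemann--Roch formula for reflexive rank~$1$ sheaves on a normal surface carries a singular correction $a(x,\cO_S(-mD_S))$ at each singular point, and one needs this correction to remain \emph{bounded} as $m\to\infty$ (Theorem~\ref{bound-on-a(x,E)} with $r=1$), so that it disappears after dividing by $m^2$. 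Your closing paragraph is right that the surface step is the crux, but the mechanism you outline cannot manufacture a non-integer; what is missing is the $m\to\infty$ limit in $mD$ together with the uniform bound on the local error terms.
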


The main aim of the paper is to determine whether one can similarly generalize the theory of Chern classes of vector bundles to reflexive sheaves on normal varieties. More precisely, if $X$ is smooth, then we have a well-defined Chern character from the Grothendieck group of vector bundles to the Chow ring. This is a homomorphism of rings that satisfies various remarkable properties, such as the Riemann--Roch theorem (see \cite[Chapter 15]{Fu}).  Using operational Chern classes, the Riemann--Roch theorem can also  be  proven for vector bundles on singular varieties (see \cite{BFM}) but there is no ring structure on the direct sum of the Chow groups.
However, if $X$ is a normal projective surface then one can use Mumford's intersection theory to define a rational Chow ring structure. In that case one can ask whether there exists a well-defined Chern character from the Grothendieck group of reflexive sheaves on $X$ to this rational (or real) Chow ring. 
We provide a definition of such a Chern character, which is conjecturally also a ring homomorphism (see Subsection \ref{K-theory-reflexive}).
To do so we revise the theory of Chern classes of reflexive sheaves on normal surfaces that was studied in \cite{La0} in the complex analytic setup. Here, we develop an algebraic approach that works in arbitrary characteristic.  In particular, we prove the following theorem (see Definition \ref{definition-c_2-surfaces} and Proposition \ref{global-basic-properties-c_2}).

\begin{Theorem}
For any normal proper algebraic surface $X$ defined over an algebraically closed field $k$ and  any 
coherent reflexive $\cO_X$-module $\cE$ on $X$ one can define its second Chern class $c_2 (\cE)\in A_0(X)\otimes \RR$
so that the following conditions are satisfied:
\begin{enumerate}
\item If $\cE$ has rank $1$ then  $c_2(\cE)=0$.
\item If $\cE$ is a vector bundle on $X$ then this class coincides with $c _2 (\cE)\cap [X].$
\item If  $ \pi: Y\to X$ is a finite morphism from a normal  surface $Y$ then
$$\int_Yc_2 ( \pi ^{[*]}\cE) = \deg \pi \cdot \int_Xc_2(\cE),$$
where $ \pi ^{[*]}\cE$ is the reflexive hull of $ \pi ^*\cE$.
\end{enumerate}
\end{Theorem}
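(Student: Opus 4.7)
The plan is to define $c_2(\cE)$ intrinsically on $X$ by approximating $\cE$ by a vector bundle and then formally imposing the Whitney identity $c(\cE) = c(\cF) \cdot c(\cQ)$, with the various intersection-theoretic factors interpreted inside the Weil-versus-Cartier intersection theory of Theorem \ref{main1}. Since $\cE$ is reflexive on a normal surface, it is locally free outside the finite set $\Sing X$, and a standard argument (taking sufficiently many general sections of a positive twist $\cE \otimes L$ and dualizing) yields a short exact sequence
\begin{equation*}
0 \to \cF \to \cE \to \cQ \to 0
\end{equation*}
with $\cF$ locally free of rank $\rk \cE$ and $\cQ$ supported in codimension $\geq 1$. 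This reduction works on $X$ itself and therefore avoids any appeal to resolution of singularities, which is essential in positive characteristic.

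Guided by Whitney multiplicativity, the definition I would take is
\begin{equation*}
c_2(\cE) := c_2(\cF) \cap [X] \;+\; c_1(\cF) \cdot c_1(\cQ) \;+\; c_2(\cQ) \;\in\; A_0(X) \otimes \RR,
\end{equation*}
where $c_2(\cF) \cap [X]$ is the classical operational Chern class of the vector bundle $\cF$, the cross-term $c_1(\cF) \cdot c_1(\cQ)$ is the Cartier-vs.-Weil intersection that is a well-defined integer $0$-cycle by part~(2) of Theorem \ref{main1}, and $c_2(\cQ)$ is obtained from a locally free presentation of $\cQ$ combined with Mumford's $\QQ$-valued self-intersection of the support. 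The passage to real coefficients in $A_0(X) \otimes \RR$ is forced precisely by this last term.

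Verification of the listed properties then splits as follows. Property~(2) is immediate: if $\cE$ is already locally free, take $\cF = \cE$, so $\cQ = 0$ and the formula reduces to the classical $c_2(\cF) \cap [X]$. For property~(1), write the rank-$1$ reflexive sheaf as $\cE = \cO_X(D_1)$ and choose $\cF = \cO_X(D_0)$ with $D_0 \leq D_1$ Cartier; a formal expansion of $c(\cO_X(D_1)) = c(\cO_X(D_0)) \cdot c(\cQ)$ then makes the three contributions cancel, using that $c_2$ of a line bundle vanishes. For property~(3), apply $\pi^{[*]}$ throughout: functoriality of classical Chern classes handles the $c_2(\cF)$ piece, Theorem \ref{main1} supplies the multiplication-by-$\deg \pi$ compatibility for the Cartier-vs.-Weil piece, and the $c_2(\cQ)$ piece reduces to the behaviour of Mumford's self-intersection under finite maps.

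The main obstacle will be showing independence of the approximation $\cF$. Given two choices $\cF_1, \cF_2 \subset \cE$, I would pass to a common locally free subsheaf $\cF_3 \subset \cF_1 \cap \cF_2$ of rank $\rk \cE$ and compare the formula for each $\cF_i$ with that for $\cF_3$; the required identity pairs Mumford self-intersections of the codimension-$1$ quotients $\cF_i/\cF_3$ against classical Chern classes, and they must cancel exactly. Since the self-intersections are only rational, the delicate point is checking that the \emph{a priori} rational ambiguity really vanishes, which is exactly the place where the precise form of Mumford's pairing recalled in Theorem \ref{main1} enters. A secondary technical point is verifying (3) for the $c_2(\cQ)$ contribution: this requires pull-back compatibility of Mumford's intersection beyond the Cartier case, together with a careful comparison between $\pi^*\cQ$ and the quotient $\pi^{[*]}\cE/\pi^*\cF$, which may differ by torsion supported over the singular points of $X$.
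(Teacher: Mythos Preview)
Your proposal takes a genuinely different route from the paper, but it contains a real gap at the point you yourself flag as delicate: the term $c_2(\cQ)$.

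The paper does \emph{not} work intrinsically on $X$. It passes to a resolution $f:\tilde X\to X$, develops a local theory of relative Chern classes $c_2(f_x,\cF)$ for vector bundles on the resolution (Definition \ref{definition-c_2-surfaces}), and then sets
\[
c_2(\cE):=f_*c_2(\cF)-\sum_{x\in f(E)}c_2(f_x,\cF)\,[x]
\]
for any $\cF\in\Vect(\tilde X)$ with $f_{[*]}\cF\simeq\cE$. The local invariants $c_2(f_x,\cF)$ are defined as an \emph{infimum} over all generically finite covers $\tilde\pi:\tilde Y\to\tilde X$ and all filtrations of $\tilde\pi^*\cF$ by line bundles; this is where real (a priori irrational) numbers enter, and this infimum definition is what makes property (3) for an \emph{arbitrary} finite $\pi$ essentially automatic (Proposition \ref{basic-properties-c_2}(4)). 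Well-definedness is handled by Proposition \ref{Wahl-independence}, and properties (1)--(3) then follow directly from the local Proposition \ref{basic-properties-c_2} together with the classical theory on $\tilde X$ (Proposition \ref{global-basic-properties-c_2}).

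Your approach collapses precisely at the definition of $c_2(\cQ)$. You write that it comes from ``a locally free presentation of $\cQ$ combined with Mumford's $\QQ$-valued self-intersection'', but on a singular normal surface a coherent sheaf need not admit a finite locally free resolution, so a two-term presentation $\cG'\hookrightarrow\cG\twoheadrightarrow\cQ$ leaves you with $\cG'$ merely reflexive, and defining $c_2(\cG')$ is the original problem again. Mumford's pairing from Theorem \ref{main1} only produces \emph{rational} numbers, so it cannot by itself account for the real coefficients you say are forced by $c_2(\cQ)$; indeed, if your construction genuinely used only Mumford intersections it would prove rationality of $c_2(\cE)$, which the paper leaves open (cf.\ Theorem \ref{Wahl's-conj-for-quotients} and the surrounding discussion). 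The same circularity infects your sketch of well-definedness and of property (3): both reduce to controlling $c_2$ of torsion or merely reflexive auxiliary sheaves, which you have not defined. In short, the difficulty the paper isolates and solves locally on a resolution has not been bypassed---it has been relocated into the undefined symbol $c_2(\cQ)$.
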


In the above theorem $A_0(X)$ stands for the group of $0$-dimensional cycles modulo rational equivalence on $X$. Our approach to the above theorem is along similar lines as in \cite{La0} but we give simplified and more detailed versions of several proofs. We also obtain the following result about additional terms in the Riemann--Roch theorem (see Theorems \ref{bound-on-a(x,E)} and \ref{RR-theorem-easy}).

\begin{Theorem}\label{main2}
	Let $X$ be a normal proper algebraic surface defined over an algebraically closed field.
	Then there exists a constant $C$ depending only on $X$ such that for every rank $r$  coherent reflexive $\cO_X$-module $\cE$ on $X$ we have
	$$\left|\chi (X, \cE) -\left(\frac {1}{2}c_1 (\cE). (c_1(\cE) -K_X)- \int_Xc_2 (\cE) +r\chi (X, \cO_X)\right)\right|\le C r^2.$$
\end{Theorem}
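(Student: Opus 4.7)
The plan is to compare $\chi(X,\cE)$ with the Riemann--Roch expression on a resolution, absorb the main geometric discrepancy into the definition of $\int_X c_2(\cE)$, and then control the residual error pointwise at each singular point.

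Concretely, first choose a resolution $f\colon \tilde X\to X$ (which exists in arbitrary characteristic for surfaces) and let $\tilde \cE:=f^{[*]}\cE=(f^*\cE)^{**}$. Since $\tilde X$ is smooth and $\tilde \cE$ is reflexive, it is locally free, so ordinary Riemann--Roch gives
$$\chi(\tilde X,\tilde\cE)=\tfrac12 c_1(\tilde\cE).(c_1(\tilde\cE)-K_{\tilde X})-c_2(\tilde\cE)+r\chi(\tilde X,\cO_{\tilde X}).$$
Because $f$ is birational and both $\cE$ and $f_*\tilde\cE$ are reflexive agreeing outside the finite singular locus, $f_*\tilde\cE=\cE$, and $R^i f_*\tilde\cE=0$ for $i\ge 2$ for dimensional reasons. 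Hence
$$\chi(X,\cE)=\chi(\tilde X,\tilde\cE)+h^0(X,R^1f_*\tilde\cE).$$
The plan is then to rewrite every term on the smooth side in terms of its counterpart on $X$: the classes $c_1(\tilde\cE)$ and $f^*c_1(\cE)$ differ by a $\QQ$-combination of exceptional curves, so Mumford-type intersection theory (Theorem \ref{main1}) converts $\tfrac12 c_1(\tilde\cE).(c_1(\tilde\cE)-K_{\tilde X})$ into $\tfrac12 c_1(\cE).(c_1(\cE)-K_X)$ up to a local correction at each singular point, and $r\chi(\tilde X,\cO_{\tilde X})$ differs from $r\chi(X,\cO_X)$ by $r\cdot \sum_x \dim_k(R^1f_*\cO_{\tilde X})_x$, a quantity linear in $r$ with coefficient depending only on $X$. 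The second Chern class on $X$ is designed precisely so that the difference between $c_2(\tilde\cE)$ and $\int_X c_2(\cE)$ absorbs the remaining global part of the error, leaving only a sum $\sum_{x\in\Sing X}a(x,\cE)$ of local contributions.

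Thus everything reduces to the local bound $a(x,\cE)\le C_x\,r^2$ stated in Theorem~\ref{bound-on-a(x,E)} (this is the step I would expect to be the hardest, and where I would spend most effort). The estimate should be proven at each isolated singularity $x\in X$ by analyzing the module $\cE_x$ on $\Spec\cO_{X,x}$: after passing to the local resolution, one can decompose the reflexive hull $\tilde\cE|_{f^{-1}(x)}$ and bound the length of $R^1f_*\tilde\cE$ at $x$, together with the difference of the two candidate second Chern numbers, by a quadratic expression in the rank. The essential point is that the local invariants of a reflexive module of rank $r$ (minimal number of generators of $\cE_x$, lengths of various $\Tor$s on the exceptional fibre, self-intersection defect of $c_1$) all grow at most linearly in $r$, so their pairwise products grow at most as $r^2$ with coefficients determined by the singularity type of $x$. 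Taking $C:=\sum_{x\in\Sing X}C_x$ yields the uniform constant.

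Finally, collecting the chain of equalities and inequalities gives
$$\Bigl|\chi(X,\cE)-\bigl(\tfrac12 c_1(\cE).(c_1(\cE)-K_X)-\textstyle\int_X c_2(\cE)+r\chi(X,\cO_X)\bigr)\Bigr|\le \sum_{x\in\Sing X} a(x,\cE)\le C r^2,$$
as claimed. The whole argument is an analogue of the complex-analytic approach of \cite{La0}, but carried out algebraically, using the intersection theory of Section \ref{Section:intersection} in place of transcendental input, which is what allows the result to hold in arbitrary characteristic.
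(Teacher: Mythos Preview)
Your approach is essentially the paper's own: Theorem~\ref{RR-theorem-easy} establishes the exact Riemann--Roch identity $\chi(X,\cE)=\tfrac12 c_1(\cE).(c_1(\cE)-K_X)-\int_Xc_2(\cE)+r\chi(X,\cO_X)+\sum_{x\in\Sing X}a(x,\cE)$ via the Leray spectral sequence and Riemann--Roch on $\tilde X$ exactly as you describe, and then Theorem~\ref{bound-on-a(x,E)} supplies the local estimate. One slip in your final display: the left-hand side equals $\bigl|\sum_x a(x,\cE)\bigr|$, not $\sum_x a(x,\cE)$, and you need the two-sided bound $Ar^2\le a(x,\cE)\le Br$ of Theorem~\ref{bound-on-a(x,E)} (not merely an upper bound) since $a(x,\cE)$ can be negative.
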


We use this result to construct a good theory for the second Chern class (or character) for reflexive sheaves in higher dimensions. This works well in case of positive characteristic or for varieties with at most quotient singularities in codimension $\ge 2$ in characteristic $0$. In particular, our theory in the characteristic zero case generalizes the one developed in \cite[Section 3]{GKPT}. 

Here we state one of the main results in positive characteristic (see Theorem
\ref{properties-of-ch_2}). Let us mention that till now  the theory of Chern classes of reflexive sheaves on singular varieties that are well-behaved under coverings (e.g., Mumford's theory of Chern classes of $\QQ$-sheaves) was always restricted to the characteristic zero case.

\begin{Theorem}\label{main3}
Assume that $k$ has positive characteristic. For any normal projective variety $X/k$ and for any 
coherent reflexive $\cO_X$-module $\cE$ on $X$ there exists a $\ZZ$-multilinear symmetric 
form $\int _X c _2 (\cE): \Pic X ^{\times (n-2)}\to \RR$ such that:
\begin{enumerate}
\item If $\cE$ is a vector bundle on $X$ then $$\int _X c _2 (\cE)L_1...L_{n-2}=\int_X c _2 (\cE)\cap c_1(L_1)\cap ...\cap c_1(L_{n-2})\cap[X].$$
\item If $k\subset K$ is an algebraically closed field extension then 
$$\int _{X_K} c _2 (\cE_K)(L_1)_K...(L_{n-2})_K=\int _X c _2 (\cE)L_1...L_{n-2}.$$
\item If $n>2$ and  $ L_1$ is very ample then for a very general hypersurface $H\in |L_1|$ we have
$$\int _X c_2 (\cE)L_1...L_{n-2}=\int _{H}c_2 (\cE|_{H})L_2|_{H}...L_{n-2}|_{H}.$$
\end{enumerate}
\end{Theorem}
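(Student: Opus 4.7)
The plan is to bootstrap from the surface case---the previous theorem together with the Riemann--Roch estimate of Theorem \ref{main2}---by restricting to very general complete intersection surfaces, then extend to arbitrary line bundles by multilinearity.

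First I would treat the case when $L_1, \ldots, L_{n-2}$ are very ample. For $m \gg 0$ choose a very general complete intersection surface $S = H_1 \cap \ldots \cap H_{n-2}$ with $H_i \in |L_i^m|$; Bertini-type arguments combined with generic-restriction results ensure $S$ is a normal projective surface on which the iterated reflexive restriction $\cE_S := (\cE|_S)^{**}$ is well-behaved. Then define
$$\int_X c_2(\cE)\, L_1 \ldots L_{n-2} := \frac{1}{m^{n-2}} \int_S c_2(\cE_S),$$
using the surface-level $c_2$ of the previous theorem. Finally extend to all line bundles by writing each $L_i = A_i \otimes B_i^{-1}$ with $A_i, B_i$ very ample and taking the unique $\ZZ$-multilinear symmetric extension.

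Well-definedness is the crux. For fixed $m$, independence of the choice of very general $S$ would follow by moving $S$ in its parameter family: Theorem \ref{main2} implies that $\int_S c_2(\cE_S)$ can be read off, up to an error dominated by the $Cr^2$ term (which does not affect the relevant coefficient), from the asymptotic Euler characteristic of $\cE \otimes L_1^{Na_1} \otimes \cdots \otimes L_{n-2}^{Na_{n-2}}$ as $N \to \infty$; this asymptotic invariant is locally constant on the parameter space of complete intersections, hence constant on very general points. To vary $m$, I would exploit the finite Frobenius morphism $F : X \to X$, for which $F^* L_i = L_i^p$; property (3) of the surface-level $c_2$ applied to the induced Frobenius $F_S : S \to S$ gives $\int_S c_2(F_S^{[*]} \cE_S) = p^2 \int_S c_2(\cE_S)$, making the normalization $1/m^{n-2}$ consistent under $m \mapsto pm$, which combined with the previous asymptotic argument yields consistency for all $m, m' \gg 0$. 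Properties (1)--(3) then follow: (1) for a vector bundle $\cE_S = \cE|_S$ and the surface-level $c_2$ coincides with the classical one, so the projection formula gives the claim; (2) all ingredients (very ampleness, very generality, reflexive hull, surface $c_2$) are stable under algebraically closed field extensions; (3) is essentially the definition, since very general hypersurface sections of very general hypersurface sections are themselves very general complete intersection surfaces.

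The principal obstacle is the independence of $S$: the $Cr^2$ estimate of Theorem \ref{main2} must be leveraged uniformly in a flat family so that asymptotic Riemann--Roch recovers the intersection number unambiguously. The independence of $m$ is where positive characteristic is essential---the finiteness of Frobenius, unavailable in characteristic zero, is what makes the $m$-normalization consistent, and circumventing this requires the additional quotient-singularity hypothesis used in the characteristic zero case.
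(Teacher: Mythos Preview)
Your overall plan --- restrict to a complete intersection surface, invoke the surface $c_2$, then extend multilinearly --- is in the right spirit, but the crucial step, independence of the choice of $S$, is not established by the argument you give. You claim $\int_S c_2(\cE_S)$ can be read off from the asymptotics of $\chi(X,\cE\otimes L_1^{Na_1}\otimes\cdots\otimes L_{n-2}^{Na_{n-2}})$ as $N\to\infty$, with the $Cr^2$ error from Theorem~\ref{main2} ``not affecting the relevant coefficient.'' This is wrong: on the surface $S$ the Riemann--Roch formula puts $\int_S c_2(\cE_S)$ in the \emph{constant} term of this polynomial in $N$, exactly where the $Cr^2$ error sits. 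Twisting by line bundles moves $c_1$ but not $c_2$, so no twisting asymptotic separates $\int_S c_2(\cE_S)$ from the bounded correction $\sum_x a(x,\cE_S)$, and the latter could in principle jump with $S$. (The extra parameter $m$ with $H_i\in|L_i^m|$ is also unnecessary and adds a spurious consistency problem; once the $L_i$ are very ample, $m=1$ is enough.)

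The paper resolves this by using a Frobenius asymptotic rather than a twisting asymptotic, and in a way that sidesteps the independence question entirely. It \emph{defines}
\[
\int_X \ch_2(\cE)\,L_1\cdots L_{n-2}\ :=\ \lim_{m\to\infty}\frac{\chi\bigl(X,\ c_1(L_1)\cdots c_1(L_{n-2})\cdot F_X^{[m]}\cE\bigr)}{p^{2m}},
\]
a K-theoretic expression intrinsic to $X$, with no surface chosen; well-definedness is then automatic and the only issue is convergence. For that one reduces by K-theoretic multilinearity to very ample $L_i$, base-changes to an uncountable field, picks a single very general complete intersection surface $S$ on which all the $(F_X^{[m]}\cE)|_S$ are simultaneously reflexive, and identifies the displayed expression with $\chi(S,F_S^{[m]}(\cE|_S))$, whose limit exists by the surface Riemann--Roch. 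The point is that under Frobenius $\int_S c_2$ scales by $p^{2m}$ while the error stays bounded, so dividing by $p^{2m}$ kills the error --- precisely what twisting cannot do. Your Frobenius idea, which you deployed only for ``$m$-independence,'' is in fact the mechanism needed for $S$-independence as well; the paper's formulation just makes the whole issue disappear.
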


It is easy to see that this second Chern class is uniquely determined by equality
$$\int_Xc_2 ( F_X ^{[*]}\cE)L_1...L_{n-2} = p^2 \int_Xc_2(\cE)L_1...L_{n-2}$$
for the Frobenius morphism $F_X$,
 and  by the Riemann--Roch type inequality analogous to that from Theorem \ref{main2} (see Remark \ref{determinacy-c_2-higher-dim}). This suggests that one can define $\int _X \ch _2 (\cE)L_1...L_{n-2}$ as the limit
 $$\lim _{m\to \infty }\frac {  \chi (X, c_1(L_1)...c_1(L_{n-2}) \cdot {F_X^{[m]}\cE}  ) }{p^{2m}},$$
 where $c_1(L_1)...c_1(L_{n-2}) \cdot {F_X^{[m]}\cE} $ denotes the class in the Grothendieck group of coherent sheaves on $X$, obtained by intersecting the class of ${F_X^{[m]}\cE}$ with the product of first Chern classes of line bundles $L_1,..., L_{n-2}$ (see Subsection \ref{Grothendieck-group}).
The main problem in the proof is to show that such a limit exists.

\medskip

We give some applications of the obtained results. For example, we prove a general Bogomolov type inequality (see Theorem \ref{Bogomolov's-inequality}), Bogomolov's inequality for strongly semistable reflexive sheaves on normal varieties (see Corollary \ref{Bogomolov-strongly-ss}) and various boundedness results
(see, e.g.,  Corollary \ref{boundedness-4}). 
As a final application we show a quick proof of  certain boundedness theorem by Esnault and Srinivas on F-divided sheaves on normal varieties in positive characteristic (see \cite[Theorem 2.1]{ES}). 

Applications of the above theory to general restriction theorems and Bogomolov's inequality (also for Higgs sheaves) can be found in \cite{La4}. Other applications to non-abelian Hodge theory and Simpson's correspondence for singular varieties in positive characteristic are contained in \cite{La3}. After the first version of the paper was written, there appeared another application to  the abundance conjecture for threefolds in positive characteristic
(see \cite{Xu}).

\medskip

The structure of the paper is as follows. In the first section we gather some auxiliary results. In Section 2
we study intersection theory on normal varieties and in particular we prove Theorem \ref{main1}. In Section 3 we study relative Chern classes for resolutions of surface singularities. Then in Section 4 we use them to define Chern classes for reflexive sheaves on normal proper surfaces. In Section 5 we extend these results to higher dimensions proving Theorem \ref{main3}. Finally in Section 6 we give some applications of the obtained results.

\medskip

\subsection*{Notation}

Let $X$ be an integral normal  locally Noetherian scheme.
We say that an open subset $U\subset X$ is \emph{big} if every irreducible component of $X\backslash U$
has codimension $\ge 2$ in $X$.
A vector bundle on $X$ is a finite locally free $\cO_X$-module. The category of vector bundles on $X$ will be denoted by $\Vect (X)$.

If $X$ is an algebraic variety defined over some algebraically closed field $k$ then we write
$Z^1(X)$  for the group of Weil divisors on $X$. We also write $A_m(X)$ for the  group of $m$-dimensional cycles modulo rational equivalence on $X$ (see \cite[1.3]{Fu}) and $A^m(X)$ for $A_{
\dim X-m}(X)$.

\section{Preliminaries}

\subsection{Reflexive sheaves}\label{reflexive-sheaves}

In this subsection $X$ is an integral locally Noetherian scheme.
Let  $\Refl(\cO_X)$  be the category of coherent reflexive $\cO_X$-modules. It is a full subcategory of the category $\Coh(\cO_X)$ of coherent  $\cO_X$-modules. 
The inclusion functor $\Refl(\cO_X)\to \Coh (\cO_X)$ comes with a left adjoint $(\cdot )^{**}:\Coh(\cO_X)\to \Refl (\cO_X)$ given by the reflexive hull. The category $\Refl(\cO_X)$  is an additive category with kernels and cokernels and it comes with an associative and symmetric tensor product $\hat\otimes$ defined by
$$\cE\hat\otimes \cF:=(\cE\otimes _{\cO_X} \cF)^{**}.$$
The following well-known lemma can be found in \cite[Lemma 0EBJ]{SP}.

\begin{Lemma}\label{restriction-to-open} 
Let $j: U\hookrightarrow X $ be an open subscheme  with complement $Z$ such that the depth of $\cO_{X,z}$ is $\ge 2$ for all $z\in Z$. Then  $j_*$ and $j^*$ define adjoint equivalences of categories $\Refl (\cO_X)$ and $\Refl(\cO_U)$. In particular, the above assumptions are satisfied if $U$
is a big open subset of an integral locally Noetherian normal scheme $X$.  
\end{Lemma}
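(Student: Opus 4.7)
The plan is to reduce the statement to a local cohomology computation. The key input is the exact sequence
$$0\to \cH^0_Z(\cF)\to \cF\to j_*j^*\cF\to \cH^1_Z(\cF)\to 0,$$
which identifies the adjunction unit $\cF\to j_*j^*\cF$ as an isomorphism precisely when the first two local cohomology sheaves supported on $Z$ vanish. By the standard characterization these vanish if and only if $\mathop{\rm depth}_z \cF_z\ge 2$ for every $z\in Z$.

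The first step is to verify this depth inequality for every reflexive coherent $\cF$ on $X$. The key algebraic fact I would invoke is: if $R$ is a Noetherian local ring with $\mathop{\rm depth} R\ge 2$ and $M$ is any finitely generated $R$-module, then $\Hom_R(M,R)$ also has depth $\ge 2$. This follows by dualizing a finite presentation $R^{a}\to R^{b}\to M\to 0$ to get an exact sequence $0\to M^\vee\to R^{b}\to R^{a}$, so that $M^\vee$ is a second syzygy; a short depth chase using the standard inequality $\mathop{\rm depth} N\ge \min(\mathop{\rm depth} F,\mathop{\rm depth} N'+1)$ for $0\to N\to F\to N'\to 0$ with $F$ free (and the fact that submodules of free modules are torsion free, hence have depth $\ge 1$) yields the claim. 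Since a coherent reflexive $\cF$ is locally of the form $N^{\vee\vee}$, this forces $\mathop{\rm depth}_z \cF_z\ge 2$ at every $z\in Z$, so $\cF\xrightarrow{\sim} j_*j^*\cF$.

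For the converse direction I need to show that $j_*\cG$ is coherent and reflexive on $X$ for any reflexive coherent $\cG$ on $U$. I would first extend $\cG$ to some coherent sheaf $\tilde\cG$ on $X$; such coherent extensions always exist on a locally Noetherian scheme. Its reflexive hull $\tilde\cG^{**}$ is coherent and reflexive on $X$, and restricts on $U$ to $\cG^{**}=\cG$. Applying the previous step to $\tilde\cG^{**}$, the natural map $\tilde\cG^{**}\to j_*j^*\tilde\cG^{**}=j_*\cG$ is an isomorphism, so $j_*\cG$ is automatically coherent and reflexive. The counit $j^*j_*\cG\to \cG$ is an isomorphism since $j$ is an open immersion, so the two adjoint functors are mutually quasi-inverse equivalences of categories.

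The step I expect to be the main technical obstacle is the depth-doubling property for duals used in the second paragraph; the rest of the argument is either formal adjunction nonsense or follows from standard coherent-extension results. The consequent application to a big open subset $U$ of an integral normal locally Noetherian scheme $X$ is immediate from Serre's criterion: a normal Noetherian local ring of dimension $\ge 2$ satisfies $(S_2)$ and in particular has depth $\ge 2$, and every $z\in Z=X\setminus U$ has codimension $\ge 2$ by the definition of bigness, so $\mathop{\rm depth}\cO_{X,z}\ge 2$ as required.
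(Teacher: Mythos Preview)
Your argument is correct and is the standard one: the local cohomology exact sequence together with the depth estimate $\mathop{\rm depth}_z\cF_z\ge 2$ for reflexive $\cF$ (obtained by exhibiting a dual as a second syzygy) handles the unit, and the coherent-extension-plus-reflexive-hull trick handles essential surjectivity of $j^*$. The paper itself does not give a proof; it simply cites \cite[Lemma 0EBJ]{SP}, whose proof is along the same lines as yours.
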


Let us also recall the following standard lemma (see, e.g., \cite[Tag 0EBF]{SP}).

\begin{Lemma}
Let $f: X\to Y$ be a flat morphism of integral locally Noetherian schemes. If $\cF$ is a coherent reflexive $\cO_Y$-module then $f^*\cF$ is reflexive on $X$.
\end{Lemma}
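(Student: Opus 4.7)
The plan is to verify that the canonical evaluation map $f^*\cF \to (f^*\cF)^{**}$ is an isomorphism by writing it as a composition of natural isomorphisms obtained from base change for $\cHom$.

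The key algebraic fact I would establish first is the following: for any flat morphism $f:X\to Y$ of locally Noetherian schemes and any coherent $\cO_Y$-module $\cG$, the natural base change map
$$f^*\cHom_{\cO_Y}(\cG,\cO_Y)\longrightarrow \cHom_{\cO_X}(f^*\cG,\cO_X)$$
is an isomorphism. To see this, locally choose a finite presentation $\cO_Y^m\to \cO_Y^n\to \cG\to 0$ (which exists since $\cG$ is coherent on a locally Noetherian scheme). Applying $\cHom(-,\cO_Y)$ gives a left exact sequence $0\to \cHom(\cG,\cO_Y)\to \cO_Y^n\to \cO_Y^m$, and pulling back via the exact functor $f^*$ (using flatness) yields a left exact sequence $0\to f^*\cHom(\cG,\cO_Y)\to \cO_X^n\to \cO_X^m$. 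On the other hand, pulling back the presentation gives $\cO_X^m\to \cO_X^n\to f^*\cG\to 0$, and applying $\cHom(-,\cO_X)$ yields the same three term sequence with $\cHom(f^*\cG,\cO_X)$ in place of $f^*\cHom(\cG,\cO_Y)$. Comparison of the two gives the desired isomorphism, compatible with the canonical base change map.

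Granting this, I would assemble the proof as follows. Since $\cF$ is coherent reflexive, there is a canonical isomorphism $\cF\cong \cHom(\cHom(\cF,\cO_Y),\cO_Y)$. Observe that $\cF^{*}=\cHom(\cF,\cO_Y)$ is itself coherent, since $Y$ is locally Noetherian and $\cF$ is coherent. Applying $f^*$ and the base change isomorphism twice, first to the coherent sheaf $\cF^{*}$ and then to $\cF$, gives a chain
$$f^*\cF\;\cong\;f^*\cHom(\cF^{*},\cO_Y)\;\cong\;\cHom(f^*\cF^{*},\cO_X)\;\cong\;\cHom(\cHom(f^*\cF,\cO_X),\cO_X)=(f^*\cF)^{**}.$$
Finally, a routine diagram chase (tracking where a local section goes) checks that this composition agrees with the canonical evaluation map $f^*\cF\to (f^*\cF)^{**}$, so the latter is an isomorphism and $f^*\cF$ is reflexive. (Coherence of $f^*\cF$ is automatic from coherence of $\cF$ and the fact that $X$ is locally Noetherian.)

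The only real obstacle is the $\cHom$ base change statement, and in it the essential use of flatness: without flatness, the left exactness of the pullback of $0\to \cHom(\cG,\cO_Y)\to \cO_Y^n\to \cO_Y^m$ is lost, and the argument breaks. Everything else is a formal manipulation built on that single input.
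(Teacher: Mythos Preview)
Your argument is correct and is the standard proof of this fact. Note that the paper does not actually give its own proof of this lemma: it merely records it as a ``standard lemma'' with a reference to \cite[Tag 0EBF]{SP}. The Stacks Project argument is essentially the one you wrote, namely the base change isomorphism $f^*\cHom_{\cO_Y}(\cG,\cO_Y)\cong\cHom_{\cO_X}(f^*\cG,\cO_X)$ for coherent $\cG$ and flat $f$, applied twice. So there is nothing to compare against, and your write-up is a clean self-contained version of the standard proof.
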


In general, pull back of a reflexive sheaf need not be reflexive.
If $f: X\to Y$ is a morphism between normal schemes and $\cE$
is a  coherent reflexive $\cO_Y$-module then we set
$$f^{[*]}\cE=(f^*\cE)^{**}.$$
If $\cF$ is a coherent reflexive $\cO_X$-module then we set
$$f_{[*]}\cF=(f_*\cE)^{**}.$$
Note that in general these operations define functors on reflexive sheaves that are not functorial with respect to morphisms.

\medskip
If $X$ is normal and $\cE\in \Refl (\cO_X)$ then we also use the notation
$$\Sym ^{[m]}\cE:= (\Sym ^m\cE)^{**}$$
for $\cE\in \Refl (\cO_X)$.

If $X$ is a normal scheme of finite type over an algebraically closed field $k$ of characteristic $p>0$ 
then $F_X:X\to X$ denotes the absolute Frobenius morphism.  Let us recall that by Kunz's theorem $F_X$ is flat only at regular points of $X$.
If $\cE\in \Refl (\cO_X)$
then for every $m $ we set $$F_X^{[m]} \cE:= (F_X^{m})^{[*]} \cE.$$

\medskip

The following well-known lemma is a corollary of Serre's criterion on normality.

\begin{Lemma}\label{reflexivization-iso-in-codim-1}
Let us assume that $X$ is normal. If $\cE$ is a coherent torsion free $\cO_X$-module then 
the canonical map $\cE\to \cE^{**}$ is an isomorphism in codimension $1$, i.e., it is injective and every irreducible component of the support of its cokernel has codimension $\ge 2$ in $X$.
\end{Lemma}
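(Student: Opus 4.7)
The plan is to check injectivity at the generic point and then analyze the cokernel at each codimension $1$ point using the fact that $X$ is normal, hence regular in codimension $1$.

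First, for injectivity, let $\eta$ be the generic point of $X$ and $K = \cO_{X,\eta}$ the function field. Because $\cE$ is coherent and torsion free, the canonical map $\cE \to \cE_\eta$ is injective (the kernel is precisely the torsion subsheaf, which vanishes by hypothesis). Since $\cE_\eta$ is a finite-dimensional $K$-vector space, the evaluation map $\cE_\eta \to (\cE_\eta)^{**}$ is an isomorphism, and using that duals commute with localization for coherent modules, $(\cE^{**})_\eta = (\cE_\eta)^{**}$. The composite $\cE \to \cE^{**} \to \cE_\eta = (\cE^{**})_\eta$ agrees with the torsion-free inclusion, so $\cE \to \cE^{**}$ must be injective.

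Next, for the cokernel, I want to show that $(\cE^{**}/\cE)_x = 0$ at every codimension $1$ point $x \in X$. Since $X$ is normal, Serre's criterion (specifically property $R_1$) gives that $\cO_{X,x}$ is a discrete valuation ring. The stalk $\cE_x$ is finitely generated over $\cO_{X,x}$ and torsion free (this follows directly from $\cE$ being torsion free on $X$), and every finitely generated torsion free module over a DVR is free. Thus $\cE_x \cong \cO_{X,x}^r$, and for a free module of finite rank the biduality map is an isomorphism. Combined with the fact that forming the double dual commutes with localization (since $\cHom$ commutes with flat base change for coherent modules), this gives $(\cE^{**})_x \cong (\cE_x)^{**} \cong \cE_x$ with the natural map being the identity. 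Hence the cokernel has empty stalk at $x$, and its support therefore contains no codimension $1$ point, which is exactly the claim.

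There is no serious obstacle here: the whole argument rests on Serre's $R_1$ property for normal schemes and the classification of finitely generated torsion free modules over a DVR. The only mildly technical point to keep straight is the commutation of $(\cdot)^{**}$ with localization, which is standard for coherent sheaves on locally Noetherian schemes and should simply be invoked.
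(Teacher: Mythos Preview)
Your argument is correct and is precisely the standard deduction from Serre's criterion that the paper has in mind; the paper does not spell out a proof but simply records the lemma as a well-known corollary of Serre's criterion on normality, and your use of the $R_1$ property together with the fact that finitely generated torsion free modules over a DVR are free is exactly how one unpacks that remark.
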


\medskip

 Let $X$ be an excellent integral normal scheme (this is the situation in which we will use the remark below). 
 Then the regular locus
 $U:=X_{\mathrm {reg}}$ is open  and the above lemma implies that $j^*: \Refl (\cO_X)\to \Refl(\cO_U)$ and 
 $j_*:  \Refl (\cO_U)\to  \Refl (\cO_X)$ are adjoint equivalences of categories, where $j: U\hookrightarrow X $ denotes the open embedding.
  
\subsection{Mumford's intersection theory on normal surfaces}\label{Mumford's-intersection}

Let $X$ be a normal proper surface defined over an algebraically closed field and let
 $f: \tilde X\to X$ be any resolution of singularities. For any Weil divisor $D$ on $X$ one can 
define its pull back $f^*D\in A^1(\tilde X)\otimes \QQ$ as  the only class of a $\QQ$-divisor 
for which $f_*f^*D=D$ and $\int_{\tilde X}[f^*D]\cap [E_i]=0$ for all exceptional curves $E_i$ (cf. \cite[Example 8.3.11]{Fu}).
For any two Weil divisors $D_1$ and $D_2$ we set 
$$[D_1]\cap [D_2]:= f_* ([f^*D_1]\cap [f^*D_2]).$$
This defines a $\ZZ$-bilinear symmetric form $A^1(X)\times A^1 (X)\to A_0(X)\otimes \QQ$, $(D_1,D_2)\to [D_1]\cap [D_2]$.
 In the following we write $D_1.D_2$ for the rational number $\int_X [D_1]\cap [D_2]$.
 
 \subsection{Bertini's theorem}
 
 Here we recall some Bertini type theorems. In particular, we have Bertini's theorem for smoothness, irreducibility and reducedness for unramified morphisms (see \cite[Theoreme 6.3 ]{Jo}) and Seidenberg's Bertini's theorem for normality for embeddings into the projective space (see \cite[Theorem 7']{Se}; see also \cite[Corollary 1.1.15]{HL} for a quick proof). However, we need these theorems in a slightly more general set-up and it is convenient to state them uniformly using \cite[Theorem 1 and Remark below Corollary 2]{CGM}. 
 
Let $\cP$ be one of the following properties of a locally noetherian scheme: being smooth, normal,  reduced or irreducible. Then we have the following result:

 \begin{Theorem}\label{Bertini}
 Let $X$ be a scheme of finite type over an algebraically closed field and let $\varphi: X\to \PP ^n_k$
be a morphism defined by a linear system $\Lambda$.
Let us assume that $\varphi$ has separably generated residue field extensions. 
If $X$ has property $\cP$ then there exists a nonempty Zariski open subset $U\subset \Lambda$ such that
every hypersurface $H\in \Lambda$ also has property $\cP$.
 \end{Theorem}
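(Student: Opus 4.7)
The plan is to apply the main Bertini theorem of \cite[Theorem 1]{CGM}, which provides a uniform axiomatic treatment of Bertini-type results for a wide class of local properties of locally Noetherian schemes. Since $k$ is algebraically closed, geometric reducedness coincides with reducedness and geometric irreducibility with irreducibility, so the four properties under consideration reduce to the classical ones: regular, normal, reduced, irreducible.

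First I would verify that each of these properties lies within the axiomatic framework of \cite{CGM}; this is essentially the content of the remark following \cite[Corollary 2]{CGM}. The axioms require $\cP$ to be a local property stable under suitable operations (generization, flat base change, passage to general fibers of smooth morphisms), and all four properties satisfy these requirements. One must also handle the extension from hyperplane sections of $\PP^n_k$ to hypersurfaces in an arbitrary linear system $\Lambda$: this is routine, by composing $\varphi$ with an appropriate Veronese embedding, under which the separability hypothesis on residue field extensions is preserved.

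Finally, one invokes \cite[Theorem 1]{CGM}: given that $\varphi$ has separably generated residue field extensions and $X$ satisfies $\cP$, the theorem yields a nonempty Zariski open $U\subset \Lambda$ such that every $H\in U$ also has $\cP$. The main obstacle is not the proof itself, which is a direct application of \cite{CGM}, but rather checking that the separability hypothesis on $\varphi$ supplies exactly the input required by the CGM framework; this hypothesis is unavoidable in positive characteristic and is precisely what distinguishes the statement from the classical case over characteristic zero, since without it general hyperplane sections can fail to be regular, reduced, or irreducible.
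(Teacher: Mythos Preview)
Your approach is correct and coincides with the paper's: the paper does not give a proof at all but simply records the theorem as a consequence of \cite[Theorem 1 and Remark below Corollary 2]{CGM}, exactly as you propose. One minor remark: your Veronese step is superfluous here, since the elements of $\Lambda$ are already pullbacks of hyperplanes under $\varphi$, so \cite[Theorem 1]{CGM} applies directly without passing to higher-degree hypersurfaces.
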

 
As usual we will quote this theorem by saying that a general $H\in \Lambda$ has property $\cP$.

 \subsection{Some results on Grothendieck's group of a normal variety}\label{Grothendieck-group}
 
Let $X$ be a normal projective variety of dimension $n$ defined over an algebraically closed field $k$.
Let $K(X)$ denote the Grothendieck's group of $X$. For any $0\le m\le n$ we denote by $K_m(X)$ the subgroup of $K(X)$ generated by classes of sheaves, whose support has dimension at most $m$. For a line bundle $L$ on $X$ we have an additive endomorphism
of $K(X)$ defined by
$$\cF\to c_1(L)\cdot \cF:= \cF- L^{-1}\otimes \cF$$
(see \cite[Chapter VI, Definition 2.4]{Ko}). 

 \medskip
 
Let us recall the following result (see  \cite[Chapter VI, Proposition 2.5]{Ko} and its proof):

\begin{Lemma}\label{Kollar}
\begin{enumerate}
\item $c_1(L)\cdot K_m(X)\subset K_{m-1}(X)$ for every $m$.
\item $c_1(L_1)$ and $c_1 (L_2)$ commute for any two line bundles $L_1$ and $L_2$.
\item  For any two line bundles $L_1$ and $L_2$ we have equality of endomorphisms
$$c_1(L_1\otimes L_2)= c_1(L_1)+c_1(L_2)-c_1(L_1)\cdot c_1(L_2).$$
\item If $Y\subset X$ is integral  and $L|_Y\simeq \cO_Y (D)$ for some effective 
Cartier divisor $D$ on $Y$ then $c_1(L)\cdot \cO_Y=\cO_D$.
\end{enumerate}
\end{Lemma}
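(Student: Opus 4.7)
The plan is to verify parts (2), (3), and (4) by direct computation inside the Grothendieck group $K(X)$, and then to deduce (1) from dévissage combined with the Koszul-type short exact sequence supplied by (4).

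Parts (2) and (3) are purely formal. Unwinding $c_1(L)\cdot\cF=\cF-L^{-1}\otimes\cF$ and applying it twice gives
$$c_1(L_1)\cdot c_1(L_2)\cdot\cF=\cF-L_1^{-1}\otimes\cF-L_2^{-1}\otimes\cF+L_1^{-1}\otimes L_2^{-1}\otimes\cF,$$
which is manifestly symmetric in $L_1$ and $L_2$, proving (2); rearranging the same expression yields $c_1(L_1\otimes L_2)\cdot\cF=c_1(L_1)\cdot\cF+c_1(L_2)\cdot\cF-c_1(L_1)\cdot c_1(L_2)\cdot\cF$, which is (3). For (4), since $D$ is an effective Cartier divisor on the integral scheme $Y$, the corresponding section of $L|_Y\simeq \cO_Y(D)$ is a nonzerodivisor, so one has the exact sequence
$$0\to L^{-1}|_Y\to \cO_Y\to \cO_D\to 0$$
of $\cO_X$-modules, and (4) follows at once from $c_1(L)\cdot\cO_Y=[\cO_Y]-[L^{-1}|_Y]$.

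For (1), the key reduction is standard dévissage: every coherent sheaf $\cF$ on the Noetherian scheme $X$ admits a finite filtration whose successive quotients are of the form $\cO_{Y_i}$ with $Y_i$ integral and contained in $\Supp \cF$. It therefore suffices to show $c_1(L)\cdot[\cO_Y]\in K_{m-1}(X)$ whenever $Y\subseteq X$ is integral of dimension $\le m$. If $\dim Y<m$ this is immediate, since $\cO_Y$ and $L^{-1}|_Y$ are both already supported in dimension $\le m-1$. Otherwise, I would fix an ample line bundle on $Y$ (for instance the restriction of some very ample line bundle on the projective scheme $X$) and observe that for large $n$ both this line bundle and its twist by $L|_Y$ have nonzero global sections; since $Y$ is integral, such sections are nonzerodivisors and so cut out effective Cartier divisors $D^-$ and $D^+$ with $L|_Y\simeq \cO_Y(D^+-D^-)$. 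Applying (4) to $D^+$ and tensoring the resulting sequence by $L^{-1}|_Y$ produces
$$0\to L^{-1}|_Y\to \cO_Y(D^-)\to L^{-1}|_Y\otimes\cO_{D^+}\to 0,$$
which, combined with the analogous sequence for $D^-$, gives
$$[\cO_Y]-[L^{-1}|_Y]=[L^{-1}|_Y\otimes\cO_{D^+}]-[\cO_{D^-}].$$
Both classes on the right are supported on $D^+\cup D^-$, a closed subscheme of dimension $m-1$, so they lie in $K_{m-1}(X)$, as required.

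The only delicate point is that $Y$ may be singular and non-normal in the reduction for part (1), so one must work with Cartier rather than Weil divisors. Projectivity of $X$ (and therefore of $Y$) is precisely what allows any line bundle on $Y$ to be expressed as a formal difference of effective Cartier divisors via the twist-by-ample trick above; once this is in hand, the remainder is bookkeeping with two short exact sequences inside $K(X)$.
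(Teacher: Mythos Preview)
Your proof is correct and follows the standard argument; the paper itself does not give a proof but simply cites \cite[Chapter VI, Proposition 2.5]{Ko}, whose proof is essentially what you wrote (d\'evissage to $\cO_Y$ for $Y$ integral, then write $L|_Y$ as a difference of effective Cartier divisors using ampleness).

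One small slip worth fixing: in your displayed sequence for part (1), the quotient is not literally $L^{-1}|_Y\otimes\cO_{D^+}$. If you start from $0\to\cO_Y\to\cO_Y(D^+)\to\cO_{D^+}(D^+)\to 0$ and tensor by $L^{-1}|_Y$, the middle term is indeed $\cO_Y(D^-)$, but the cokernel is $L^{-1}|_Y\otimes\cO_{D^+}(D^+)\simeq \cO_Y(D^-)|_{D^+}$; similarly the ``analogous sequence for $D^-$'' yields $[\cO_{D^-}(D^-)]$ rather than $[\cO_{D^-}]$. This does not affect the argument, since the only property you use is that these classes lie in $K_{m-1}(X)$, which is clear from their supports.
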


If $D$ is  a Weil divisor on $X$ then we write $[\cO_D]$ for the class of $\cO_X-\cO _X(-D)$ in $K(X)$. 
We will need the following lemmas:

\begin{Lemma}\label{difference}
If $D$ is a Weil divisor on $X$ then $[\cO_D]\in K_{n-1} (X)$.
Moreover, if  we write $D=\sum a_i D_i$ for some   Weil divisors $D_i$  and some integers $a_i$ then the class
$[\cO_D]- \sum  a_i [\cO_{D_i}]$ lies in $K_{n-2}(X)$.
\end{Lemma}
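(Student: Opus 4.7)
The plan is to first fix the convention that for a Weil divisor $D$ on $X$ the symbol $[\cO_D]$ denotes the element $[\cO_X]-[\cO_X(-D)]\in K(X)$, where $\cO_X(-D)$ is the reflexive rank-one sheaf associated to $-D$. This agrees with the usual structure sheaf class in the effective Cartier case (compare Lemma \ref{Kollar}(4)) and is compatible with the operation $c_1(L)\cdot(-)$ on $K(X)$ used throughout Lemma \ref{Kollar}.

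My approach will be to pass to the regular locus $j\colon U=X_{\mathrm{reg}}\hookrightarrow X$, whose complement $Z$ has dimension at most $n-2$, and to exploit the standard localization exact sequence
$$K_0(Z)\to K_0(X)\xrightarrow{j^*} K_0(U)\to 0.$$
The key lifting principle I plan to use is: if $\alpha\in K_0(X)$ satisfies $j^*\alpha\in K_m(U)$ for some $m\geq n-2$, then $\alpha\in K_m(X)$. Indeed, any class in $K_m(U)$ is represented by coherent sheaves on $U$ whose supports have dimension at most $m$; taking scheme-theoretic closures of supports in $X$ and extending the sheaves coherently across $X$ produces a lift $\tilde\alpha\in K_m(X)$, and then $\alpha-\tilde\alpha$ lies in the image of $K_0(Z)\subset K_{n-2}(X)\subset K_m(X)$.

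With this in place, both assertions reduce to computations on $U$, where $\cO_U(-D\vert_U)$ and $\cO_U(-D_i\vert_U)$ are genuine line bundles. For the first part, applying Lemma \ref{Kollar}(1) to $L=\cO_U(D\vert_U)$ gives $j^*[\cO_D]=c_1(L)\cdot[\cO_U]\in K_{n-1}(U)$, so the lifting principle yields $[\cO_D]\in K_{n-1}(X)$. For the second part, a short induction based on Lemma \ref{Kollar}(3) shows that modulo endomorphisms landing in $K_{n-2}(U)$ the assignment $L\mapsto c_1(L)\cdot[\cO_U]$ is additive in $L$, because the error term $c_1(L_1)c_1(L_2)\cdot[\cO_U]$ lies in $K_{n-2}(U)$ by two applications of Lemma \ref{Kollar}(1). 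Iterating over the $a_i$ and the $D_i$ yields $j^*\bigl([\cO_D]-\sum a_i[\cO_{D_i}]\bigr)\in K_{n-2}(U)$, and lifting finishes the proof.

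I expect the main obstacle to be the bookkeeping behind the lifting principle, namely verifying carefully that (a) the restriction map on $K_0$ has kernel generated by sheaves pushed forward from $Z$, and (b) coherent sheaves on $U$ admit coherent extensions to $X$ without increasing the dimension of the support. Both are standard for Noetherian schemes, but they are exactly what makes it legitimate to pretend one is working on a smooth variety when computing in $K$-theory modulo $K_{n-2}$.
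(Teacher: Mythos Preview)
Your proposal is correct and follows essentially the same route as the paper: both arguments reduce to the regular locus via the localization sequence $K(Z)\to K(X)\to K(X\setminus Z)\to 0$ and then invoke Lemma~\ref{Kollar}(3) to see that $c_1$ is additive modulo $K_{n-2}$. The only minor difference is in the first assertion, where the paper writes $D=D_1-D_2$ with $D_i$ effective and uses the tautological short exact sequence $0\to\cO_X(-D_i)\to\cO_X\to\cO_{D_i}\to 0$ directly on $X$, whereas you pass to $U$ and apply Lemma~\ref{Kollar}(1) there; both are equally short.
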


\begin{proof}
Since $X$ is normal,  Weil divisors are Cartier outside of a closed subset of codimension $\ge 2$. 
But for any closed subscheme $Y\subset X$ we have a short exact sequence
$$K(Y)\to K(X)\to K(X\backslash Y)\to 0.$$
This together with Lemma \ref{Kollar}, (3) shows that $[\cO_D]- \sum a_i [\cO_{D_i}]$ lies in $K_{n-2}(X)$.

To prove the first part of the lemma let us write $D$ as $D=D_1-D_2$ for some  effective Weil divisors $D_1$  
and $D_2$. Since $D_i$, $i=1,2$ are effective, we have short exact sequences
$$0\to \cO _X(-D_i) \to \cO _X \to \cO _{D_i}\to 0$$
showing that $[\cO_{D_i}]\in K_{n-1} (X)$. Now the first part of the proof implies that $[\cO_D]- [\cO_{D_1}]+[\cO_{D_2}]\in K_{n-2} (X)$, so  $[\cO_D]\in K_{n-1} (X)$.
\end{proof}

\medskip

\begin{Lemma}\label{reflexive-in-K(X)}
Let $\cF$ be a coherent  torsion free $\cO_X$-module of rank $r$. Then the class $\alpha (\cF)$ of  
$\cF- \cO_X^{\oplus r}+[\cO_{-c_1 (\cF)}]$ lies in $K_{n-2}(X)$.
\end{Lemma}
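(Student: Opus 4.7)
The plan is to prove this by induction on the rank $r$, leveraging three ingredients from the preceding subsections: Lemma \ref{Kollar} (multiplicativity of $c_1$ in $K$-theory), Lemma \ref{difference} (the class $[\cO_D]$ is well-defined modulo $K_{n-2}(X)$), and Lemma \ref{reflexivization-iso-in-codim-1} (reflexivization is an isomorphism in codimension one). The overall strategy is to reduce the statement, modulo $K_{n-2}(X)$, to a simple product identity for line bundles on a big open subset of $X$.

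The key auxiliary identity I establish first is that for any two Weil divisors $D_1, D_2$ on $X$,
$$[\cO_X(D_1)] + [\cO_X(D_2)] - [\cO_X] - [\cO_X(D_1+D_2)]\in K_{n-2}(X).$$
Restricting to the big open subset $U \subset X$ on which both $D_1$ and $D_2$ are Cartier, the line bundles $L_i:=\cO_U(D_i)$ satisfy, by Lemma \ref{Kollar}(1) and (3),
$$c_1(L_1)\cdot c_1(L_2)\cdot \cO_U \;=\; \cO_U - L_1^{-1} - L_2^{-1} + (L_1\otimes L_2)^{-1}\;\in\;K_{n-2}(U),$$
and tensoring by $L_1\otimes L_2$ gives the identity on $U$. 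One lifts it back to $X$ via the right-exact sequence $K(X\setminus U)\to K(X)\to K(U)\to 0$, using that $\dim(X\setminus U)\le n-2$ forces classes from $K(X\setminus U)$ to sit inside $K_{n-2}(X)$. A parallel argument, using Lemma \ref{difference} to make sense of $[\cO_{-c_1(\cF)}]$ for a possibly non-Cartier Weil divisor, gives $[\cO_{-c_1(\cF)}]\equiv [\cO_X]-[\cO_X(c_1(\cF))]\pmod{K_{n-2}(X)}$.

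The induction now runs quickly. For $r=1$ the sheaf $\cF$ is torsion free of rank one, so $\cF^{**}\cong \cO_X(c_1(\cF))$, and Lemma \ref{reflexivization-iso-in-codim-1} gives $[\cF]\equiv [\cO_X(c_1(\cF))]\pmod{K_{n-2}(X)}$; combined with the congruence above for $[\cO_{-c_1(\cF)}]$, this closes the base case. For $r>1$, fix a very ample line bundle $A$ on $X$ and an integer $N\gg 0$ so that $H^0(X,\cF\otimes A^N)\neq 0$; any nonzero section produces an injection $A^{-N}\hookrightarrow \cF$, and its saturation $\cL\subset \cF$ is a torsion free rank one subsheaf with torsion free quotient $\cG:=\cF/\cL$ of rank $r-1$. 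On the big open subset where $\cF,\cL,\cG$ are all locally free, taking determinants yields $c_1(\cF)=c_1(\cL)+c_1(\cG)$ as Weil divisor classes on $X$, while in $K(X)$ one has $[\cF]=[\cL]+[\cG]$. Applying the inductive hypothesis to $\cL$ and $\cG$ and using the key identity to combine the resulting divisorial terms yields $\alpha(\cF)\in K_{n-2}(X)$.

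The main technical obstacle is essentially bookkeeping: since $[\cO_D]$ is only well defined modulo $K_{n-2}(X)$ via Lemma \ref{difference}, one must invoke that lemma whenever a non-Cartier Weil divisor appears, and likewise check that the equality $c_1(\cF)=c_1(\cL)+c_1(\cG)$ is an equality of Weil divisor classes on $X$ (which is automatic, as Weil divisors on $X$ are determined by their restrictions to any big open subset).
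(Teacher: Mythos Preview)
Your proof is correct and follows essentially the same approach as the paper: reduce to rank one via a filtration with torsion-free rank-one quotients, handle the base case via Lemma~\ref{reflexivization-iso-in-codim-1}, and combine the pieces using additivity of $[\cO_D]$ modulo $K_{n-2}(X)$. Note that your auxiliary identity is exactly the two-summand case of Lemma~\ref{difference} (rewrite $[\cO_X(D_i)]=[\cO_X]-[\cO_{-D_i}]$), so the separate derivation via restriction to $U$ is redundant, though correct.
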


\begin{proof}
If  $r=1$ and $\cF$ is reflexive then by definition of $c_1 (\cF)$ we have
$\alpha (\cF)=0$ in $K(X)$.  In general, if $r=1$ then
$$\alpha (\cF)=\alpha (\cF)-\alpha (\cF ^{**})= \cF- \cF ^{**}, 
$$
$$\cF- \cO_X^{\oplus r}+[\cO_{-c_1 (\cF)}]= \cF- \cO_X^{\oplus r}+[\cO_{-c_1 (\cF)}] -(\cF ^{**}- \cO_X^{\oplus r}+[\cO_{-c_1 (\cF)}])= \cF- \cF ^{**}, 
$$
so the assertion follows from the fact that the cokernel of
$\cF\to \cF^{**}$ is supported in codimension $\ge 2$.

If $r>1$ then $\cF$ has a filtration $\cF=\cF^0\supset \cF^1\supset ...\supset \cF ^r=0$, whose quotients $\cL_i=\cF^i/\cF^{i+1}$ are rank $1$ torsion free sheaves.
  Then we have
  $$\alpha (\cF)= [\cO _{-c_1(\cF)} ]- \sum  [\cO _{-c_1(\cL _i)} ]+  
  \sum \alpha (\cL _i).$$
  Since $c_1(\cF) =\sum c_1(\cL_i)$ Lemma \ref{difference} shows that $ [\cO _{-c_1(\cF)} ]- \sum  [\cO _{-c_1(\cL _i)} ]$
  lies in $K_{n-2}(X)$. Therefore the assertion follows from the rank $1$ case.
\end{proof}

\section{Intersection theory on normal varieties}\label{Section:intersection}

Let $X$ be a normal projective variety of dimension $n$ defined over an algebraically closed field $k$.
We  write $N^1(X)$ for the group of line bundles on $X$ modulo numerical equivalence. By the N\'eron--Severi theorem of the base, $N^1(X)$ is a finitely generated free $\ZZ$-module.

\subsection{Intersection of a Weil divisor with Cartier divisors}

In this subsection we define a K-theoretic intersection of a Weil divisor with Cartier divisors
and compare it with standard definition using intersections with Chern classes of line bundles.

\begin{Lemma}\label{intersection-with-Weil-div}
 The image of the map $Z^1(X)\times \Pic X ^{\times (n-1)}\to K(X)$ defined by 
$$(D, L_1,..., L_{n-1})\to  
c_1(L_1)\cdot ...\cdot c_1(L_{n-1}) \cdot [\cO_D]$$
is contained in $K_0(X)$. Moreover, this map is $\ZZ$-linear with respect to each variable and
symmetric with respect to $(L_1,..., L_{n-1})$  for fixed Weil divisor $D$.
\end{Lemma}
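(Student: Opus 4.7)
The proof is a bookkeeping exercise built on the two lemmas \ref{Kollar} and \ref{difference} together with a filtration-by-dimension count in $K(X)$. First, to establish that the class lies in $K_0(X)$, I would combine Lemma \ref{difference}, which gives $[\cO_D] \in K_{n-1}(X)$, with Lemma \ref{Kollar}(1), which says that each operator $c_1(L_i)$ lowers the filtration index by one. Applying this $n-1$ times starting from $K_{n-1}(X)$ lands in $K_0(X)$. Symmetry in $(L_1, \ldots, L_{n-1})$ for fixed $D$ is then immediate from the commutativity statement in Lemma \ref{Kollar}(2).

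For $\ZZ$-linearity in each $L_i$, the key observation is that the identity
$$c_1(L \otimes L') = c_1(L) + c_1(L') - c_1(L)\cdot c_1(L')$$
from Lemma \ref{Kollar}(3) fails additivity only by the correction $c_1(L)\cdot c_1(L')$. When this correction is multiplied by the remaining $n-2$ operators and by $[\cO_D]\in K_{n-1}(X)$, Lemma \ref{Kollar}(1) pushes the result into $K_{-1}(X) = 0$, so the correction vanishes on the nose. Linearity under inversion then follows by taking $L' = L^{-1}$ and observing that $c_1(\cO_X)\cdot \cF = \cF - \cF = 0$, combined with the same correction argument.

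For $\ZZ$-linearity in $D$, I would invoke Lemma \ref{difference} in the form $[\cO_{\sum a_i D_i}] - \sum a_i [\cO_{D_i}] \in K_{n-2}(X)$; again, multiplying this discrepancy by $c_1(L_1)\cdots c_1(L_{n-1})$ and applying Lemma \ref{Kollar}(1) lands in $K_{-1}(X) = 0$, so linearity holds exactly.

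There is no real obstacle: the whole content is that the non-additivity corrections from Lemmas \ref{Kollar}(3) and \ref{difference} both sit one filtration step deeper than $[\cO_D]$, which is precisely what the $n-1$ applications of the $c_1(L_i)$ need in order to wipe them out. If anything, the one point requiring attention is to track that the dimension budget is tight — there is exactly zero room to spare, which is what makes the lemma true on the level of $K(X)$ rather than only modulo higher filtration.
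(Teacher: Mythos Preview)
Your proposal is correct and follows essentially the same route as the paper: both arguments use Lemma~\ref{difference} to place $[\cO_D]$ in $K_{n-1}(X)$, then invoke Lemma~\ref{Kollar}(1) repeatedly to reach $K_0(X)$ and to kill the correction terms coming from Lemma~\ref{Kollar}(3) (for linearity in $L_i$) and from Lemma~\ref{difference} (for linearity in $D$), with symmetry following from Lemma~\ref{Kollar}(2). The only cosmetic difference is that you explicitly discuss linearity under inversion $L\mapsto L^{-1}$, which the paper leaves implicit.
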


\begin{proof}
Let us  write  $D=\sum a_iD_i$ for some  prime Weil divisors $D_i$ and some integers $a_i$. 
 By Lemma \ref{Kollar}, (1) we have 
$c_1(L_1)...c_1(L_{n-1}) K_{n-1}(X)\subset K_0(X)$ and $c_1(L_1)...c_1(L_{n-1}) K_{n-2}(X)=0$. Therefore Lemma 
\ref{difference} implies the first assertion. It also shows that
$$c_1(L_1)...c_1(L_{n-1})[\cO_D]= \sum a_i 
c_1(L_1)...c_1(L_{n-1})[\cO_{D_i}].
$$
This proves that the map is $\ZZ$-linear with respect to $D$.
By Lemma \ref{Kollar}, (2) the map is symmetric for fixed $D$. Moreover,  we have
\begin{align*}
&c_1(L_1\otimes M_1)c_1(L_2)...c_1(L_{n-1}) \cdot [\cO_D]-c_1(L_1)...c_1(L_{n-1}) \cdot [\cO_D]-c_1(M_1)c_1(L_2)...c_1(L_{n-1}) \cdot [\cO_D]\\
&=-c_1(L_1)c_1(M_1)c_1(L_2)...c_1(L_{n-1}) \cdot [\cO_D]=0,
\end{align*} 
which finishes the proof that the map is $\ZZ$-linear with respect to all variables.
\end{proof}

\medskip

We  also have  a map $\Pic X ^{\times (n-1)}\to A_0(X)$ defined by 
$$(L_1,..., L_{n-1})\to  
c_1(L_1)\cap ...\cap c_1(L_{n-1}) \cap [D]$$
(see \cite[Section 2.5]{Fu}). This map is also symmetric and multilinear (see \cite[Proposition 2.5]{Fu}).

To compare the above maps we can use the map $\psi: K_0(X)\to A_0(X)$ given by sending $\cF$ to $\sum _{x\in X(k)} l_x(\cF)\, [x]$, where $l_x (\cF)$
is the length of $\cF_x$ as an $\cO_{X.x}$-module (cf. \cite[Example 18.3.11]{Fu}, where an analogous map is defined also for cycles of higher dimension but it goes into the Chow group with rational coefficients). This map is an isomorphism with the inverse
$\varphi: A_0(X)\to K_0(X)$ given by sending $[x]$ to $\cO_ x$.

\medskip

The following lemma, generalizing the Riemann--Roch theorem on curves, compares the above maps.

\begin{Lemma}
For any Weil divisor $D$ and any line bundles $L_1$, ..., $L_{n-1}$ we have
$$c_1(L_1)\cap ...\cap c_1(L_{n-1}) \cap [D] = \psi( c_1(L_1)...c_1(L_{n-1}) \cdot [\cO_D]).$$
\end{Lemma}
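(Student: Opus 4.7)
My plan is to reduce both sides to a single transverse-intersection computation. Both sides are $\ZZ$-multilinear in $(D,L_1,\dots,L_{n-1})$: the right-hand side by Lemma \ref{intersection-with-Weil-div}, and the left-hand side by \cite[Proposition 2.5]{Fu}. Consequently I may assume $D$ is a prime Weil divisor, and because $X$ is projective, each $L_j$ can be written as a difference of two very ample line bundles, so I may further assume every $L_j$ is very ample.

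Next I would replace the virtual K-theory class $[\cO_D]=[\cO_X]-[\cO_X(-D)]$ with the honest class $[i_*\cO_D]$, where $i:D\hookrightarrow X$ is the inclusion. On the regular locus $U=X_{\mathrm{reg}}$, the divisor $D\cap U$ is Cartier, so the short exact sequence $0\to \cO_U(-D)\to \cO_U\to \cO_{D\cap U}\to 0$ gives $[\cO_D]|_U=[i_*\cO_D]|_U$. Using the right-exact sequence $K(X\setminus U)\to K(X)\to K(U)\to 0$ together with $\dim(X\setminus U)\le n-2$, this shows $[\cO_D]-[i_*\cO_D]\in K_{n-2}(X)$. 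By Lemma \ref{Kollar}(1), applying $n-1$ operators $c_1(L_j)$ annihilates $K_{n-2}(X)$, so the two classes produce the same element in $K_0(X)$.

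Now I would apply Bertini's theorem \ref{Bertini} iteratively to pick general $H_j\in |L_j|$ so that each successive scheme-theoretic intersection
$$Y_j:=D\cap H_1\cap\dots\cap H_j$$
is integral for $0\le j\le n-2$ and $Y_{n-1}$ is a reduced zero-dimensional scheme. For each $j$ the restriction $L_j|_{Y_{j-1}}$ has a section cutting out the effective Cartier divisor $Y_j\subset Y_{j-1}$, so $L_j|_{Y_{j-1}}\cong \cO_{Y_{j-1}}(Y_j)$. Combining Lemma \ref{Kollar}(4) with the projection formula $c_1(L)\cdot i_*\cF=i_*(c_1(i^*L)\cdot\cF)$ (an immediate consequence of $L^{-1}\otimes i_*\cF=i_*(i^*L^{-1}\otimes\cF)$), iterating gives
$$c_1(L_1)\cdots c_1(L_{n-1})\cdot [i_*\cO_D]=[i_*\cO_{Y_{n-1}}]\in K_0(X).$$
Since $Y_{n-1}$ is reduced of dimension zero, $\psi$ of the right-hand side equals $\sum_{x\in Y_{n-1}}[x]$. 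On the Chow-theoretic side, the same transverse choice of $H_j$ shows by standard intersection theory that $c_1(L_1)\cap\dots\cap c_1(L_{n-1})\cap [D]=\sum_{x\in Y_{n-1}}[x]$ as well, which completes the proof.

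The main technical point I expect to fuss over is the iterated Bertini: ensuring that a general member of $|L_j|$ both avoids the (lower-dimensional) singular locus of $Y_{j-1}$ and remains distinct from $Y_{j-1}$, so that integrality propagates until the final zero-dimensional step is genuinely a reduced set of points. The compatibility between $c_1(L)\cdot$ and closed immersions is the other unspoken ingredient, but it is immediate from the usual projection formula.
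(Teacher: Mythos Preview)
Your proof is correct and follows essentially the same route as the paper: reduce to very ample $L_j$ by multilinearity, then cut by general members of $|L_j|$ and invoke Lemma \ref{Kollar}(4) to identify both sides with the class of a reduced zero-dimensional scheme. The paper's version is terser---it does not reduce to $D$ prime and instead cuts $X$ down to a smooth curve $C$ before intersecting with $D$---and incidentally, for a prime divisor $D$ on a normal variety one already has $\cO_X(-D)=\cI_D$, so your step replacing $[\cO_D]$ by $[i_*\cO_D]$ is in fact an equality rather than merely an equality modulo $K_{n-2}(X)$.
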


\begin{proof} By the above we know that both sides of our equality are $\ZZ$-linear in $L_i$.
But any line bundle $L$ can be written as $A\otimes B^{-1}$ for some very ample line bundles $A$ and $B$. So it is sufficient to prove the above equality assuming that all $L_i$ are very ample.

For general divisors $H_i\in |L_i|$ the complete intersection $C=H_1\cap ...\cap H_{n-1}$ is a smooth curve
and $D\cap C$ is a $0$-cycle representing $c_1(L_1)\cap ...\cap c_1(L_{n-1}) \cap [D]$.
But by  Lemma \ref{Kollar}, (4) we have equality  $c_1(L_1)...c_1(L_{n-1})[\cO_D]=\cO _{D\cap C}$ in $K(X)$, so the required assertion is clear.
\end{proof}

Note that the above defined maps descend to the intersection product $A^1(X)\times (\Pic X )^{\times (n-1)}\to A_0(X)$ given by
$(D, L_1,..., L_{n-1})\to  c_1(L_1)\cap ...\cap c_1(L_{n-1}) \cap [D]$. 
Since $\chi (X, \cdot)= \int _X\circ \psi$,  the above lemma  shows that this descends to an intersection product $A^1(X)\times N^1(X)^{\times (n-1)}\to \ZZ$ given by
$$D.L_1...L_{n-1}:= \chi (X, c_1(L_1)...c_1(L_{n-1}) \cdot [\cO_D])= \int_X c_1(L_1)\cap ...\cap c_1(L_{n-1}) \cap [D].$$

\subsection{Intersection of two Weil divisors with Cartier divisors}\label{intersecting-2-Weil}

In this subsection we define intersection number for two Weil divisors and a collection of Cartier divisors.
We assume that $n\ge 2$.

\begin{Proposition}\label{limit-square-of-divisor}
Let $D$ be a Weil divisor and let $L_1,..., L_{n-2}$ be line bundles on $X$.
Then 
the sequence $$\left(\frac {  \chi (X, c_1(L_1)...c_1(L_{n-2}) \cdot [\cO_{mD} ]) }{m^2} \right)_{m\in \NN}$$
 is convergent and its limit is a rational number. 
 \end{Proposition}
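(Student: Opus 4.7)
The plan is to pass to a smooth model $f:\tilde X\to X$ --- a resolution of singularities in characteristic zero, or a generically finite alteration of degree $d$ in positive characteristic via de Jong --- and then apply Hirzebruch--Riemann--Roch on $\tilde X$, where the divisorial sheaf $\cO_X(-mD)$ lifts to an honest line bundle.

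Set $\tilde\cF_m:=f^{[*]}\cO_X(-mD)$ and $\tilde\cL:=f^{[*]}\cO_X(-D)$, both line bundles on $\tilde X$. Since they agree on the preimage of the regular locus $X_{\mathrm{reg}}$, one has $\tilde\cF_m\simeq \tilde\cL^{\otimes m}\otimes\cO_{\tilde X}(A_m)$ for some $\ZZ$-divisor $A_m$ supported on the exceptional locus of $f$. The projection formula combined with the Leray spectral sequence gives, for every line bundle $M$ on $X$,
\[
\chi(\tilde X, f^{*}M\otimes\tilde\cF_m) = \chi(X, M\otimes\cO_X(-mD)) + \sum_{i>0}(-1)^{i}\chi(X, M\otimes R^{i}f_{*}\tilde\cF_m),
\]
using $f_{*}\tilde\cF_m\cong\cO_X(-mD)$, which holds since both sides are reflexive and agree on $X_{\mathrm{reg}}$. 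Expanding $c_1(L_i)=1-L_i^{-1}\otimes$ and $[\cO_{mD}]=\cO_X-\cO_X(-mD)$, summing alternately over $I\subset\{1,\ldots,n-2\}$ with $M=\bigotimes_{i\in I}L_i^{-1}$, and invoking Hirzebruch--Riemann--Roch on the smooth $\tilde X$ rewrites the main term as
\[
\int_{\tilde X}c_1(f^{*}L_1)\cdots c_1(f^{*}L_{n-2})\,\ch(\tilde\cF_m)\,\operatorname{td}(\tilde X).
\]

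Each $R^{i}f_{*}\tilde\cF_m$ for $i>0$ is supported on the singular locus of $X$, of dimension $\leq n-2$; a Snapper-type bound then shows that the corresponding alternating sum over $I$ grows at most linearly in $m$, and hence contributes nothing after dividing by $m^2$. The key remaining point is to show that the divisor $A_m$ has multiplicities of the form $\mathrm{mult}_{E_j}(A_m)=\lfloor m b_j\rfloor$ (or more generally $mb_j+O(1)$) along each exceptional component $E_j$, for certain rational numbers $b_j$ --- essentially the higher-dimensional analogue of Mumford's $\QQ$-Cartier pullback of the Weil divisor $D$. Granted this, $\ch(\tilde\cF_m)=\exp(m\,c_1(\tilde\cL)+c_1(\cO_{\tilde X}(A_m)))$ expands into a polynomial in $m$ of bounded degree in cohomology, and the Riemann--Roch integrand becomes polynomial in $m$ of degree exactly $2$, modulo an $O(m)$-error. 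Division by $m^2$ then produces a convergent sequence, with limit $\tfrac{1}{2}\int_{\tilde X}(c_1(\tilde\cL)+c_1(\cO_{\tilde X}(B)))^{2}\cdot c_1(f^{*}L_1)\cdots c_1(f^{*}L_{n-2})$, where $B:=\sum_j b_j E_j$; this is a rational number (divided by $d$ in the alteration case).

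The main obstacle is establishing the linear-with-rational-slopes structure of $A_m$. At each generic point of an exceptional divisor $E_j$ one localises and reduces to a discrete-valuation-ring computation comparing $(f^{*}\cO_X(-D))^{\otimes m}$ with its reflexive hull $\tilde\cF_m$: the discrepancy along $E_j$ should jump by a fixed amount each time $m$ crosses an integer multiple of some fixed denominator, producing the rational slope $b_j$ and a uniform bound on the residual term. In positive characteristic the extra complication is that $f$ is only generically finite of degree $d$, so one must verify that the resulting limit on $\tilde X$ descends correctly to $X$ after division by $d$.
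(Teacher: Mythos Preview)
Your approach --- lift to a resolution or alteration $f:\tilde X\to X$ of the whole $n$-fold and invoke Hirzebruch--Riemann--Roch there --- is genuinely different from the paper's, and the step you single out as ``the main obstacle'' really is one; it is not dispatched by your closing DVR sketch. The quantity $\operatorname{mult}_{E_j}(A_m)$ is governed by $\operatorname{ord}_{E_j}(f^\sharp(mD))$ in the sense of \cite{dFH}, and for an exceptional prime $E_j$ whose center in $X$ has codimension $\ge 3$ there is no general reason for the limiting slope $b_j=\lim_m \operatorname{ord}_{E_j}(f^\sharp(mD))/m$ to be rational, nor for the remainder to be $O(1)$: these $b_j$ are precisely the coefficients of the de~Fernex--Hacon pullback, which is only an $\RR$-divisor in general. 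The computation can still be rescued, because by the projection formula any term involving such an $E_j$ pushes forward to an $(n-2)$-cycle supported on $f(E_j)$, hence vanishes when $\dim f(E_j)\le n-3$; only the codimension-$2$ centers survive, and for those the surface theory (Proposition~\ref{c_1-full-sheaves} and Subsection~\ref{deFernex-Hacon}) does give rational slopes with bounded remainder. But you have to make this split and this reduction explicit --- this is essentially Enokizono's construction. Similarly, the bound on $\chi(X,c_1(L_1)\cdots c_1(L_{n-2})\cdot R^if_*\tilde\cF_m)$ is not ``a Snapper-type bound'': Snapper controls powers of a fixed line bundle, whereas here the sheaf itself varies with $m$. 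The correct justification again passes through the surface-local estimate of Proposition~\ref{c_1-full-sheaves}(2).

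The paper sidesteps all of this by never resolving $X$. It first reduces to the case when every $L_i$ is very ample (via Lemma~\ref{Kollar}(3) and the resulting additivity), then for a general complete intersection surface $S\in |L_1|\cap\cdots\cap|L_{n-2}|$, normal by Bertini, shows directly that $c_1(L_1)\cdots c_1(L_{n-2})\cdot\cO_X(mD)=\cO_S(mD_S)$ in $K(X)$. The limit is then computed on $S$ using the Riemann--Roch theorem for rank-$1$ reflexive sheaves on a normal surface (Theorems~\ref{bound-on-a(x,E)} and~\ref{RR-theorem-easy}), yielding $\tfrac12 D_S^2$ with $D_S^2$ Mumford's rational self-intersection. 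Since resolution of surfaces is available in every characteristic, no alteration is needed.
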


\begin{proof}
 By Lemma \ref{Kollar} (3) we have
\begin{align*}
&{  \chi (X, c_1(L_1\otimes M_1)c_1(L_2)...c_1(L_{n-2}) \cdot [\cO_{mD}] ) }
={  \chi (X, c_1(L_1)...c_1(L_{n-2}) \cdot [\cO_{mD}] ) }\\
&+ {  \chi (X, c_1(M_1)c_1(L_2)...c_1(L_{n-2}) \cdot [\cO_{mD}] ) }- 
 {  \chi (X, c_1(M_1)c_1(L_1)...c_1(L_{n-2}) \cdot [\cO_{mD}] ) }.
\end{align*}
Since
\begin{align*}
\lim _{m\to \infty}\frac{\chi (X,  c_1(M_1)c_1(L_1)...c_1(L_{n-2}) \cdot [\cO_{mD}])}{m^2}
=\lim _{m\to \infty}\frac{m D.  c_1(M_1)c_1(L_1)...c_1(L_{n-2} )}{m^2}=0,
\end{align*}
we see that if  $\lim _{m\to \infty }\frac {  \chi (X, c_1(L_1)...c_1(L_{n-2}) \cdot [\cO_{mD}] ) }{m^2} $ and 
$\lim _{m\to \infty }\frac {  \chi (X, c_1(M_1)c_1(L_2)...c_1(L_{n-2}) \cdot [\cO_{mD}] ) }{m^2}$
exist then $$\lim _{m\to \infty }\frac {  \chi (X, c_1(L_1\otimes M_1)c_1(L_2)...c_1(L_{n-2}) \cdot [\cO_{mD}] ) }{m^2}$$
exists and it is equal to the sum of these limits.

Since any line bundle $L$ can be written as $A\otimes B^{-1}$ for some very ample line bundles $A$ and $B$ and the formula in the sequence is symmetric in $(L_1, ..., L_{n-2})$, it suffices to prove the existence of the (rational) limit assuming that all line bundles $L_i$ are very ample. Moreover, since 
$$\lim _{m\to \infty } \frac {  \chi (X, c_1(L_1)...c_1(L_{n-2}) \cdot [\cO_{mD} ]) }{m^2}=
-\lim _{m\to \infty }\frac {  \chi (X, c_1(L_1)...c_1(L_{n-2}) \cdot \cO_X (-mD)) }{m^2}
$$
it is enough to show the existence of the latter limit.

Let $K$ be any uncountable algebraically closed field containing $k$ and let $X_K\to X$ be the base change. Since the
Euler characteristic does not change under field extesion, it is sufficient to prove convergence of the sequence in question after base change to $K$. Therefore, in the following, we may assume that the base field $k$ is uncountable. 

Let us write $D= \sum d_i D_i$ as an integral combination of prime Weil divisors $D_i$. By Theorem \ref{Bertini} for a general sequence $H_i\in  |L_i| $, $i=1,...,n-2,$  each intersection  $X_j:=\bigcap _{i\le j} H_i $ is irreducible and normal,  and all $D_i\cap X_j$ have codimension $1$ in $X_j$. So $D_{X_j}=\sum d_i (D_i\cap X_j)$ is a well defined Weil divisor on $X_j$. Let us fix $m\in \ZZ$. Then for a general sequence as above and any $1\le j\le n-2$,
the restriction of  $\cO_{X_{j-1}}(mD_{X_{j-1}})$ to $X_j$ is reflexive (see  \cite[Corollary 1.1.14]{HL}) and hence isomorphic to $\cO_{X_{j}}(mD_{X_{j}})$. 
In that case we have short exact sequences
$$0\to  \cO_{X_{j-1}}(mD) \otimes L_{j}^{-1}\to \cO_{X_{j-1}}(mD_{X_{j-1}}) \to \cO_{X_{j}}(mD_{X_{j}})\to 0.$$ 
These sequences show that $\cO_{X_{j}}(mD_{X_{j}}) =c_1(L_j)\cdot \cO_{X_{j-1}}(mD_{X_{j-1}})$
in $K(X)$. So if we set $S:=X_{n-2}$  then  $\cO_{S}(mD_{S}) =c_1(L_1)...c_1(L_{n-2}) \cdot \cO_X({mD}) $ in $K(X)$.
This equality holds for all $m\in \ZZ$ if the sequence $H_1,...,H_{n-2}$ is very general. 
Now by the Riemann--Roch theorem  on $S$ (see Theorems \ref{RR-theorem-easy} and \ref{bound-on-a(x,E)}) we get
$$
\lim _{m\to \infty }\frac {  \chi (X, c_1(L_1)...c_1(L_{n-2}) \cdot \cO_X (-mD)) }{m^2}
=\lim _{m\to \infty}\frac{\chi ({S},  \cO _{S} (-mD_S))}{m^2}=  \frac{1}{2}D_{S}^2,
$$
where $D_{S}^2$ is the self intersection of $D_S$ in the sense of Mumford's intersection pairing on $S$.
\end{proof}

\medskip

For any Weil divisor $D$ and any line bundles $L_1,...,L_{n-2}$ we set
$$D^2.L_1...L_{n-2}:= 2 \lim _{m\to \infty} \frac{  \chi (X, c_1(L_1)...c_1(L_{n-2}) \cdot [\cO_{mD}]) }{m^2}. $$

\begin{Theorem}\label{properties-of-intersection-product}
Let us consider the map $Z^1(X)\times Z^1(X)\times \Pic X^{\times (n-2)}\to \QQ$ defined by sending
$(D_1,D_2,L_1,... ,L_{n-2})$ to
$$ D_1.D_2.L_1...L_{n-2}:= \frac{1}{2}\left((D_1+D_2)^2.L_1...L_{n-2}   -D_1^2.L_1...L_{n-2}-D_2^2.L_1...L_{n-2}\right).$$
This map satisfies the following conditions:
\begin{enumerate}
\item It is $\ZZ$-linear in each variable. 
\item It is symmetric in $D_1$ and $D_2$.
\item It is symmetric in $L_1, ... ,L_{n-2}$.
\item If $D_2$ is a Cartier divisor then  $ D_1.D_2.L_1...L_{n-2}= D_1.\cO_X(D_2) L_1...L_{n-2}$.
\item If we fix $D_1, D_2\in  Z^1 (X)$ and assume that  $n>2$ and $L_1$ is very ample
then  for a very general hypersurface  $H_1\in |L_1|$ we have 
$$D_1.D_2.L_1...L_{n-2} =(D_1)_{H_1}.(D_2)_{H_1}.L_2|_{H_1}...L_{n-2}|_{H_1}.$$
\end{enumerate}
\end{Theorem}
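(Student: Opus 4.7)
The plan is to establish property (5) first in the very ample case, use it to reduce properties (1)--(4) to Mumford's intersection on a normal surface, and then extend from the very ample case to arbitrary $L_i$ by multilinearity in the Picard variables.

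First I would verify (5) for $D^2.L_1\cdots L_{n-2}$. For very ample $L_1$ and a general $H_1 \in |L_1|$, Bertini's theorem (Theorem \ref{Bertini}) ensures $H_1$ is normal and every prime component of $D$ meets $H_1$ properly, so $D_{H_1}$ is a well-defined Weil divisor on $H_1$. The short exact sequence
$$0 \to \cO_X(mD)\otimes L_1^{-1} \to \cO_X(mD) \to \cO_{H_1}(mD_{H_1}) \to 0$$
from the proof of Proposition \ref{limit-square-of-divisor} yields $c_1(L_1)\cdot [\cO_{mD}] = [\cO_{mD_{H_1}}]$ in $K(X)$, hence $\chi(X, c_1(L_1)\cdots c_1(L_{n-2})\cdot [\cO_{mD}]) = \chi(H_1, c_1(L_2|_{H_1})\cdots c_1(L_{n-2}|_{H_1})\cdot [\cO_{mD_{H_1}}])$. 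Dividing by $m^2$ and taking the limit gives the restriction formula for $D^2$; by the polarization definition, (5) follows for $D_1.D_2.L_1\cdots L_{n-2}$ with $L_1$ very ample.

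Iterating this restriction reduces the problem to a general complete intersection normal surface $S = H_1\cap\cdots\cap H_{n-2}$, on which $D_1.D_2.L_1\cdots L_{n-2} = (D_1)_S.(D_2)_S$ is Mumford's $\QQ$-valued pairing. Properties (1) and (2) then follow from the $\ZZ$-bilinearity and symmetry of Mumford's pairing; (3) is immediate from Lemma \ref{Kollar}(2) combined with the polarization identity; and (4) follows since, for Cartier $D_2$, $(D_2)_S$ is Cartier, Mumford's pairing agrees with the usual intersection $(D_1)_S.\cO_S((D_2)_S)$, and a further Bertini cut-down identifies this with $D_1.\cO_X(D_2).L_1\cdots L_{n-2}$ of Lemma \ref{intersection-with-Weil-div}. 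To remove the very ampleness assumption on the $L_i$, I would invoke multilinearity in $L_i$: as in the proof of Proposition \ref{limit-square-of-divisor}, Kollar's identity $c_1(L\otimes M) = c_1(L) + c_1(M) - c_1(L)c_1(M)$ produces a cross term whose $\chi$-contribution, after applying the remaining $c_1(L_j)$'s, equals the Weil--Cartier number $(mD).L.M.L_2\cdots L_{n-2}$ which is linear in $m$ and vanishes after division by $m^2$. Polarization transfers this multilinearity to $D_1.D_2.L_1\cdots L_{n-2}$, and writing every line bundle as a difference of two very ample ones extends (1)--(5) from the very ample case to the general case.

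The main technical obstacle is that the exact sequence displayed above should really have the \emph{reflexive} sheaf $\cO_{H_1}(mD_{H_1})$ as its cokernel, and not merely a torsion-free sheaf that agrees with it in codimension one on $H_1$. Addressing this requires a Bertini-type choice of $H_1$ controlling the non-locally-free locus of $\cO_X(mD)$, or else bounding the defect class in $K_0(S)$ and verifying that its $\chi$-contribution after further intersection with $c_1(L_2),\ldots,c_1(L_{n-2})$ is only $O(m)$, and hence negligible in the limit defining $D^2.L_1\cdots L_{n-2}$.
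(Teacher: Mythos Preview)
Your proposal is correct and follows essentially the same route as the paper: derive multilinearity in the $L_i$ from the argument in Proposition~\ref{limit-square-of-divisor}, reduce to very ample $L_i$, restrict to a general complete-intersection normal surface $S$ where the pairing becomes Mumford's $(D_1)_S.(D_2)_S$, and deduce (1) and (4) from the surface case, with (2) and (3) immediate. The only difference is the order in which you package the steps.

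The technical obstacle you flag is genuine, and in fact the paper's own proof of Proposition~\ref{limit-square-of-divisor} asserts the short exact sequences with reflexive quotient $\cO_{X_j}(mD_{X_j})$ without comment; the a~priori cokernel is only the torsion-free restriction $\cO_{X_{j-1}}(mD_{X_{j-1}})|_{X_j}$. The cleanest fix is the device the paper itself uses later in Proposition~\ref{existence-of-limit}: since the Euler characteristics $\chi(X,c_1(L_1)\cdots c_1(L_{n-2})\cdot[\cO_{mD}])$ are unchanged under base extension, one may pass to an uncountable extension of $k$ and then choose the $H_i$ \emph{very} general, so that by \cite[Corollary~1.1.14]{HL} each successive restriction is reflexive for every $m\ge 0$ simultaneously. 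With this adjustment your argument goes through verbatim; your alternative of bounding the defect class is workable but less direct, and would require an independent bound on the length of the reflexivization cokernel that the paper does not supply.
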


\begin{proof}
The fact that the map is $\ZZ$-linear in $L_i$ follows from the first part of the proof of Proposition
\ref{limit-square-of-divisor}. 
If we fix $D_1, D_2\in  Z^1 (X)$ and assume that  all $L_1,...,L_{n-2}$ are very ample
then  for a very general complete intersection surface  $S\in |L_1|\cap ...\cap |L_{n-2}|$ we have 
$$D_1.D_2.L_1...L_{n-2} =(D_1)_S.(D_2)_S.$$
If $D_1=D_2$ this follows from the proof of Proposition \ref{limit-square-of-divisor}
and the the general case can be reduced to this using the equality
$$ (D_1)_S. (D_2)_S= \frac{1}{2}\left((D_1+D_2)_S^2   -(D_1)_S^2-(D_2^2)_S\right).$$
Now (5) follows from the above mentioned linearity, which allows us to assume  that also $L_2,...,L_{n-2}$ are very ample.
(2) is clear and (3) follows from Lemma \ref{Kollar}, (2). To prove that the map is $\ZZ$-linear in $D_1$ and $D_2$ we can assume that $L_1,...,L_{n-2}$ are very ample. In this case the assertion follows from (5). In the same way we can reduce
(4) to the normal surface case, where the assertion is clear.
\end{proof}

The above theorem implies that we have a well defined induced intersection form
$$A^1(X)\times A^1(X)\times N^1(X)^{\times (n-2)}\to \QQ ,$$
generalizing Mumford's intersection pairing on surfaces. Note that our approach reconstructs Mumford's intersection pairing on normal surfaces without using any resolution of singularities.

\medskip

\medskip

Now let fix very ample line bundles $L_1,...,L_{n-2}$ and consider 
 a $\QQ$-valued intersection pairing $\langle \cdot, \cdot \rangle : A^1(X)\times A^1(X)\to \QQ$ by setting
$$\langle D_1, D_2\rangle := D_1.D_2.L_1...L_{n-2} .$$
Let us write $N_L(X)$ for the quotient of $A^1 (X)$ modulo the radical of this intersection pairing. 
Then we have an induced non-degenerate intersection pairing
$$\langle \cdot, \cdot \rangle : N_L(X)\times N_L(X)\to \QQ.$$
If $n=2$ then we write $N(X)$ instead of $N_L(X)$.

\begin{Lemma}\label{finite-generation}
$N_L(X)$ is a free $\ZZ$-module of finite rank. In particular, there exists a positive integer $N$ such that the intersection pairing $\langle \cdot , \cdot \rangle$ takes values in $\frac{1}{N}\ZZ\subset \QQ$.  If  $\rk N_L (X)=s$ then the intersection pairing $\langle \cdot , \cdot \rangle$ has signature $(1, s-1)$.
\end{Lemma} 

\begin{proof}
By \cite[Lemma 1.18]{La4} the intersection pairing induces a $\QQ$-valued intersection pairing $\langle \cdot, \cdot \rangle : B^1(X)\times B^1(X)\to \QQ$, where $B^1(X)$ is the group of algebraic equivalence classes of Weil divisors on $X$. Since $N_L(X)$  is the quotient of $B^1 (X)$ modulo the radical of this intersection pairing, $N_L(X)$ is a free $\ZZ$-module of finite rank by the theorem of the base. Since the product is $\ZZ$-linear in both variables, its image in $\QQ$ is a $\ZZ$-submodule. This implies  existence of  $N$  in the lemma’s statement. 
Let $D$ be a Weil divisor on $X$ and $H$ an ample line bundle on $X$. By Theorem \ref{properties-of-intersection-product}, (5) and the Hodge index theorem on normal surfaces we have 
\[D^2.L_1...L_{n-2} \cdot H^2.L_1...L_{n-2}  \le (D.H.L_1...L_{n-2})^2.
\]
This shows that the proof of \cite[Lemma 1.19]{La4} works. This lemma implies the last assertion.  
\end{proof}

\subsection{Comparison with the de Fernex--Hacon pullback and Enokizono's intersection numbers}\label{deFernex-Hacon}

In this subsection we compare the obtained intersection theory with the one obtained by intersecting de Fernex--Hacon pullbacks of Weil divisors. This subsection is not needed in the following but we will need to use some results and notation introduced in further part of the paper.

Let us recall that if $X$ is a normal variety defined over some algebraically closed field $k$ and $f: Y\to X$ is a birational morphism from a normal variety then for any Weil divisor $D$ on $X$ in \cite[Definition 2.5]{dFH} the authors define $f^{\sharp}D$ so that $\cO_X (-f^{\sharp} D)=f^{[*]}\cO_X(-D)$.
If $f: \tilde X\to X$ is a resolution of singularities and $X$ is a proper normal surface then in
the notation of Subsection \ref{pullback} we have
$$f^*(-mD)=- f^{\sharp} (mD)-\sum _{x\in f(E)} c_1 (f_x, f^{[*]} \cO_X (-mD)), $$ 
where on the left hand side we have Mumford's pullback.
But by Theorem \ref{c_1-full-sheaves} there are only finitely many possibilities for $c_1 (f_x, f^{[*]} \cO_X (-mD))$,
so by $\ZZ$-linearity of Mumford's pullback we have
$$f^*(D)=\lim _{m\to \infty }\frac{f^{\sharp} (mD)}{m}.$$
This shows that, in the surface case, the de Fernex--Hacon pullback of Weil divisors defined in \cite[Definition 2.9]{dFH}
coincides with Mumford's pullback. In the characteristic zero case this fact was known (see \cite[Section 2]{BdFF}).

In higher dimensions (even in characteristic $0$)  the de Fernex--Hacon pullback  satisfies $f^*(-D)\ne -f^*D$, so it is  not useful for defining intersection form on $X$. If one wants to get pullback of Weil divisors similar to Mumford's rational pullback, one needs to consider only very special morphisms (see \cite{Sc}). 
One can also consider a partial (Mumford's) pullback 
$$f^*_MD:=\sum _{{\rm codim}\, f(E)\le 2}a(E)\cdot E,$$
where $a(E)$ is the coefficient of $E$ in the de Fernex--Hacon pullback   $f^*D$, and the sum is taken over all
prime divisors $E$ on $\tilde X$ with centers of codimension $1$ or $2$.  Note that even if $X$ is smooth, the above pullback does not coincide with the usual pullback of Cartier divisors. However, this partial pullback can be used to define intersection of two Weil divisors with Cartier divisors.

In  \cite[Appendix]{En}  Enokizono used this pullback to define intersection numbers as follows.
Using de Jong's results \cite{dJ} one can find an alteration $f: \tilde Y\to X$ from a smooth projective variety $\tilde Y$. Let $\tilde Y\stackrel{g}{\longrightarrow}Y\stackrel{\pi}{\longrightarrow}X$ denote the Stein factorization of $f$, where $Y$ is normal, $g$ is birational and $\pi$ is finite. Assume that the dimension $n$ of $X$ is at least $2$.
Then for any Weil divisors $D_1$, $D_2$ and line bundles $L_1,..., L_{n-2}$
one can define the intersection number
$$(D_1.D_2.L_1...L_{n-2})_E:= \frac{1}{\deg \pi} \int_{\tilde Y}g_M^*(\pi^*D_1)g_M^*(\pi^*D_2)f^*L_1...f^*L_{n-2}.$$ 
This number is independent of the choice of alterations (see \cite[Lemma A.5]{En}) so in particular it satisfies the following lemma.

\begin{Lemma}\label{c_2-direct-sum-any-dim-Enokizono} 
	Let $D_1, D_2$ be Weil divisors and $L_1,..., L_{n-2}$ line bundles on $X$. 
	Let  $ \pi: Y\to X$ be a finite morphism from a normal  projective variety $Y$. Then we have
	$$(\pi^*D_1.\pi^*D_2. \pi^*L_1...\pi^*L_{n-2})_E= \deg \pi \cdot (D_1.D_2.L_1...L_{n-2})_E.$$
\end{Lemma}

The following lemma follows easily from the numerical characterization of  Mumford's pullback on normal surfaces (the same as for resolution of singularities in Subsection \ref{Mumford's-intersection}).

\begin{Lemma}
Let  $f: \tilde Y\to X$ be an alteration between normal projective varieties with Stein factorization 
\[
\tilde{Y} \xrightarrow{g} Y \xrightarrow{\pi} X.
\]
Let $L$ be a very ample line bundle on $X$ and let $D$ be a Weil divisor on $X$. For a general hyperplane $H\in |L|$ let 
\[
\nu: B \to \pi^{-1}(H) \quad \text{and} \quad \tilde{\nu}: \tilde{B} \to f^{-1}(B)
\]
be the normalizations. Let
\[
\tilde{g}: \tilde{B} \to B \quad \text{and} \quad \tilde{\pi}: B \to H
\]
denote the maps induced by \( g \) and \( \pi \), respectively. Then  we have 
\[\tilde \nu^* (g_M^*(\pi^*D))= \tilde g_M^*(\pi^*(D|_H)).
\]
\end{Lemma}

Note that we need to take the normalizations of both $f^{-1}(B)$ and $\pi^{-1}(H)$ as
one can construct examples of finite morphisms $\pi: Y\to X$ for which the preimage $\pi^{-1}(H)$ is non-normal for every member $H$ of a very ample linear system on $X$. 
The above lemma immediately implies the following version of \cite[Theorem A.1, (iv)]{En} (whose proof was skipped by the author).

\begin{Corollary}
Let $D_1, D_2$ be Weil divisors and $L_1,..., L_{n-2}$ line bundles on $X$. Assume that $n>2$ and $L_1$ is very ample. Then for a general hypersurface  $H_1\in |L_1|$ we have 
$$(D_1.D_2.L_1...L_{n-2})_E =((D_1)_{H_1}.(D_2)_{H_1}.L_2|_{H_1}...L_{n-2}|_{H_1})_E.$$
\end{Corollary}

Since both Enokizono's and our intersection numbers agree on normal surface and they do not change when taking a base change to larger algebraically closed field,
Theorem \ref{properties-of-intersection-product}, (5) and the above corollary imply the following.

\begin{Corollary}
Let $D_1, D_2$ be Weil divisors and $L_1,..., L_{n-2}$ line bundles on $X$. Then 
$$(D_1.D_2.L_1...L_{n-2})_E=D_1.D_2.L_1...L_{n-2}.$$
\end{Corollary}

\section{Local relative Chern classes for resolutions of normal surfaces}

In this section we revise the theory of local relative Chern classes  for resolutions of normal surfaces.
This theory was studied in \cite{Wa} in the rank $2$ case and in  \cite{La0} in general, but only in the complex analytic setup. Here we develop an algebraic approach in an arbitrary characteristic.  Our approach is along similar lines 
as in \cite{La0} but we give simplified and more detailed versions of several proofs. 

\medskip

Let $k$ be an algebraically closed field and let $A$ be an excellent normal $2$-dimensional Henselian local $k$-algebra. The developed theory seems to work in more general situations as in \cite{Li} but we will need it only for henselizations of local rings of algebraic surfaces defined over an algebraically closed field.

Let $X=\Spec A$ and let $x\in X$ be the closed point of $X$. 
Let  $f:\tilde X\to X$ be a desingularization of $X$, i.e., a proper birational morphism from a regular surface $\tilde X$. Here a surface is a reduced noetherian separated $k$-scheme of dimension $2$ but we do not assume that it is of finite type over $k$. Let $E$ be the exceptional locus of $f$ considered with the reduced scheme structure. 
One can also assume that $f$ is \emph{good}, i.e., $E$ is a simple normal crossing divisor, but this will not be used in the following. 

\subsection{Local relative Chern classes of vector bundles for resolutions of surfaces}\label{local-relative-RR} 

Let $\cF$ be a vector bundle on $\tilde X$. If $\{E_i\}$ denote the irreducible components of $E$ then 
 the intersection matrix $[E_i.E_j]$ of the exceptional divisor is negative definite (see, e.g., \cite[Lemma 14.1]{Li}).
So  there exists a unique $\QQ$-divisor $c_1(f, \cF )$ supported on $E$ such that for every irreducible component $E_{i}$ of $E$ we have 
$$c_1(f ,\cF ) . E_{i} =\deg \cF |_{E_{i}}.$$
We call $c_1(f, \cF )$ the \emph{first relative Chern class} of $\cF$ with respect to $f$.

Let $\Div _E(\tilde X)$ denote the group of divisors on $\tilde X$ that are supported on $E$.
Then $c_1(f, \cF )$ is an element of $\Div _E(\tilde X)\otimes \QQ$. Since the canonical 
map $\Div _E(\tilde X)\to \Pic \tilde X$ is injective, we can, without any loss of information, consider  $c_1(f, \cF )$ as an element of  $ \Pic \tilde X\otimes \QQ$.

\medskip

Let $\tilde \pi: \tilde Y\to \tilde X$ be a generically finite proper morphism from a regular surface $\tilde Y$.
Let us consider the Stein factorization of $f\circ \tilde \pi$  into a proper birational morphism $g:\tilde Y\to Y$ and a finite morphism $\pi:Y\to X$. Clearly, $Y\to X$ corresponds to a finite extension $A\to B$ with $B$ a normal domain. Since $A$ is Henselian, $B$ is local and also Henselian. Since $A$ is excellent, $B$ is stil excellent (of dimension $2$).
Possibly further blowing up $\tilde Y$ we can also assume that $g$ is good.

Let us fix a rank $r$  vector bundle  $\cF$ on $\tilde X$ and let us consider a filtration $\tilde \pi ^*\cF=\cF ^0\supset \cF^1 \supset ...\supset \cF ^r=0$ such that all quotients $\cL _i= \cF^i/{\cF^{i+1}}$ are line bundles. Note that such a filtration always exists, possibly after further blowing up $\tilde Y$.

Let us note that
$$(r-1) (\sum L_i)^2-2r \sum _{i<j} L_iL_j=\sum _{i<j} (L_i-L_j)^2\le 0,$$
where $L_i=c_1(g, \cL_i)$.
But $ (\sum L_i)^2= c_1(g, \tilde \pi ^*\cF)^2=\deg \tilde \pi  \cdot  c_1(f, \cF)^2$, so we have
$$\frac{\sum _{i<j} L_iL_j}{\deg \tilde \pi}\ge \frac{r-1}{2r}c_1(f, \cF)^2.$$
Therefore the following definition makes sense:

\begin{Definition}\label{definition-c_2-surfaces}
The \emph{second relative Chern class} $c_2(f, \cF)$ of $\cF$ with respect to $f$  is defined as the real number
$$c_2(f, \cF)= \inf \left(\frac{\sum _{i<j} L_iL_j}{\deg \tilde \pi }\right),$$
where the infimum is taken over all  generically finite proper morphisms $\tilde \pi: \tilde Y\to \tilde X$  from a regular surface and all filtrations  $\tilde \pi ^*\cF=\cF ^0\supset \cF^1 \supset ...\supset \cF ^r=0$, whose 
quotients $\cL _i= \cF^i/{\cF^{i+1}}$ are line bundles. 
\end{Definition}

In the rank $2$ case in the complex analytic setting this definition was introduced by  Wahl  in \cite{Wa}. His definition was generalized to arbitrary rank in \cite{La0} and studied there also in the complex analytic setting. 

The following proposition summarizes some basic properties of the relative second Chern class.

\begin{Proposition}\label{basic-properties-c_2}
\begin{enumerate}
\item For any line bundle $\cL$ on $\tilde X$ we have $c_2(f, \cF)=0$.
\item (relative Bogomolov's inequality) For any rank $r$ vector bundle $\cF$ on $\tilde X$ we have
$$\Delta (f, \cF):= 2r c_2 (f, \cF)- (r-1)c_1 (f, \cF)^2\ge 0.$$
\item For any rank $r$ vector bundle $\cF$ and any line bundle $\cL$ on $\tilde X$ we have
$$ \Delta (f, \cF \otimes \cL)= \Delta (f, \cF ).$$
\item If  $\tilde \pi: \tilde Y\to \tilde X$ is a generically finite proper morphism from a regular surface $\tilde Y$
and  $\tilde Y\xrightarrow{g} Y\xrightarrow{\pi}X$ is the Stein factorization of $f\circ \tilde \pi$ then
$$c_2(g, \tilde \pi ^*\cF) = \deg \pi \cdot c_2(f, \cF).$$
\end{enumerate}
\end{Proposition}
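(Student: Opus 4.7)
The plan is to prove the four claims by direct manipulations of the defining quantity $\sum_{i<j}L_iL_j/\deg\tilde\pi$, together with a pullback-compatibility identity for $c_1$. Item (1) is immediate: when $\cF$ has rank one, every filtration is trivial and the sum is empty, so taking $\tilde\pi=\id_{\tilde X}$ gives $c_2(f,\cL)=0$. Item (2) follows from the Schwarz-type inequality displayed just before Definition \ref{definition-c_2-surfaces}: rewriting it using $(\sum L_i)^2=\deg\tilde\pi\cdot c_1(f,\cF)^2$ gives $\sum_{i<j}L_iL_j/\deg\tilde\pi\geq \frac{r-1}{2r}c_1(f,\cF)^2$ for every test object, so taking the infimum yields $\Delta(f,\cF)\geq 0$.

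For item (3), I would observe that tensoring by $\tilde\pi^*\cL$ defines a bijection between filtrations of $\tilde\pi^*\cF$ and filtrations of $\tilde\pi^*(\cF\otimes\cL)$, under which the first relative Chern classes transform as $L_i\mapsto L_i+L$ with $L:=c_1(g,\tilde\pi^*\cL)$. Expanding
$$\sum_{i<j}(L_i+L)(L_j+L)=\sum_{i<j}L_iL_j+(r-1)L\sum_i L_i+\binom{r}{2}L^2$$
and using the pullback identities $L\cdot\sum_i L_i=\deg\tilde\pi\cdot c_1(f,\cL)c_1(f,\cF)$ and $L^2=\deg\tilde\pi\cdot c_1(f,\cL)^2$, together with $c_1(f,\cF\otimes\cL)^2=c_1(f,\cF)^2+2rc_1(f,\cF)c_1(f,\cL)+r^2c_1(f,\cL)^2$, a short computation shows that the filtration-value of $\Delta$ is invariant under the bijection. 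Since $\Delta(f,\cdot)$ is the infimum of that filtration-value, (3) follows.

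For item (4) I would prove two inequalities. One direction, $\deg\pi\cdot c_2(f,\cF)\leq c_2(g,\tilde\pi^*\cF)$, is immediate: any test object $(\sigma:\tilde Z\to\tilde Y,\text{filt})$ for $c_2(g,\tilde\pi^*\cF)$ composes with $\tilde\pi$ to give a test object $(\tilde\pi\circ\sigma,\text{same filt})$ for $c_2(f,\cF)$ whose Stein factorization shares the desingularization step with that of $g\circ\sigma$ (by uniqueness applied to $f\circ\tilde\pi\circ\sigma=\pi\circ g\circ\sigma$), so the $L_i$'s agree while the denominator gets multiplied by $\deg\pi$. For the reverse, given a test object $(\rho:\tilde W\to\tilde X,\{\cL_i\})$, let $\tilde V$ be a desingularization of the normalization of $\tilde W\times_{\tilde X}\tilde Y$, with projections $p:\tilde V\to\tilde W$ of degree $\deg\pi$ and $q:\tilde V\to\tilde Y$ of degree $\deg\rho$. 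Then $(q,p^*\{\cL_i\})$ is a test object for $c_2(g,\tilde\pi^*\cF)$, and the key identity
$$c_1(\bar g, p^*\cL_i)=p^*c_1(g_\rho,\cL_i),$$
where $g_\rho$ and $\bar g$ denote the birational parts of the Stein factorizations of $f\circ\rho$ and $g\circ q$ respectively, combined with the projection formula $p^*D_1\cdot p^*D_2=\deg\pi\cdot D_1\cdot D_2$, shows that the filtration-value on the $\tilde Y$-side is exactly $\deg\pi$ times the filtration-value on the $\tilde X$-side; taking infima yields the reverse inequality.

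The main obstacle is verifying the pullback-compatibility identity in (4). This reduces to checking the defining numerical condition for $c_1(\bar g,p^*\cL_i)$ against each $\bar g$-exceptional curve $F$: for $F$ mapping dominantly onto a component $E_\alpha$ of the $g_\rho$-exceptional divisor, the projection formula applied to the finite map $p|_F:F\to E_\alpha$ gives $\deg(p^*\cL_i|_F)=[k(F):k(E_\alpha)]\cdot\deg(\cL_i|_{E_\alpha})$, which matches $p^*c_1(g_\rho,\cL_i)\cdot F$; for $p$-exceptional $F$ both sides vanish because $p^*\cL_i$ is pulled back from $\tilde W$. The negative definiteness of the intersection form on the $\bar g$-exceptional divisor then forces equality, after which (1)--(4) reduce to routine projection-formula bookkeeping.
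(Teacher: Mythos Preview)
Your arguments for (1)--(3) are correct and simply spell out what the paper declares ``obvious from the definition''; in particular your computation for (3) is exactly the routine check the paper omits, and it rests (as does the identity $(\sum L_i)^2=\deg\tilde\pi\cdot c_1(f,\cF)^2$ the paper invokes without proof) on the pullback compatibility $c_1(g,\tilde\pi^*\cG)=\tilde\pi^*c_1(f,\cG)$ that you later verify carefully.

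For (4), your route and the paper's are close cousins but packaged differently. The paper argues abstractly that the category of test objects over $\tilde X$ is cofiltered: given $\tilde W\to\tilde X$ and $\tilde Y\to\tilde X$, one picks a finite normal extension $A\to D$ dominating both $A\to B$ and $A\to C$, resolves $\Spec D$, and thereby produces a single integral $\tilde T$ mapping to both $\tilde W$ and $\tilde Y$; pulling back any filtration from $\tilde W$ to $\tilde T$ and viewing it over $\tilde Y$ gives the reverse inequality. You instead construct $\tilde V$ as a resolution of the normalization of the fibre product $\tilde W\times_{\tilde X}\tilde Y$. The one point you gloss over is that this fibre product need not be irreducible (equivalently, $K(B)\otimes_{K(A)}K(C)$ is a product of fields, not a field), so $\tilde V$ may be a disjoint union of integral surfaces $\tilde V_j$, and with the paper's convention ``$B$ a normal domain'' the test objects should be integral. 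This is not a real obstruction: since $\rho\circ p_j=\tilde\pi\circ q_j$ on each component, multiplicativity of degrees forces $\deg p_j\cdot\deg\rho=\deg q_j\cdot\deg\pi$, so every component satisfies $\deg p_j/\deg q_j=\deg\pi/\deg\rho$ and your projection-formula computation goes through verbatim on any single $\tilde V_j$. Once you note this, your argument is complete and in fact more explicit than the paper's, since you write out the verification of $c_1(\bar g,p^*\cL_i)=p^*c_1(g_\rho,\cL_i)$ that the paper leaves entirely implicit. The paper's cofiltered phrasing buys a cleaner avoidance of the reducibility issue; your fibre-product phrasing buys concreteness and makes the underlying projection-formula mechanism visible.
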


\begin{proof}
Properties (1)--(3) are obvious from the definition.
To show (4) let us first remark that the category of finite extensions $A\to B$ with $B$ a normal domain, is filtered. 
Given two extensions $A\to B$ and $A\to C$  we can embed the quotient fields $K(B)$ and $K(C)$
into some fixed algebraic closure of $K(A)$. Then we can find a finite field extension $K(A)\subset L$ 
that contains both  $K(B)$ and $K( C)$. Then the normalization $D$ of $A$ in $L$ gives a finite normal 
extension $A\to D$ dominating both $A\to B$ and $A\to C$.

This implies that the category of  generically finite proper morphisms from a regular surface to $\tilde X$ is cofiltered. 
Indeed, if $\tilde \pi: \tilde Y\to \tilde X$ and  $\tilde \tau: \tilde Z\to \tilde X$  are generically finite proper morphisms from regular surfaces $\tilde Y$ and $\tilde Z$ then we consider the corresponding Stein factorizations $\tilde Y\xrightarrow{g} Y\xrightarrow{\pi}X$ and $\tilde Z\xrightarrow{h} Z\xrightarrow{\tau}X$. By the above we can find a finite morphism 
$T\to X$ from a normal surface $T$, that dominates both $\pi$ and $\tau$. Then we can find a good resolution of singularities $\tilde T\to T$ with generically finite proper morphisms to both $\tilde Y$ and $\tilde Z$ (it is sufficient to find a resolution dominating main irreducible components of both $\tilde Y\times _YT$ and $\tilde Z\times _ZT$).

The above fact allows us to pull-back to $\tilde T$ any filtration of  the pullback of $\cF$ to $\tilde Z$. This gives a filtration of the pull back of $\tilde \pi^*\cF$ implying  (4).
\end{proof}

\medskip

\begin{Remark}\label{easy-inequality-for-local-ch_2}
In the following we write $\ch_2 (f, \cF) $ for $ \frac{1}{2} c_1(f, \cF)^2-c_2 (f, \cF) $. Note that definition of $c_2 (f, \cdot)$
implies that for any two vector bundles $\cF_1, \cF_2$ on $\tilde X $ we have
$$\ch_2 (f,\cF _1\oplus \cF_2)\ge \ch_2 (f,\cF _1)+\ch_2 (f,\cF _2).$$
Later we prove that if  the base field $k$ has  positive characteristic then we have equality (see Corollary \ref{a-of-sum-char-p}).  
\end{Remark}

\medskip

\begin{Remark}\label{behaviour-on-tensors}
One of the main open problems related to local relative Chern classes is their behaviour under tensor operations (see \cite[Conjecture 8.1]{La0}). For example, if we knew that one can compute $c_2(f, \Sym ^m\cF)$ using $c_1(f, \cF)$ and $c_2(f, \cF)$ using the same formulas as follow from the splitting principle for usual Chern classes, then Conjecture \ref{Wahl's-conjecture} holds.
\end{Remark}

\subsection{Local relative Riemann--Roch theorem}

\begin{Definition}
For a vector bundle $\cF$ on $\tilde X$ we define a \emph{relative Euler characteristic} $\chi (f, \cF)$ by
$$\chi (f, \cF):=\dim H^0(X, (f_*\cF)^{**}/f_*\cF) +\dim H^0(X, R^1 f_*\cF).$$  
\end{Definition}

\begin{Definition}\label{addition-a}
For any rank $r$ vector bundle $ \cF$ on $\tilde X$ we set 
$$a (f,  \cF):=\chi (f, \cF)-r\, \chi(f,\cO_{\tilde X})+
\frac{1}{2}c_1(f,  \cF)(c_1(f,  \cF)-K_{\tilde X})-c_2 (f,  \cF).$$
\end{Definition}

The proof of the following proposition is essentially the same as that of \cite[Proposition 2.9]{La0}
so, since we cannot improve upon it, we skip it.

\begin{Proposition}\label{Wahl-independence}
Let $f': \tilde X'\to X$ be a desingularization of $X$.
Let $\cF \in \Vect(\tilde X)$ and $\cF' \in \Vect (\tilde X')$ be vector bundles such that
$f_{[*]}\cF$ and $f'_{[*]}\cF'$ are isomorphic.
Then we have $a (f, \cF)=a (f',  \cF ')$.
\end{Proposition}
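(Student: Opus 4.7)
The plan is to reduce the statement, via a common dominating desingularization, to two invariance claims: one for pullback by a birational morphism of smooth surfaces, and one for change of vector bundle with fixed reflexive pushforward on a fixed resolution.

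\emph{Reduction to a common dominating resolution.} The category of desingularizations of $X$ is filtered: given $f: \tilde X \to X$ and $f': \tilde X' \to X$, one takes any desingularization $h: \tilde Y \to X$ of the main component of the fiber product $\tilde X \times_X \tilde X'$, equipped with proper birational morphisms $g: \tilde Y \to \tilde X$ and $g': \tilde Y \to \tilde X'$ satisfying $h = f \circ g = f' \circ g'$. Since $g$ and $g'$ are birational morphisms between smooth surfaces, we have $Rg_* \cO_{\tilde Y} = \cO_{\tilde X}$, hence by the projection formula $Rg_*(g^* \cF) = \cF$ and $h_{[*]}(g^* \cF) \cong f_{[*]} \cF \cong M$; likewise $h_{[*]}((g')^* \cF') \cong M$. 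The proposition therefore splits into two claims: (A) invariance under birational pullback, $a(f, \cF) = a(h, g^* \cF)$; and (B) invariance under change of sheaf on a fixed resolution, i.e.\ if $\cG_1, \cG_2$ are vector bundles on $\tilde Y$ with $h_{[*]} \cG_1 \cong h_{[*]} \cG_2$, then $a(h, \cG_1) = a(h, \cG_2)$. Applying (A) on both sides and then (B) to $g^* \cF$ and $(g')^* \cF'$ on $\tilde Y$ yields the proposition.

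\emph{Claim (A).} Factoring $g$ as a sequence of blow-ups at closed points, it suffices to treat a single blow-up $g: \tilde Y \to \tilde X$ at a closed point $p$, with exceptional divisor $E$. I would check each of the four terms of $a$ individually. The Euler characteristic terms are invariant because $Rg_*(g^* \cF) = \cF$, giving both $\chi(fg, g^* \cF) = \chi(f, \cF)$ and (taking $\cF = \cO_{\tilde X}$) $\chi(fg, \cO_{\tilde Y}) = \chi(f, \cO_{\tilde X})$. For the first relative Chern class, one verifies $c_1(fg, g^* \cF) = g^* c_1(f, \cF)$ by intersecting with each exceptional component of $fg$: strict transforms of components of $f^{-1}(x)$ are handled by the projection formula together with $g_* \tilde E_i = E_i$, while for the new exceptional $E$ one has $g^* c_1(f, \cF) \cdot E = 0$, matching the fact that $g^* \cF|_E$ is trivial and hence of degree zero. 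The projection formula then gives $c_1(fg, g^* \cF)^2 = c_1(f, \cF)^2$, and the identity $K_{\tilde Y} = g^* K_{\tilde X} + E$ combined with $g^* c_1(f, \cF) \cdot E = 0$ yields invariance of the $c_1 \cdot K$ term. Finally $c_2(fg, g^* \cF) = c_2(f, \cF)$ is immediate from Proposition~\ref{basic-properties-c_2}(4), because the Stein factorization of $fg$ has trivial finite part ($\deg \pi = 1$) when $g$ is birational.

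\emph{Claim (B), the main obstacle.} The two sheaves $\cG_1$ and $\cG_2$ agree on the complement of $h^{-1}(x)$, where they are both pulled back from $M|_{X \setminus \{x\}}$, so they are related by a finite sequence of elementary modifications $0 \to \cG' \to \cG \to i_* Q \to 0$ with $i: C \hookrightarrow \tilde Y$ the inclusion of an irreducible exceptional curve and $Q$ a quotient vector bundle on $C$. The change of $\chi(h, \cdot)$ under such a modification is computed from the long exact sequence of $Rh_*$; the changes of $c_1(h, \cdot)^2$ and $c_1(h, \cdot) \cdot K_{\tilde Y}$ follow from standard intersection formulas on $\tilde Y$; and the change of $c_2(h, \cdot)$ is computed by passing filtrations realizing the infimum from $\cG$ to $\cG'$ on a common cover. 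The constraint $h_{[*]} \cG_1 \cong h_{[*]} \cG_2$, which translates into a vanishing of global sections of the cokernel and $R^1$ contributions beyond a prescribed local model, forces the total contribution to $a$ to cancel at each elementary step.

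The genuinely subtle point, and the main obstacle, is the behavior of $c_2$: it is defined as an infimum over generically finite covers $\tilde \pi: \tilde Z \to \tilde Y$ and filtrations of $\tilde \pi^* \cG$, so a filtration realizing $c_2(h, \cG_1)$ does not directly give a comparable filtration of $\cG_2$. One would handle this by pulling both the filtration and the modification to a sufficiently large common cover $\tilde Z \to \tilde Y$, constructing a filtration of the pullback of $\cG_2$ inductively from the given filtration of the pullback of $\cG_1$ by modifying one factor at a time along preimages of $C$, and then using Proposition~\ref{basic-properties-c_2}(4) to compare the averaged sums $\sum_{i<j} L_i L_j / \deg \tilde \pi$. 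A Riemann--Roch calculation on each exceptional curve $C$ then encodes the expected cancellation between the Euler characteristic defect and the Chern class change, closing the argument.
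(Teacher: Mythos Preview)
The paper itself does not prove this proposition; it simply refers to \cite[Proposition~2.9]{La0}. Your overall architecture---pass to a common dominating resolution and split into (A) invariance under birational pullback and (B) invariance under change of bundle on a fixed resolution---is the natural one and matches the strategy there. Claim~(A) is handled correctly; in particular the appeal to Proposition~\ref{basic-properties-c_2}(4) with $\deg\pi=1$ is exactly the right way to see $c_2(fg,g^*\cF)=c_2(f,\cF)$.

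The gap is in Claim~(B). Two points need more than what you have written. First, the assertion that $\cG_1$ and $\cG_2$ are connected by a finite chain of elementary modifications $0\to\cG'\to\cG\to i_*Q\to 0$ with $Q$ locally free on a single irreducible exceptional curve is not automatic: viewing both as subsheaves of $j_*(\cG_1|_{\tilde Y\setminus E})$, the quotient by their intersection is supported on $E$ but need not decompose this way without further work (e.g.\ twisting one sheaf by $\cO(-NE)$ into the other and filtering the quotient by successive restrictions to components). Second, and more seriously, the change in $c_2(h,\cdot)$ under an elementary modification is not computed. You correctly flag this as the crux and sketch a plan---transport a near-optimal filtration of $\tilde\pi^*\cG_1$ across the modification to get one of $\tilde\pi^*\cG_2$, then compare $\sum_{i<j}L_iL_j$---but none of the steps is carried out. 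Because $c_2$ is defined as an \emph{infimum}, one must show both that the transported filtration has a controlled value \emph{and} that the construction is reversible (so the two infima differ by a constant independent of the filtration), and then verify that this constant exactly matches the change in $\chi(h,\cdot)+\tfrac12 c_1(h,\cdot)(c_1(h,\cdot)-K_{\tilde Y})$. Your final sentence (``a Riemann--Roch calculation \ldots\ encodes the expected cancellation'') is an assertion of what must be proved, not a proof. This computation is precisely where the content of \cite[Proposition~2.9]{La0} lies, and it cannot be waved through.
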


If $\cE$ be a reflexive coherent $\cO_X$-module then we set 
$$a(x, \cE):= a (f, \cF),$$
where $f: \tilde X\to X$ is any desingularization of $X$ and $\cF$ is any vector bundle on $\tilde X$
such that $f_{[*]}\cF\simeq \cE$. The above proposition implies that $a(x, \cE)$ is well defined. Therefore  we get a function
$a(x, \cdot ): \Refl (\cO_X) \to \RR$.
If $j: U:=X\backslash \{x\} \hookrightarrow X$ denotes the  open embedding then 
by Lemma \ref{restriction-to-open} the functor $j_*: \Vect (U )\to \Refl(\cO_X)$ is an equivalence of categories, so we can also treat $a(x, \cdot )$ as a function  $\Vect (U) \to \RR$.

\medskip

In the remaining part of the section we reprove the results of  \cite[Section 4]{La0} giving more details and providing
simpler proofs that avoid the use of reduction cycles.
First we note the following lemma.

\begin{Lemma}\label{generation-by-few-sections}
Let $\cG$ be a vector bundle of rank $r$ on $\tilde X$. If $\cG$ is globally generated outside of a finite number of $k$-points $T$ of $E$ then for general $(r+2)$ sections of $\cG$ the cokernel of the induced map
$\cO ^{\oplus (r+2)}_X\to \cG$ is supported on $T$.
\end{Lemma}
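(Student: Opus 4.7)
The plan is to reduce the statement to a dimension count on a finite-dimensional parameter space of sections. First I would observe that every closed point of $\tilde X$ lies on $E$ and has residue field $k$: since $X=\Spec A$ has a unique closed point $x$ and $f$ is proper birational, $\tilde X\setminus E\simeq X\setminus\{x\}$ has no closed points, while $E$ is a projective curve over $k$. Moreover every nonempty closed subset of $\tilde X$ meets $E$, since its image in $X$ is closed and contains $x$. Consequently any coherent sheaf on $\tilde X$ whose fiber vanishes at every closed point of $E\setminus T$ has support contained in $T$, so it suffices to exhibit $(r+2)$ sections for which the induced map $\cO_{\tilde X}^{\oplus (r+2)}\to \cG$ is surjective at every closed point of $E\setminus T$.

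Next I would produce a finite-dimensional $k$-subspace $W\subset H^0(\tilde X,\cG)$ such that the evaluation map $W\to \cG(p)$ is surjective for every closed point $p\in E\setminus T$. By hypothesis $\cG$ is globally generated on $\tilde X\setminus T$, and since $\cG$ is locally free of rank $r$, any $r$ global sections spanning $\cG(p)$ already generate $\cG$ on a Zariski neighborhood of $p$. Quasi-compactness of the open set $\tilde X\setminus T$ inside the Noetherian scheme $\tilde X$ then lets me pick finitely many global sections $t_1,\dots,t_N$ generating $\cG$ on all of $\tilde X\setminus T$, and I set $W=\operatorname{span}_k(t_1,\dots,t_N)\subset H^0(\tilde X,\cG)$.

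The heart of the argument is a dimension count inside $W^{r+2}$. For each closed point $p\in E\setminus T$, with $\kappa(p)=k$, the surjection $W\to \cG(p)\simeq k^r$ induces a surjection $W^{r+2}\to \cG(p)^{r+2}\simeq k^{r(r+2)}$. The tuples $(s_1,\dots,s_{r+2})$ whose values at $p$ fail to span $\cG(p)$ form the preimage of the locus of $r\times (r+2)$ matrices of rank at most $r-1$, which has codimension $(r+2)-r+1=3$ in $k^{r(r+2)}$; hence the bad locus $B_p\subset W^{r+2}$ has codimension $\ge 3$. Forming the incidence variety
$$\Sigma=\{(p,(s_1,\dots,s_{r+2}))\in (E\setminus T)\times W^{r+2}:s_1(p),\dots,s_{r+2}(p)\text{ fail to span }\cG(p)\}$$
and using $\dim(E\setminus T)\le 1$, I get $\dim\Sigma\le \dim W^{r+2}-2$. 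Its image in $W^{r+2}$ under the second projection is therefore a proper closed subset, and any tuple outside this subset gives the desired generating map.

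The main conceptual obstacle is that $H^0(\tilde X,\cG)$ is only a finitely generated $A$-module, not a finite-dimensional $k$-vector space, so the word ``general'' needs to be interpreted inside an auxiliary finite-dimensional parameter space; the construction of $W$ via quasi-compactness is precisely what reduces the problem to a standard Bertini-type dimension count on a space of matrices of maximal rank.
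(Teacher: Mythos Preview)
Your overall strategy—pick a finite-dimensional $k$-subspace $W\subset H^0(\tilde X,\cG)$ generating $\cG$ on $\tilde X\setminus T$, form an incidence locus, and use that the determinantal locus of $r\times(r+2)$ matrices of rank $\le r-1$ has codimension $3$—is exactly the paper's approach (the paper parametrizes by a Grassmannian of $(r+2)$-dimensional subspaces of $W$ rather than by $W^{r+2}$, but this is cosmetic).

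However, your opening reduction is wrong. You claim that a coherent sheaf on $\tilde X$ whose fiber vanishes at every closed point of $E\setminus T$ must have support contained in $T$. This would be true on a Jacobson scheme, but $\tilde X$ is not Jacobson: as you yourself observe, every closed point of $\tilde X$ lies on $E$. Take any curve $C\subset X$ through $x$ and let $\tilde C\subset\tilde X$ be its strict transform. Then $\tilde C$ is a $1$-dimensional closed subset whose closed points are exactly the finite set $\tilde C\cap E$; if $T$ happens to contain these points, then $\cO_{\tilde C}$ has vanishing fiber at every closed point of $E\setminus T$ while $\text{Supp}(\cO_{\tilde C})=\tilde C\not\subset T$. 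So checking surjectivity only along $E\setminus T$ does not force the cokernel to be supported on $T$: the degeneracy locus of your $(r+2)$ sections could be such a curve.

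The fix is immediate and is what the paper does: run the incidence argument over $U=\tilde X\setminus T$, not over $E\setminus T$. Your own count shows that the fiber $B_p\subset W^{r+2}$ has codimension $\ge 3$; since $\dim U=2$, the incidence locus
\[
\Sigma'=\{(p,(s_1,\dots,s_{r+2}))\in U\times W^{r+2}:s_1(p),\dots,s_{r+2}(p)\ \text{do not span}\ \cG(p)\}
\]
satisfies $\dim\Sigma'\le \dim W^{r+2}-1<\dim W^{r+2}$, and a tuple outside the closure of its image in $W^{r+2}$ gives a map $\cO_{\tilde X}^{\oplus(r+2)}\to\cG$ surjective on all of $U$. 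The ``extra'' codimension you obtained by working over the $1$-dimensional set $E\setminus T$ was a symptom that you were checking too little.
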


\begin{proof}
Let us choose a finite dimensional $k$-vector subspace $V\subset H^0(\tilde X , \cG)$ such that the evaluation map $V\otimes _k\cO_{\tilde X}\to \cG$ is surjective on $U:= \tilde X\backslash T$. 
We can assume that $V$ has large dimension so that $s:=\dim V-(r+2)>0$.

Let $G$ be the Grassmannian of $s$-dimensional quotient spaces of $V$. 
Then $U\times _kG\to U$ is the Grassmann scheme representing rank $s$ quotient vector bundles of the trivial bundle  $V\otimes _k \cO_U$. Let $\cK$ be the kernel of the evaluation map $V\otimes _k\cO_{\tilde X}\to \cG$ and
let us consider the subfunctor of the appropriate Grassmann functor, such that $S$-points consist of those quotients $V\otimes _k\cO_S\to \cF$ for which the induced map $\bigwedge ^{s}\cK \to \bigwedge ^{s} \cF$ vanishes. By \cite[Proposition 2.2]{Kl} this functor is represented by a closed subscheme $Z$ of  $U\times _kG$. For $x\in U(k)$, $k$-points of $Z_x\subset G$ correspond to $(r+2)$-dimensional vector subspaces $W\subset V$ such that  $\dim (W\otimes k(x)\cap \cK\otimes k(x))\ge 3$ (which is why $Z$ is also called the $3$-rd special Schubert cycle defined by $\cK$). Naively speaking, $Z$ parametrizes pairs $(x,[W\subset V])$ such that the induced map $W\otimes k(x)\to  \cG\otimes k(x)$ is not surjective. 

By \cite[Corollary 2.9]{Kl} the scheme $Z$ has relative dimension $\dim G-3$ over $U$ (this also follows from the standard dimension computation of the Schubert cell  defined by condition $\dim (W\cap \cK\otimes k(x))\ge 3$ in $G$). It follows that $\dim Z=\dim U+\dim G-3 <\dim G$. So there exists $W\subset V$ of dimension $(r+2)$  (corresponding to some point of $G\backslash p_2(Z)$, where $p_2: Z\to G$ comes from the projection $U\times _kG\to G$) such that the evaluation map $W\otimes _k\cO_{\tilde X}\to \cG$ is surjective over $U$. 
\end{proof}

\begin{Proposition}\label{c_1-full-sheaves}
Let $\cE$ be a reflexive coherent $\cO_X$-module of rank $r$ and let $\cF= f^{[*]}\cE$.
Then the following conditions are satisfied:
\begin{enumerate}
\item $f_*\cF=\cE$,
\item $\dim  H^0(X, R^1 f_*\cF)\le (r+2)\dim  H^0(X, R^1 f_*\cO _{\tilde X})$,
\item There exists some $C>0$ (independent of $\cE$) such that or every irreducible component $E_j$ of $E$ we have
$$0\le c_1(f, \cF)\cdot E_j\le  C\cdot r.$$
\end{enumerate}
\end{Proposition}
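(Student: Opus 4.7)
The plan is to handle the three items in order, with (2) and (3) based on a common four-term exact sequence. For (1), the adjunction produces a map $\cE\to f_*\cF$ that is an isomorphism on the big open $U=X\setminus\{x\}$, because $f$ is an isomorphism there and $\cE|_U$ is locally free; Lemma \ref{restriction-to-open} then implies that both $\cE$ and the torsion-free sheaf $f_*\cF$ coincide with the pushforward along the open inclusion $j:U\hookrightarrow X$ of their common restriction to $U$, so they are equal.

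For (2) I build the auxiliary sequence. A minimal generating set of the $A$-module $\cE$ yields $\cO_{\tilde X}^N\twoheadrightarrow f^*\cE$, and composing with the reflexivization $f^*\cE\to\cF$ (an isomorphism outside finitely many points by Lemma \ref{reflexivization-iso-in-codim-1}) gives a map $\cO_{\tilde X}^N\to\cF$ surjective off a finite set $T\subset E$. Lemma \ref{generation-by-few-sections} reduces this to $(r+2)$ sections. The image $\cI\subset\cF$ is torsion-free of rank $r$ on the regular surface $\tilde X$ and so has projective dimension at most one; hence the kernel $\cK$ in the resolution $0\to\cK\to\cO_{\tilde X}^{r+2}\to\cI\to 0$ is locally free of rank $2$, producing
\[
0 \longrightarrow \cK \longrightarrow \cO_{\tilde X}^{r+2} \longrightarrow \cF \longrightarrow \cC \longrightarrow 0
\]
with $\cC$ of zero-dimensional support. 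Splitting and applying $Rf_*$, I use $R^2f_*\cK=0$ (fibers of $f$ have dimension at most one) and $R^{\ge 1}f_*\cC=0$ (support zero-dimensional) to present $R^1f_*\cF$ as a quotient of $R^1f_*\cO_{\tilde X}^{r+2} \cong (R^1f_*\cO_{\tilde X})^{r+2}$; applying $H^0(X,-)$ gives (2).

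For the lower bound in (3) I keep this sequence. Additivity of $c_1$ and $c_1(\cC)=0$ give $c_1(\cF)=-c_1(\cK)$. The inclusion $\cK\hookrightarrow\cO_{\tilde X}^{r+2}$ induces an injective sheaf map $\det\cK\hookrightarrow\Lambda^2\cO_{\tilde X}^{r+2}\cong\cO_{\tilde X}^{\binom{r+2}{2}}$, so the $\binom{r+2}{2}$ Pl\"ucker sections of $\det\cF=(\det\cK)^{-1}$ have no common zero on $\tilde X$; in particular some section does not vanish at the generic point of $E_j$, and its zero divisor is an effective representative $D_1$ of $c_1(\cF)$ with $E_j\not\subset\Supp D_1$, whence $c_1(f,\cF)\cdot E_j=D_1\cdot E_j\ge 0$.

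The upper bound is the main technical challenge and is where the genuine work lies. My plan is to obtain it by a careful refinement of the same construction: $D_1$ is cut out as a single $r\times r$ minor of the matrix of the chosen global sections of $\cF$, and one must estimate $D_1\cdot E_j$ in terms of the exceptional combinatorics of $f$ alone. A natural route is to first handle the rank-one case, where $\cF$ is the unique line bundle on $\tilde X$ whose restriction to $U$ is $\cE|_U$ and whose local pullback agrees with $f^*\cE$ at the finitely many closed points of $E$ where $f^*\cE$ fails to be reflexive; such a uniqueness statement forces $c_1(f,\cF)\cdot E_j$ to be controlled by the fractional part of Mumford's pullback, which is in turn bounded in terms of the intersection matrix $[E_i\cdot E_j]$. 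For general $r$, one iterates the rank-one argument along a filtration of $\cE$ by reflexive subsheaves, and the delicate point is to verify that the exceptional corrections introduced by taking reflexive hulls at each step accumulate into a bound linear in $r$.
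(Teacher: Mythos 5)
Parts (1) and (2) of your argument are correct and essentially the paper's: global generation of $\cE$ on the affine $X$, reduction to $(r+2)$ sections via Lemma \ref{generation-by-few-sections}, and a surjection $(R^1f_*\cO_{\tilde X})^{\oplus(r+2)}\to R^1f_*\cF$ (your four-term sequence just makes the paper's one-line deduction explicit). Your lower bound in (3) via Pl\"ucker sections of $\det\cF=(\det\cK)^{-1}$ is a valid minor variant of the paper's observation that $\cF|_{E_j}$ is generically globally generated (note only that the Pl\"ucker sections may have common zeros at the finitely many points where the image of $\cO^{\oplus(r+2)}\to\cF$ fails to be locally free; this is harmless since that locus is finite and cannot contain $E_j$).

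The genuine gap is the upper bound in (3), which you explicitly leave as a ``plan.'' The plan as stated does not work. First, in the rank-one case the boundedness of $c_1(f,f^{[*]}\cO_X(-D))\cdot E_j$ in terms of the ``fractional part of Mumford's pullback'' is not something you can invoke: the paper derives the comparison between the de Fernex--Hacon pullback and Mumford's pullback (Subsection \ref{deFernex-Hacon}) \emph{from} this proposition, so your route is circular unless you prove that boundedness independently. Second, the induction on rank via a filtration with rank-one quotients founders on the non-exactness of $f^{[*]}$: the reflexive hull of an extension is not controlled by the reflexive hulls of the pieces, and bounding the resulting exceptional corrections linearly in $r$ is exactly the content of the statement, not a ``delicate verification'' one can defer. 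The paper's actual argument is short and avoids all of this: from the exact sequence $0\to\cF\to\cF(E_j)\to\cF(E_j)|_{E_j}\to 0$ and the equality $f_*\cF=\cE=f_*(\cF(E_j))$ (both equal $j_*$ of their common restriction to $U$, as in your part (1)), the connecting map embeds $f_*(\cF(E_j)|_{E_j})$ into $R^1f_*\cF$, whence
$$\chi(\cF(E_j)|_{E_j})\le h^0(\cF(E_j)|_{E_j})\le \dim H^0(X,R^1f_*\cF)\le (r+2)\dim H^0(X,R^1f_*\cO_{\tilde X})$$
by part (2); Riemann--Roch on the curve $E_j$ gives $\chi(\cF(E_j)|_{E_j})=r\chi(\cO_{E_j})+rE_j^2+\deg\cF|_{E_j}$, and solving for $\deg\cF|_{E_j}=c_1(f,\cF)\cdot E_j$ yields the bound $C\cdot r$ with $C$ depending only on $f$. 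You should replace your sketch by this argument (or an equivalent one); as it stands, item (3) is not proved.
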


\begin{proof}
To see (1) note that we have a canonical map
$$\cE\to f_*f^*\cE\to f_*f^{[*]}\cE,$$
which is an isomorphism on $X\backslash f(E)$. Since $\cE$ is reflexive and $f_*f^{[*]}\cE$ is torsion free, 
it is an isomorphism on the whole $X$.

To see (2) let us note that $\cE$ is globally generated as $X$ is affine. So $\cF$ is globally generated outside a finite number of $k$-points (lying on $E$) and by Lemma \ref{generation-by-few-sections}
for general $(r+2)$ sections of $\cF$ the cokernel of the corresponding map
$\cO ^{\oplus (r+2)}_{\tilde X}\to \cF$ is supported on a finite set of points.
This shows that we have a surjective map $(R^1f_*\cO_{\tilde X} )^{\oplus (r+2)}\to R^1f_*\cF$, which gives (2).
Note also that for every $j$ the above maps gives a map $\cO ^{\oplus (r+2)}_{E_j}\to \cF|_{E_j}$, whose cokernel is supported on a finite set of points. Therefore $c_1(f, \cF)\cdot E_j=\deg \cF|_{E_j} \ge 0$.

Now let us consider a short exact sequence 
$$0\to \cF\to \cF(E_j)\to \cF(E_j)|_{E_j}\to 0.$$
(1) implies that $f_* \cF=\cE=f_*(\cF(E_j))$, so the map $ f_*(\cF(E_j)|_{E_j})\to R^1f_*\cF$ is injective.
Therefore we have
$$ \chi ( \cF(E_j)|_{E_j})\le \dim H^0(\tilde X, \cF(E_j)|_{E_j})\le \dim  H^0(X, R^1 f_*\cF)\le (r+2)\dim  H^0(X, R^1 f_*\cO _{\tilde X}).$$
But by the Riemann--Roch theorem on $E_j$ we have
$ \chi ( \cF(E_j)|_{E_j})= r\chi (\cO_{E_j} ) +r E_j^2+ \deg \cF|_{E_j}$, so we get
$$c_1(f, \cF)\cdot E_j \le (r+2)\dim  H^0(X, R^1 f_*\cO _{\tilde X}) - r\chi (\cO_{E_j} ) -r E_j^2,$$
which gives the second inequality in (3).
\end{proof}

\medskip

\medskip

\begin{Corollary}\label{corr-c_1-full-sheaves}
	In the notation of Proposition \ref{c_1-full-sheaves} the $\QQ$-divisor $-c_1(f, \cF)$ is effective and there exists a constant $\tilde C>0$ depending only on $f: \tilde X\to X$ such that for every $\cE$ we have
	$$-c_1(f, \cF)^2\le \tilde C\cdot r^2.$$
\end{Corollary}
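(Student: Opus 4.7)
The plan is to deduce both claims from the negative-definiteness of the intersection form on the exceptional divisor $E$ combined with the bounds $0 \le c_1(f,\cF) \cdot E_j \le Cr$ from Proposition \ref{c_1-full-sheaves}(3). Write $D := c_1(f,\cF) = \sum_i a_i E_i$ as a $\QQ$-linear combination of the irreducible components $E_i$ of $E$, and set $b_j := D \cdot E_j$, so that $0 \le b_j \le Cr$.

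For the effectiveness of $-D$, I would run a Zariski-lemma style argument: decompose $D = D^+ - D^-$ with $D^+ := \sum_{a_i > 0} a_i E_i$ and $D^- := \sum_{a_i < 0} (-a_i) E_i$, so that $D^+, D^-$ are effective $\QQ$-divisors with disjoint supports. If $D^+ \ne 0$, negative-definiteness forces $(D^+)^2 < 0$, whereas
$$(D^+)^2 = D^+ \cdot D + D^+ \cdot D^- = \sum_{a_i > 0} a_i b_i + D^+ \cdot D^-,$$
and both summands are $\ge 0$: the first because $a_i > 0$ and $b_i \ge 0$, the second because distinct exceptional components intersect non-negatively and $D^+, D^-$ are supported on disjoint sets of components. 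This contradicts $(D^+)^2 < 0$, so $D^+ = 0$ and $-D$ is effective.

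For the quadratic bound, let $F := -[E_i \cdot E_j]$ be the positive-definite matrix opposite to the intersection matrix on $E$. Viewing $\mathbf{a} = (a_i)$ and $\mathbf{b} = (b_j)$ as column vectors, the relations $b_j = \sum_i a_i (E_i \cdot E_j)$ read $F\mathbf{a} = -\mathbf{b}$, hence $\mathbf{a} = -F^{-1}\mathbf{b}$ and
$$-D^2 = \mathbf{a}^T F \mathbf{a} = \mathbf{b}^T F^{-1} \mathbf{b} \le \lambda_{\max}(F^{-1}) \cdot \|\mathbf{b}\|^2 \le \lambda_{\max}(F^{-1}) \cdot N \cdot C^2 r^2,$$
where $N$ is the number of irreducible components of $E$. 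Since $\lambda_{\max}(F^{-1})$ and $N$ depend only on $f$, the constant $\tilde C := \lambda_{\max}(F^{-1}) \, N \, C^2$ does the job.

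The whole argument is essentially linear algebra on top of Proposition \ref{c_1-full-sheaves}, so I do not expect a substantial obstacle. The only point where care is needed is tracking signs in the Zariski-type decomposition, i.e., verifying that the ``nef on $E$'' condition $D \cdot E_j \ge 0$ really forces $-D$, rather than $D$, to be effective.
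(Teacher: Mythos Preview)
Your proof is correct and follows essentially the same approach as the paper: for effectiveness the paper cites \cite[(7)]{Gi}, which is exactly the Zariski-lemma argument you spell out, and for the quadratic bound the paper invokes Cramer's rule to express the $\alpha_i$ linearly in the $b_j=c_1(f,\cF)\cdot E_j$, which is your $\mathbf{a}=-F^{-1}\mathbf{b}$, and then bounds $-D^2$ from the bounds on the $b_j$. Your use of the eigenvalue bound $\mathbf{b}^T F^{-1}\mathbf{b}\le \lambda_{\max}(F^{-1})\|\mathbf{b}\|^2$ is a slightly cleaner way to finish than the paper's more implicit ``Cramer's rule plus bounded entries'' step, but the underlying idea is identical.
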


\begin{proof}
	The first assertion follows from the inequalities $0\le c_1(f, \cF)\cdot E_j$ (see \cite[(7)]{Gi}). To see the second assertion let us write  $c_1(f, \cF)=-\sum \alpha_i E_i$ for some non-negative rational numbers $\alpha_i$. Then by Cramer's rule the $\alpha_i$ depend linearly on the numbers $c_1(f, \cF)\cdot E_j$. So the assertion follows from the last part of Proposition \ref{c_1-full-sheaves}.
\end{proof}

\medskip

\begin{Lemma}\label{c_2-global-gen}
If $\cG$ is a globally generated rank $r$ vector bundle on $\tilde X$ then $c_2(f, \cG)\le 0$.
\end{Lemma}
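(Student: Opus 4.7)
The plan is to argue by induction on the rank $r$. For $r=1$ the lemma is immediate from Proposition~\ref{basic-properties-c_2}(1), which gives $c_2(f,\cL)=0$ for every line bundle $\cL$. So assume $r\ge 2$ and the lemma holds in all ranks smaller than $r$. The inductive step will rest on constructing a section of $\cG$ whose zero locus is disjoint from the exceptional divisor $E$, so that after a blow-up the resulting filtration of (a pullback of) $\cG$ has a bottom line bundle $\cL_{r-1}$ with vanishing relative first Chern class, making all cross terms $L_iL_{r-1}$ in the formula defining $c_2$ equal to zero.

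To produce such a section I would first observe that the restriction $\cG|_E$ is still globally generated on the $1$-dimensional scheme $E$, generated by the image $V\subset H^0(E,\cG|_E)$ of $H^0(\tilde X,\cG)$; indeed the surjection $H^0(\tilde X,\cG)\otimes \cO_{\tilde X}\twoheadrightarrow \cG$ restricts to a surjection after tensoring with $\cO_E$. Since $\cG|_E$ has rank $r\ge 2$ on a scheme of dimension $1$, a dimension count on the incidence variety $\{(v,p)\in V\times E:v(p)=0\}$ shows that a generic $v\in V$ is nowhere vanishing on $E$. Lifting $v$ to $s\in H^0(\tilde X,\cG)$ yields a section with $Z(s)\cap E=\emptyset$; the same codimension computation on $\tilde X$ shows that generically $Z(s)$ is finite (and empty when $r\ge 3$).

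Next, let $\tilde\pi\colon\tilde Y\to \tilde X$ be the blow-up of $Z(s)$ and put $g=f\circ \tilde\pi$; since only points of $\tilde X\setminus E$ were blown up, every $\tilde\pi$-exceptional component is disjoint from the $g$-exceptional locus $g^{-1}(x)$. The pull-back $\tilde\pi^*s$ vanishes on an effective $\tilde\pi$-exceptional divisor $D$, and after saturating the subsheaf generated by $\tilde\pi^*s$ one obtains a short exact sequence
$$0\to \cO_{\tilde Y}(D)\to \tilde\pi^*\cG\to \cG'\to 0,$$
where $\cG'$ is a rank $r-1$ vector bundle on $\tilde Y$, globally generated as a quotient of the globally generated $\tilde\pi^*\cG$. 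The inductive hypothesis applied to $g\colon\tilde Y\to X$ gives $c_2(g,\cG')\le 0$, so for every $\varepsilon>0$ one may choose a generically finite proper morphism $\tilde\tau\colon\tilde Z\to \tilde Y$ from a regular surface and a filtration of $\tilde\tau^*\cG'$ by line-bundle quotients $\cL'_0,\ldots,\cL'_{r-2}$ with $\sum_{i<j}L'_iL'_j/\deg \tilde\tau<\varepsilon$.

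Lifting this filtration to $\tilde\tau^*\tilde\pi^*\cG$ by placing $\tilde\tau^*\cO_{\tilde Y}(D)=\cO_{\tilde Z}(\tilde\tau^*D)$ at the bottom produces a filtration with line-bundle quotients $\cL_i=\cL'_i$ for $i\le r-2$ and $\cL_{r-1}=\cO_{\tilde Z}(\tilde\tau^*D)$. The key computation is that $L_{r-1}:=c_1(g\tilde\tau,\cL_{r-1})=0$: the support of $\tilde\tau^*D$ maps under $g\tilde\tau$ into $X\setminus\{x\}$ and is therefore disjoint from every $(g\tilde\tau)$-exceptional component, so the uniqueness clause in the definition of the relative first Chern class forces $L_{r-1}=0$. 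All cross terms $L_iL_{r-1}$ then vanish, hence $\sum_{i<j}L_iL_j/\deg \tilde\tau=\sum_{i<j}L'_iL'_j/\deg \tilde\tau<\varepsilon$, and letting $\varepsilon\to 0$ gives $c_2(f,\cG)\le 0$. I expect the main technical obstacle to be verifying the existence of the section $s$ with $s|_E$ nowhere vanishing—in particular, that $\cG|_E$ really is globally generated by sections coming from $\tilde X$ (which requires the pointwise surjectivity of the restriction map) and that the codimension count on the possibly reducible curve $E$ yields a nowhere-vanishing member of $V$ as soon as $r\ge 2$.
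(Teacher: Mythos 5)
Your argument is correct in substance, but it takes an inductive route where the paper argues in a single step. The paper applies the same Kleiman-type dimension count over the exceptional curve $E$ not to one section but to an $(r-1)$-dimensional subspace $W\subset V\subset H^0(\tilde X,\cG)$: the locus of pairs $(p,[W])$ for which $W\otimes k(p)\to\cG\otimes k(p)$ fails to be injective has fibre codimension $2$ in the Grassmannian over each closed point of the $1$-dimensional $E$, so a general $W$ gives an injection $\cO_{\tilde X}^{\oplus (r-1)}\to\cG$ that is a subbundle inclusion along $E$, hence everywhere; the resulting filtration of $\cG$ itself (taking $\tilde\pi=\mathrm{id}$ in the definition of $c_2(f,\cdot)$) has every cross term $L_iL_j=0$ because $c_1(f,\cO_{\tilde X})=0$, and $c_2(f,\cG)\le 0$ follows immediately. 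Your rank induction, peeling off one nowhere-vanishing section at a time, reaches the same conclusion but pays for it with the $\varepsilon$-limit over auxiliary covers $\tilde\tau$ and the transport of near-optimal filtrations of $\cG'$ back to $\cG$ (which you handle correctly); it buys nothing in generality here.

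One part of your construction rests on a misreading of the geometry, though it is harmless. Since $X=\Spec A$ with $A$ Henselian local, every closed point of the proper $X$-scheme $\tilde X$ maps to the closed point $x$ and therefore lies on $E=f^{-1}(x)$; hence any closed subscheme of $\tilde X$ disjoint from $E$ is empty. So your section $s$ with $Z(s)\cap E=\emptyset$ is automatically nowhere vanishing on all of $\tilde X$, the divisor $D$ is zero, and the blow-up $\tilde\pi$ is the identity. In particular the ``codimension computation on $\tilde X$'' you invoke to show $Z(s)$ is finite does not literally make sense ($\tilde X$ is not of finite type over $k$), and the claim that the support of $\tilde\tau^*D$ ``maps into $X\setminus\{x\}$'' could never hold for a curve contracted by $g\tilde\tau$ --- any such curve maps to $x$ and is exceptional. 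These points should be replaced by the properness observation above; once that is done, the saturation step is also unnecessary, since $\cG/s\cO_{\tilde X}$ is already locally free, and your proof goes through.
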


\begin{proof}
Let us choose a finite dimensional $k$-vector subspace $V\subset H^0(\tilde X , \cG)$ such that the evaluation map $V\otimes _k\cO_{\tilde X}\to \cG$ is surjective.
Let $G$ be the Grassmannian of $(r-1)$-dimensional $k$-subspaces of $V$ and let $Z\subset \tilde X\times G$ be the subscheme parametrizing pairs $(x,[W\subset V])$ such that the induced map $W\otimes k(x)\to \cG\otimes k(x)$ is not injective. Strictly speaking, we should again use the functorial approach of \cite{Kl} (as in the proof of Lemma \ref{generation-by-few-sections}) but we sketch a naive approach leaving formalization of proof to the reader.

 Let us consider projections $p_1: Z\to \tilde X$ and $p_2: Z\to G$.
For any $k$-point of $\tilde X$ the map $ V\otimes k(x)\to \cG\otimes k(x) $ is surjective. So if $K$ denotes its kernel (which is of dimension $\dim V-r$) then we have
$$p_1^{-1}(x)\simeq \{ [W\subset V]\in G: \dim ((W\otimes k(x))\cap K)\ge 1\} .$$
A standard computation shows that this Schubert cell has 
codimension $2$ in $G$. It follows that $\dim 
p_1^{-1} (E)=\dim E+\dim G-2 <\dim G$. So there exists $W\subset V$ of dimension $(r-1)$ 
(corresponding to some point of $G\backslash p_2(p_1^{-1} (E))$) such that the evaluation map $W\otimes _k\cO_{\tilde X}\to \cG$ is injective and its cokernel is locally free along $E$. 
This implies that we have a short exact sequence
$$0\to \cO_{\tilde X} ^{\oplus (r-1)}\to \cG \to \cL \to 0,$$
where $\cL$ is a line bundle.
This immediately implies the required inequality.
\end{proof}

\medskip

The following theorem is \cite[Corollary 4.13]{La0} with a different proof that avoids the use of reduction cycles.

\begin{Theorem}\label{bound-on-a(x,E)}
	There exists some constants $A$ and $B$ depending only on $X$ such that for every 
	reflexive coherent $\cO_X$-module  $\cE$ of rank $r$ we have
	$$A r^2\le  a(x, \cE) \le Br.$$
\end{Theorem}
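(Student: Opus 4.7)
The plan is to take $\cF:=f^{[*]}\cE$ and estimate the four terms in
\[
a(x,\cE)=\chi(f,\cF)-r\chi(f,\cO_{\tilde X})+\tfrac12 c_1(f,\cF)(c_1(f,\cF)-K_{\tilde X})-c_2(f,\cF)
\]
separately. Proposition~\ref{c_1-full-sheaves} provides $O(r)$ bounds for the first three terms: reflexivity of $\cE$ gives $f_{[*]}\cF=\cE=f_*\cF$, so $\chi(f,\cF)=h^0(R^1 f_*\cF)\le(r+2)h^0(R^1 f_*\cO_{\tilde X})$; writing $-c_1(f,\cF)=\sum\alpha_iE_i$, Cramer's rule applied to the (negative definite) intersection matrix of $E$ converts the bounds $0\le c_1(f,\cF)\cdot E_j\le Cr$ into $\alpha_i=O(r)$, whence $|c_1(f,\cF)\cdot K_{\tilde X}|=O(r)$ and $-c_1(f,\cF)^2=O(r^2)$ (Corollary~\ref{corr-c_1-full-sheaves}). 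One also has $c_1(f,\cF)^2\le 0$ from negative definiteness together with effectiveness of $-c_1(f,\cF)$.

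For the upper bound $a(x,\cE)\le Br$ the only nontrivial step is to bound $\tfrac12 c_1(f,\cF)^2-c_2(f,\cF)$ from above. The relative Bogomolov inequality of Proposition~\ref{basic-properties-c_2}(2) gives $c_2(f,\cF)\ge\tfrac{r-1}{2r}c_1(f,\cF)^2$, whence
\[
\tfrac12 c_1(f,\cF)^2-c_2(f,\cF)\le\tfrac{1}{2r}c_1(f,\cF)^2\le 0;
\]
combining with the three $O(r)$ bounds above finishes the upper bound.

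For the lower bound $a(x,\cE)\ge Ar^2$ only $-c_2(f,\cF)$ is not already at least $-O(r^2)$, so the task is to exhibit a specific filtration witnessing $c_2(f,\cF)\le O(r^2)$. Since $\cE$ is globally generated on the affine $X$, $\cF$ is globally generated away from a finite set $T\subset E$; the Kleiman--Schubert dimension count of Lemma~\ref{c_2-global-gen}, applied with $E$ replaced by $E\setminus T$, yields a general $(r-1)$-dimensional $W\subset H^0(\tilde X,\cF)$ for which $W\otimes\cO_{\tilde X}\to\cF$ is injective with cokernel torsion-free of rank one outside a finite set. Saturating the image (and, if necessary, passing to a blow-up so that the rank-one torsion-free quotient becomes a line bundle) gives a short exact sequence $0\to\cF^1\to\cF\to\cL_0\to 0$ with $\cL_0$ a line bundle and $\cF^1$ containing $\cO_{\tilde X}^{\oplus(r-1)}$ with torsion cokernel, so $\cF^1$ is again globally generated outside a finite set. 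Iterating produces the desired filtration $\cF=\cF^0\supset\ldots\supset\cF^r=0$ with line-bundle quotients $\cL_i$ each globally generated outside a finite subset of $E$; in particular $c_1(f,\cL_i)\cdot E_j\ge 0$ for all $i,j$.

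Writing $-c_1(f,\cL_i)=\sum_k\beta_{ik}E_k$ with $\beta_{ik}\ge 0$, the identity $\sum_i\beta_{ik}=\alpha_k=O(r)$, the elementary inequality $\sum_i\beta_{ik}^2\le(\sum_i\beta_{ik})^2$, and the uniform estimate $-c_1(f,\cL_i)^2\le\lambda\sum_k\beta_{ik}^2$ from negative definiteness together give $-\sum_i c_1(f,\cL_i)^2=O(r^2)$, so that by definition of $c_2(f,\cF)$ as an infimum
\[
2c_2(f,\cF)\le 2\sum_{i<j}c_1(f,\cL_i)c_1(f,\cL_j)=c_1(f,\cF)^2-\sum_i c_1(f,\cL_i)^2=O(r^2).
\]
The delicate step in the proof is the filtration construction: it is essential that each quotient $\cL_i$ be a line bundle with $-c_1(f,\cL_i)$ effective, since otherwise the control $\sum_i\beta_{ik}^2\le(\sum_i\beta_{ik})^2$ and hence the $O(r^2)$ bound are lost.
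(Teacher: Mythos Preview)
Your upper bound is correct and is essentially the paper's argument: combine Proposition~\ref{c_1-full-sheaves}, Corollary~\ref{corr-c_1-full-sheaves}, and the relative Bogomolov inequality to see that $\frac12 c_1(f,\cF)^2-c_2(f,\cF)\le \frac{1}{2r}c_1(f,\cF)^2\le 0$, so all terms in $a(x,\cE)$ are bounded above by a constant times $r$.

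The lower bound, however, has a genuine gap. Your filtration is not produced on the fixed resolution $\tilde X$: to turn each saturated rank-one quotient $\cQ_i=\cL_i\otimes I_{Z_i}$ into a line bundle you must blow up the finite schemes $Z_i\subset T\subset E$, and these blow-ups depend on $\cE$. On the resulting surface $\tilde Y\to\tilde X$ the exceptional configuration $E'$ depends on $\cE$, so both the constant $\lambda$ (the top eigenvalue of the negative intersection matrix of $E'$) and the number of exceptional components entering $\sum_k(\alpha_k')^2$ depend on $\cE$. Hence your chain
\[
-\sum_i c_1(g,\cL_i)^2\le \lambda\sum_{i,k}\beta_{ik}^2\le \lambda\sum_k\Big(\sum_i\beta_{ik}\Big)^2=\lambda\sum_k(\alpha_k')^2
\]
does not give a bound uniform in $\cE$, and the desired $c_2(f,\cF)\le C r^2$ with $C=C(X)$ does not follow. (Concretely, repeated blow-ups at points of $E$ can make both $\lambda$ and the number of components arbitrarily large.) Your final paragraph already singles out the filtration step as ``delicate''; the issue is precisely that effectiveness of $-c_1(g,\cL_i)$ and the identity $\sum_i\beta_{ik}=\alpha_k'$ only hold on the blown-up surface, where the combinatorics are not controlled.

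The paper avoids this by not trying to filter $\cF$ directly. Instead it fixes an $f$-very ample $\cO_{\tilde X}(1)$ and an integer $n_0$ depending only on $f$ so that $\cF(n_0)$ is globally generated on all of $\tilde X$ (using that the cokernel of $\cO_{\tilde X}^{\oplus(r+2)}\to\cF$ is supported on finitely many points, together with $R^1f_*(\cO_{\tilde X}(-E_j)(n))=0$ for $n\ge n_0$). Then Lemma~\ref{c_2-global-gen} applies on $\tilde X$ itself, giving $c_2(f,\cF(n_0))\le 0$; invariance of $\Delta$ under line-bundle twists yields
\[
0\le \Delta(f,\cF)=\Delta(f,\cF(n_0))\le -(r-1)\,c_1(f,\cF(n_0))^2,
\]
and since $c_1(f,\cF(n_0))=c_1(f,\cF)+r\,n_0\,c_1(f,\cO_{\tilde X}(1))$ with $n_0$ fixed, the right-hand side is $O(r^3)$ with constants depending only on $X$. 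Dividing by $2r$ gives the missing lower bound $-\Delta(f,\cF)/(2r)\ge -C'r^2$, and the rest of your $O(r)$ estimates finish the proof.
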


\begin{proof}
	Let $\cE$ be a reflexive coherent $\cO_X$-module of rank $r$ and let $\cF= f^{[*]}\cE$.
	Let us fix an $f$-very ample line bundle $\cO_{\tilde X} (1)$.
	By Serre's theorem there exists some $n_0$ such that for all $n\ge n_0$ such that for all irreducible components $E_j$ of $E$ we have $R^1f_*(\cO_{\tilde X} (-E_j)\otimes \cO_{\tilde X} (n))=0$.
	
	Let us recall that by Lemma \ref{generation-by-few-sections}
	the cokernel of the map $\cO ^{\oplus (r+2)}_{\tilde X}\to \cF$ determined by  $(r+2)$ general sections of $\cF$
	is supported on a finite set of points. Twisting it by $\cO_{\tilde X} (-E_j)\otimes \cO_{\tilde X} (n)$ we see that 
	$$R^1f_*(\cF (n)\otimes \cO_{\tilde X} (-E_j))=0$$
	for all $n\ge n_0$.
	So we have surjective maps $$H^0(\tilde X , \cF (n))\to H^0( E_j, \cF (n)|_{E_j}).$$
	This implies that for all $n\ge n_0$ the bundle $\cF (n)$ is globally generated.
	So by Lemma \ref{c_2-global-gen} we have $c_2(f, \cF (n_0))\le 0$ and hence
	\begin{align*}
		0\le \Delta (f, \cF)&=\Delta(f, \cF (n_0))\le - (r-1)c_1(f, \cF (n_0))^2\\
		&=
		- (r-1)\left( c_1(f, \cF )^2 +n_0 c_1(f, \cF ).c_1(f, \cO_X(1))+n_0^2 c_1(f, \cO_X(1))^2 \right) .
	\end{align*}
	By Proposition \ref{c_1-full-sheaves} we also know that $0\le \chi (f, \cF)\le  (r+2)  \chi (f, \cO_{\tilde X})$. Since
	$$a (x,  \cE)=\chi (f, \cF)-r\, \chi(f,\cO_{\tilde X})+
	\frac{1}{2r}c_1(f,  \cF)(c_1(f,  \cF)-rK_{\tilde X})-\frac{\Delta (f,  \cF)}{2r},$$
	the required inequalities follow from Proposition \ref{c_1-full-sheaves}  and Corollary \ref{corr-c_1-full-sheaves}.
\end{proof}

\begin{Remark}
	The proof shows that in the above theorem one can find $A$ and $B$ that depend only on  numerical invariants of $X$ and its fixed resolution $f: \tilde X\to X$. More precisely,  these constants can be determined by $\dim R^1f_* \cO_X $ (called the geometric genus of the singularity $X$), discrepancies of the exceptional divisor and the intersection matrix of exceptional curves of $f$. This fact is useful when studying how Chern classes change in families of reflexive sheaves on normal surfaces.
\end{Remark}

\begin{Remark}
	It is natural to expect that in notation of  the proof of Theorem \ref{bound-on-a(x,E)},
	there exists a constant $\tilde A$ such that $\Delta (f, \cF)\le \tilde A r^2$.  This would imply that one can find $A$ such that $A r^2\le  a(x, \cE)$. This conjecture is equivalent to  \cite[Conjecture 8.2]{La0}.
\end{Remark}

\subsection{Characterization of the relative second Chern class in positive characteristic}

In this subsection we assume that the base field $k$ has characteristic $p>0$.

\begin{Theorem}
There exists a uniquely determined function $c_2(f, \cdot ): \Vect (\tilde X) \to \RR$, $\cF\to c_2 (f,\cF)$ such that  the following conditions are satisfied:
\begin{enumerate}
\item For every $\cF\in  \Vect (\tilde X)  $ we have
 $c_2 (f, F_{\tilde X}^{*} \cF)=p^{2} c_2 (f, \cF ).$
\item There exists a function $\varphi: \NN\to \RR _{\ge 0}$  such that
for every rank $r$ vector bundle $\cF\in  \Vect (\tilde X)  $ we have
$$\left|\chi (f, \cF) +\frac{1}{2}c_1(f,  \cF)(c_1(f,  \cF)-K_{\tilde X})-c_2 (f,  \cF) -
r\, \chi(f,\cO_{\tilde X})\right|\le \varphi  (r) .
$$
\end{enumerate}
\end{Theorem}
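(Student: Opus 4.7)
The plan is to verify that the function $c_2(f, \cdot)$ already introduced in Definition \ref{definition-c_2-surfaces} satisfies both conditions, and then to derive uniqueness from an iterated Frobenius contraction argument.

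Condition (2) amounts (up to sign) to the bound $|a(f, \cF)| \le \varphi(r)$ in the notation of Definition \ref{addition-a}. By the construction of $a(x, \cdot)$ via Proposition \ref{Wahl-independence} we have $a(f, \cF) = a(x, f_{[*]}\cF)$, and since $f_{[*]}\cF$ is a reflexive coherent $\cO_X$-module of rank $r$, Theorem \ref{bound-on-a(x,E)} yields $|a(x, f_{[*]}\cF)| \le C r^2$ for a constant $C$ depending only on $X$; one may therefore take $\varphi(r) := Cr^2$.

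For condition (1), I would prove both inequalities directly from Definition \ref{definition-c_2-surfaces}, using the standard fact that $F^* \cL \cong \cL^{\otimes p}$ for any line bundle $\cL$ in characteristic $p$. Given a generically finite proper morphism $\tilde \pi: \tilde Y \to \tilde X$ with a filtration of $\tilde \pi^* \cF$ having line bundle quotients $\cL_i$, pulling the filtration back by $F_{\tilde Y}$ produces a filtration of $F_{\tilde Y}^* \tilde \pi^* \cF = \tilde \pi^* F_{\tilde X}^* \cF$ with quotients $\cL_i^{\otimes p}$. The morphism $\tilde \pi$ is unchanged, hence so is the Stein factorization $\tilde Y \xrightarrow{g} Y \xrightarrow{\pi} X$ of $f \circ \tilde \pi$, while each $c_1(g, \cL_i)$ scales by $p$; thus the filtration value scales by $p^2$, giving $c_2(f, F_{\tilde X}^* \cF) \le p^2\, c_2(f, \cF)$. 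For the reverse inequality, starting from $\tilde \pi: \tilde Y \to \tilde X$ with a filtration of $\tilde \pi^* F_{\tilde X}^* \cF$ having quotients $\cM_i$, set $\tilde \tau := F_{\tilde X} \circ \tilde \pi$; then $\tilde \tau^* \cF = \tilde \pi^* F_{\tilde X}^* \cF$, so the same filtration witnesses an upper bound for $c_2(f, \cF)$. Using $f \circ F_{\tilde X} = F_X \circ f$, the Stein factorization of $f \circ \tilde \tau = F_X \circ \pi \circ g$ is $\tilde Y \xrightarrow{g} Y \xrightarrow{F_X \circ \pi} X$, with the same birational factor $g$; hence the classes $c_1(g, \cM_i)$ are unchanged. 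Since $\deg \tilde \tau = p^2 \deg \tilde \pi$, the filtration value for $\cF$ is $p^{-2}$ times its value for $F_{\tilde X}^* \cF$, which gives $p^2\, c_2(f, \cF) \le c_2(f, F_{\tilde X}^* \cF)$.

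For uniqueness, suppose $c_2^{(1)}$ and $c_2^{(2)}$ both satisfy (1) and (2) with bounding functions $\varphi_1, \varphi_2$, and set $\delta(\cF) := c_2^{(1)}(f, \cF) - c_2^{(2)}(f, \cF)$. Subtracting the two instances of (2) gives $|\delta(\cF)| \le \varphi_1(r) + \varphi_2(r)$, where $r = \rk \cF$. Applying this bound to $F_{\tilde X}^{*m} \cF$ (still of rank $r$) and invoking (1) iteratively produces $p^{2m} |\delta(\cF)| = |\delta(F_{\tilde X}^{*m} \cF)| \le \varphi_1(r) + \varphi_2(r)$ for every $m \ge 0$; letting $m \to \infty$ forces $\delta(\cF) = 0$. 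The main technical step is the Frobenius compatibility (1) for the defined $c_2(f, \cdot)$, whose reverse inequality rests on the stability of the birational factor of a Stein factorization under post-composition with the finite morphism $F_X$; once both axioms are secured for one function, uniqueness is an automatic consequence of the contraction $|\delta(\cF)| \le p^{-2m}(\varphi_1(r) + \varphi_2(r))$.
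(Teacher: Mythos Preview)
Your argument is correct and follows essentially the same route as the paper, with two minor presentational differences. For condition~(1) the paper simply invokes Proposition~\ref{basic-properties-c_2}(4) applied to $\tilde\pi=F_{\tilde X}$ (with Stein factorization $\tilde X\xrightarrow{f}X\xrightarrow{F_X}X$, so $\deg\pi=p^2$), whereas you unwind this special case by hand; your two inequalities are exactly the cofiltered comparison used in the proof of that proposition, specialized to Frobenius. For uniqueness the paper, instead of subtracting two candidate functions, applies (2) to $(F_{\tilde X}^m)^*\cF$ and passes to the limit to extract the explicit formula
\[
c_2(f,\cF)=\tfrac{1}{2}c_1(f,\cF)^2+\lim_{m\to\infty}\frac{\chi(f,(F_{\tilde X}^m)^*\cF)}{p^{2m}},
\]
which immediately yields uniqueness and also feeds into Corollary~\ref{a-of-sum-char-p}. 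Your difference argument is equally valid; the explicit formula is the one additional byproduct of the paper's version.
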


\begin{proof}
Existence of the function $c_2(f, \cdot )$ was proven in previous sections. More precisely, (1)
follows from the last part of Proposition \ref{basic-properties-c_2} applied to the Frobenius morphism.
Condition (2) follows from Theorem \ref{bound-on-a(x,E)}. 

To prove uniqueness note that $c_1(f, (F_X^{m})^{*} \cF) =p^m c_1(f, \cF)$. So  (1) and (2) imply that
\begin{align*}
&\left| \chi (f, (F_X^{m})^{*} \cF)  +\frac {1}{2}p^mc_1 (f,\cF). (p^mc_1(f,\cF) -K_{\tilde X})-p^{2m} c_2 (f,\cF) -r\, 
\chi (f, \cO_{\tilde X}) \right| \\
&\le \varphi (r)
\end{align*}
Dividing the above inequality by $p^{2m}$ and passing to the limit we get
$$c_2 (f, \cF) = \frac{1}{2} c_1(f, \cF)^2+\lim _{m\to \infty} \frac{\chi (f, (F_X^{m})^{*} \cF )}{p^{2m}}.$$
\end{proof}

\begin{Remark}
	Theorem  \ref{bound-on-a(x,E)} shows that in fact we can take $\varphi$ to be quadratic and independent of $f$. This is not needed for the proof of uniqueness of $c_2(f, \cdot )$.
\end{Remark}

\medskip

The last formula in the above proof and additivity of the relative Euler characteristic imply the following corollary.

\begin{Corollary}\label{a-of-sum-char-p}
For any $\cF_1, \cF_2\in  \Vect (\tilde X)  $ we have
$$\ch_2 (f,\cF _1\oplus \cF_2)=\ch_2 (f,\cF _1)+\ch_2 (f,\cF _2).$$
Moreover, for any coherent reflexive $\cO_X$-modules $\cE_1$ and $\cE_2$ we have
$$a (x,\cE _1\oplus \cE_2)=a (x,\cE _1)+a (x,\cE _2).$$
\end{Corollary}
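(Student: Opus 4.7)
The plan is to read both additivity statements off the identity
$$\ch_2(f,\cF)=-\lim_{m\to\infty}\frac{\chi(f,(F_{\tilde X}^m)^*\cF)}{p^{2m}}$$
that was obtained at the end of the preceding theorem's proof (after subtracting $\tfrac12 c_1(f,\cF)^2$ and using $c_1(f,(F_{\tilde X}^m)^*\cF)=p^m c_1(f,\cF)$). The first step is to check that the relative Euler characteristic is additive on direct sums. Since $f_\ast$, $f_{[\ast]}$ and $R^1f_\ast$ are all additive on coherent sheaves, the quotient $f_{[\ast]}\cF/f_\ast\cF$ is additive in $\cF$, and hence so is its $k$-dimension of global sections; the same holds for $R^1f_\ast\cF$. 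Since $F_{\tilde X}^\ast$ is exact on $\Vect(\tilde X)$ and obviously preserves finite direct sums, the numerator on the right-hand side above is additive in $\cF$. Passing to the limit yields $\ch_2(f,\cF_1\oplus\cF_2)=\ch_2(f,\cF_1)+\ch_2(f,\cF_2)$, which is the first assertion.

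For the statement about $a(x,-)$, I apply the defining formula of Definition \ref{addition-a} to $\cE=\cE_1\oplus\cE_2$, using that $f^{[\ast]}$ commutes with direct sums and hence $f^{[\ast]}(\cE_1\oplus\cE_2)=\cF_1\oplus\cF_2$ with $\cF_i:=f^{[\ast]}\cE_i$, and that the rank is additive. The terms $\chi(f,\cF)$ and $-r\chi(f,\cO_{\tilde X})$ split additively by the previous paragraph. Expanding $\tfrac12 c_1(c_1-K_{\tilde X})$ with $c_1=c_1(f,\cF_1)+c_1(f,\cF_2)$ produces a cross term $c_1(f,\cF_1)\cdot c_1(f,\cF_2)$. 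The already-proven additivity of $\ch_2$ translates into
$$c_2(f,\cF_1\oplus\cF_2)=c_2(f,\cF_1)+c_2(f,\cF_2)+c_1(f,\cF_1)\cdot c_1(f,\cF_2),$$
and subtracting this from the expansion cancels the cross term exactly, giving $a(x,\cE_1\oplus\cE_2)=a(x,\cE_1)+a(x,\cE_2)$.

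I do not anticipate any substantial obstacle; the proof is essentially formal bookkeeping once the limit formula is in hand. The only point worth emphasizing is that $c_2(f,-)$ itself is \emph{not} additive on direct sums — the defect is exactly the cross term $c_1(f,\cF_1)\cdot c_1(f,\cF_2)$ — and the content of the corollary is that this defect is precisely compensated by the quadratic term in the definition of $a$, so that the combined quantity becomes additive. This compensation is, of course, the hallmark of working with $\ch_2$ rather than $c_2$.
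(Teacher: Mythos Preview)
Your proposal is correct and follows essentially the same route as the paper, which simply remarks that the corollary follows from the limit formula $c_2(f,\cF)=\tfrac12 c_1(f,\cF)^2+\lim_{m\to\infty}\chi(f,(F_{\tilde X}^m)^*\cF)/p^{2m}$ together with additivity of the relative Euler characteristic. Your write-up spells out the bookkeeping (including the cancellation of the cross term in the second assertion) more explicitly than the paper does, but the underlying argument is identical.
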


\subsection{Conjectural characterization of the relative second Chern class in characteristic zero}

Assume that the base field $k$ has characteristic zero. In this case one expects that the following asymptotic Riemann--Roch formula works (see the conjecture in \cite[Introduction]{Wa} for the rank $2$ case;
we need this conjecture in an arbitrary rank as hinted in \cite[Section 8]{La0}).

\begin{Conjecture}\label{Wahl's-conjecture}
If $\cF$ is a rank $r$ vector bundle on $\tilde X$ then
$$\chi (f, \Sym ^m \cF)=-\frac{m^{r+1}}{r!} \left(c_1(f, \cF)^2-c_2(f, \cF) \right) + O(m^r).$$
\end{Conjecture}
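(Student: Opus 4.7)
The plan is to approach the conjecture via the defining characterization of $c_2(f,\cF)$ as an infimum over generically finite proper covers equipped with line-bundle filtrations. Fix $\varepsilon > 0$; by Definition \ref{definition-c_2-surfaces} one can choose a cover $\tilde\pi\colon\tilde Y\to\tilde X$ with Stein factorization $\tilde Y\xrightarrow{g}Y\xrightarrow{\pi}X$ and a filtration $\tilde\pi^*\cF = \cF^0\supset\cdots\supset\cF^r=0$ with line bundle quotients $\cL_1,\ldots,\cL_r$ such that $(\deg\pi)^{-1}\sum_{i<j}L_iL_j\le c_2(f,\cF)+\varepsilon$, where $L_i:=c_1(g,\cL_i)$. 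The symmetric power $\Sym^m(\tilde\pi^*\cF)$ then carries an induced filtration whose $\binom{m+r-1}{r-1}$ successive quotients are the line bundles $\cL^a := \cL_1^{\otimes a_1}\otimes\cdots\otimes\cL_r^{\otimes a_r}$ indexed by multi-indices $a$ with $|a|=m$, and $c_1(g,\cL^a)=\sum_i a_iL_i$.

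Granting additivity of the relative Euler characteristic along this filtration, applying Definition \ref{addition-a} in the rank one case to each $\cL^a$ and summing would express $\chi(g,\Sym^m\tilde\pi^*\cF)$ as
$$-\tfrac{1}{2}\sum_{|a|=m}\Bigl(\sum_i a_iL_i\Bigr)\Bigl(\sum_i a_iL_i-K_{\tilde Y}\Bigr) + \binom{m+r-1}{r-1}\chi(g,\cO_{\tilde Y}) + \sum_{|a|=m}a(g,\cL^a),$$
where the last two terms contribute only $O(m^{r-1})$ by Theorem \ref{bound-on-a(x,E)}. A standard generating function computation then extracts the leading order $m^{r+1}$-coefficient of $\sum_{|a|=m}(\sum_i a_iL_i)^2$, which is a symmetric expression in $L_1,\ldots,L_r$ of the form $\text{const}\cdot\bigl((\textstyle\sum_iL_i)^2 - \sum_{i<j}L_iL_j\bigr)$. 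Dividing by $\deg\pi$ and invoking Proposition \ref{basic-properties-c_2}(4), together with the multiplicativity $c_1(g,\tilde\pi^*\cF)^2=\deg\pi\cdot c_1(f,\cF)^2$, would descend the computation to $\tilde X$ and yield the stated asymptotic formula after letting $\varepsilon\to 0$.

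The essential obstacle is twofold. First, $\chi(g,\cdot)$ is not known to be additive on short exact sequences in characteristic zero: the Frobenius argument underlying Corollary \ref{a-of-sum-char-p} is unavailable, so one must control the non-additive corrections produced by each of the many successive extensions comprising the filtration of $\Sym^m(\tilde\pi^*\cF)$ and show that they accumulate to at most $O(m^r)$. Second, there is no readily available multiplicativity of the relative Euler characteristic itself under finite covers, i.e.\ no clean formula of the form $\chi(g,\tilde\pi^*\cG)=\deg\pi\cdot\chi(f,\cG)+(\text{lower order})$; this is delicate because pullback along $\tilde\pi$ does not commute with $f_{[*]}$ or with $R^1 f_*$ in general. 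Handling both error sources uniformly in $m$, so that the leading $O(m^{r+1})$ contribution always dominates, is the substantial content of the conjecture.
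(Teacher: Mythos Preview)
The statement you are trying to prove is labelled a \emph{Conjecture} in the paper, and the paper does not prove it. It is explicitly presented as open: the paper remarks that one inequality (``$\le$'') follows easily from Theorem~\ref{bound-on-a(x,E)}, that the conjecture would characterise $c_2(f,\cF)$ via the displayed limit formula, and that it is known only in special cases such as quotient singularities (Theorem~\ref{Wahl's-conj-for-quotients}). So there is no ``paper's own proof'' to compare your attempt to.

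On the substance of your sketch: beyond the two obstacles you already flag, the $\varepsilon\to 0$ strategy is structurally unable to produce the conjectured \emph{equality}. If one grants additivity of $\chi(g,\cdot)$ along the filtration and multiplicativity under $\tilde\pi$, then the leading coefficient of $\chi(f,\Sym^m\cF)$ that your computation outputs is $-\tfrac{1}{(r+1)!}\bigl(c_1(f,\cF)^2-(\deg\pi)^{-1}\sum_{i<j}L_iL_j\bigr)$, which depends on the particular cover and filtration chosen. Since the left-hand side does not, this already shows the granted hypotheses cannot all hold exactly; the non-additive corrections are not nuisance error terms but must precisely make up the gap between $(\deg\pi)^{-1}\sum_{i<j}L_iL_j$ and $c_2(f,\cF)$ for \emph{every} choice of filtration. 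Put differently, what you would need is that $c_2(f,\Sym^m\cF)$ is computed from $c_1(f,\cF)$ and $c_2(f,\cF)$ by the usual splitting-principle formula, and Remark~\ref{behaviour-on-tensors} records exactly this as an open problem equivalent to the conjecture. Thus the $\varepsilon$-approximation can at best recover the easy inequality the paper already notes; the opposite inequality, which is the actual content of the conjecture, is inaccessible by taking an infimum over filtrations.
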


This conjecture would allow us to characterize $c_2(f, \cF)$ as
$$c_2 (f, \cF) =  c_1(f, \cF)^2+r! \lim _{m\to \infty} \frac{\chi (f, \Sym ^m \cF  )}{m^{r+1}}.$$
Using Theorem \ref{bound-on-a(x,E)} it is easy to prove inequality $\le$ in the above conjecture
(cf. \cite[Theorem 4.15]{La0} for the rank $2$ case).
This allows us to consider 
$$\liminf _{m\to \infty} \frac{\chi (f, \Sym ^m \cF  )}{m^{r+1}}$$
(or similar limits) to define local relative Chern classes (see, e.g., \cite{La2} for one example of use of such definition).

\medskip

Let us recall that a quotient surface singularity is a quotient of  the spectrum of a regular $2$-dimensional ring
by a linear action of a finite group. The following result follows from \cite[Theorem 5.1]{La0}.

\begin{Theorem}\label{Wahl's-conj-for-quotients}
Assume that $X=\Spec A$, where $A$ is the henselization of a local ring of a quotient surface singularity.  
Then Conjecture \ref{Wahl's-conjecture} holds for any desingularization of $X$. Moreover, for any 
vector bundle $\cF$ on $\tilde X$ the number $c_2(f, \cF)$ is rational.
\end{Theorem}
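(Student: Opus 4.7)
My plan is to follow the strategy of \cite[Theorem 5.1]{La0} and reduce both assertions to Riemann--Roch computations on a smooth resolution of the Galois cover defining the quotient singularity.  Since $X$ is a quotient surface singularity in characteristic zero, I would write $X=Z/G$ where $Z=\Spec R$ is the henselization of a regular $2$-dimensional local ring, $G$ acts linearly, and $|G|$ is invertible in $k$; let $\rho\colon Z\to X$ denote the quotient.  Next I would pick a smooth surface $\tilde Z$ together with a generically finite proper morphism $\tilde\pi\colon\tilde Z\to\tilde X$ whose Stein factorization factors as $\tilde Z\xrightarrow{g}Z\xrightarrow{\rho}X$ (for instance, desingularize the main component of $\tilde X\times_XZ$).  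Since $Z$ is regular, $g$ is a composition of point blowups between smooth surfaces, and after possibly further blowups $\tilde\pi^*\cF$ admits a filtration with line bundle quotients $\cL_1,\dots,\cL_r$.

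\medskip

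This filtration induces one on $\Sym^m\tilde\pi^*\cF$ with associated graded $\bigoplus_{j_1+\cdots+j_r=m}\bigotimes_i\cL_i^{\otimes j_i}$, giving
$$\chi(g,\Sym^m\tilde\pi^*\cF)=\sum_{j_1+\cdots+j_r=m}\chi\Bigl(g,\bigotimes_i\cL_i^{\otimes j_i}\Bigr).$$
For each line bundle $\cM$ in this sum, the rank-one case of Theorem~\ref{bound-on-a(x,E)} yields $\chi(g,\cM)=-\tfrac12 c_1(g,\cM)(c_1(g,\cM)-K_{\tilde Z})+O(1)$ with uniform error.  Summing over the $\binom{m+r-1}{r-1}$ compositions of $m$ and applying standard combinatorial identities for $\sum j_ij_k$ isolates the leading $m^{r+1}$-coefficient as the predicted constant multiple of $c_1(g,\tilde\pi^*\cF)^2-c_2(g,\tilde\pi^*\cF)$.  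To transfer the estimate from $\tilde Z$ to $\tilde X$, I would use the splitting $\rho_*\cO_Z=\cO_X\oplus\cN$ (available since $|G|$ is invertible in $k$) to produce a comparison
$$\chi(g,\Sym^m\tilde\pi^*\cF)=|G|\cdot\chi(f,\Sym^m\cF)+\varepsilon(m),$$
with $\varepsilon(m)=O(m^r)$ by Theorem~\ref{bound-on-a(x,E)} applied to each isotypical piece.  Combined with Proposition~\ref{basic-properties-c_2}(4), which gives $c_i(g,\tilde\pi^*\cF)=|G|\cdot c_i(f,\cF)$ for $i=1,2$, this establishes Conjecture~\ref{Wahl's-conjecture} for $(f,\cF)$.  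Rationality of $c_2(f,\cF)$ is then immediate: on the smooth cover one identifies $c_2(g,\tilde\pi^*\cF)=\sum_{i<j}c_1(g,\cL_i)\cdot c_1(g,\cL_j)$, a sum of $\QQ$-valued intersection numbers of $\QQ$-divisors supported on the exceptional locus of $g$ (by Cramer's rule applied to its negative-definite intersection matrix), so $c_2(f,\cF)=c_2(g,\tilde\pi^*\cF)/|G|\in\QQ$.

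\medskip

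The main obstacle will be the identification $c_2(g,\tilde\pi^*\cF)=\sum_{i<j}c_1(g,\cL_i)\cdot c_1(g,\cL_j)$ on the smooth surface---that is, showing that the infimum in Definition~\ref{definition-c_2-surfaces} is actually achieved by any line-bundle filtration on a smooth cover.  The upper bound is immediate from the definition, but the matching lower bound will require a careful subadditivity argument iterating the relative Bogomolov inequality of Proposition~\ref{basic-properties-c_2}(2) along the filtration.  A secondary technical point is the uniform control of the Galois-cover error $\varepsilon(m)$, which will require analyzing the isotypical decomposition of $\tilde\pi_*\tilde\pi^*\Sym^m\cF$ together with the rank-independent bounds of Theorem~\ref{bound-on-a(x,E)}.
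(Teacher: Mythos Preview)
The paper does not give its own proof of this theorem; it simply records that the result follows from \cite[Theorem 5.1]{La0}. Your overall architecture---pass to the regular Galois cover $Z$, compute there, and descend via Proposition~\ref{basic-properties-c_2}(4)---is the right shape, and likely close to what \cite{La0} does. But the two steps you flag as obstacles are handled by different mechanisms than the ones you propose, and as written your argument has real gaps.

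For the identification $c_2(g,\tilde\pi^*\cF)=\sum_{i<j}L_iL_j$ over the regular base $Z$, iterating the relative Bogomolov inequality will not pin down the infimum: Bogomolov only gives a lower bound of the form $\frac{r-1}{2r}c_1^2$, not the exact value. The correct route is through Proposition~\ref{Wahl-independence}. Since $Z$ is a regular local surface, every reflexive sheaf on $Z$ is free, so $g_{[*]}\cG\simeq\cO_Z^{\,\mathrm{rk}\,\cG}$ for \emph{every} vector bundle $\cG$ on $\tilde Z$; comparing with $\cG=\cO_{\tilde Z}^{\,\mathrm{rk}\,\cG}$ forces $a(y,\cdot)\equiv 0$. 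Plugging $a(g,\cG)=0$ into Definition~\ref{addition-a} solves for $c_2(g,\cG)$ explicitly in terms of $\chi(g,\cG)$ and $c_1(g,\cG)$, and additivity of $\chi$ along the filtration then yields $c_2(g,\cG)=\sum_{i<j}L_iL_j$ on the nose. In particular this shows $\ch_2(g,\cdot)$ is genuinely additive over the smooth base, not just subadditive.

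For the descent, the bound you invoke from Theorem~\ref{bound-on-a(x,E)} is $|a(x,\cE)|\le C\,r^2$, which for $\cE=f_{[*]}\Sym^m\cF$ of rank $\binom{m+r-1}{r-1}\sim m^{r-1}$ gives only $O(m^{2r-2})$; this swamps the $O(m^r)$ error as soon as $r\ge 3$. What rescues the quotient case is again the regular cover: the additivity of $\ch_2(g,\cdot)$ established above transfers via Proposition~\ref{basic-properties-c_2}(4) to additivity of $\ch_2(f,\cdot)$, hence of $a(x,\cdot)$, on direct sums. Now use that reflexive $\cO_X$-modules correspond to $G$-representations (char $0$, $|G|$ invertible), so every $\cE$ splits as a sum of copies of the finitely many $\cE_\lambda$ attached to irreducible representations; additivity then gives $|a(x,\cE)|\le C'\cdot r$ linearly in the rank, which is exactly the $O(m^{r-1})$ control you need. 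The splitting-principle computation of $c_2(g,\Sym^m\tilde\pi^*\cF)$ then descends to $c_2(f,\Sym^m\cF)$ and the conjecture follows.
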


See \cite{La0} for further discussion of Conjecture \ref{Wahl's-conjecture} 
and its proof in some other cases.

\section{Chern classes of reflexive sheaves on normal surfaces}

Let $X$ be a proper normal variety defined over an algebraically closed field $k$.
For any coherent $\cO_X$-module $\cE$ of rank $r$ the sheaf $\det \cE =(\bigwedge ^r \cE)^{**}$ is 
a reflexive coherent $\cO_X$-module of rank $1$. So we can define $c_1(\cE)\in A^1(X)$ 
as the class of a divisor $D$ such that $\det \cE =\cO_X (D)$.  

In this section we assume that $\dim X=2$.

\subsection{Second Chern class}\label{pullback}

Let  $f: \tilde X\to X$ be any resolution of singularities and 
let $E$ be its exceptional locus. For every $x\in f(E)$ we consider the map $\nu_x: \Spec \cO_{X,x}^h\to X$ from 
the spectrum of the henselization of the local ring of $X$ at $x$
and the base change $f_x=\nu_x^*f: \tilde X _x\to \Spec \cO_{X,x}^h$ of $f$ via $\nu_x$.
Note that the group of divisors on $\tilde X _x$ that are supported on
 the exceptional locus  $E_x$ of $f_x$ embeds into $A_1(\tilde X)$. For any vector bundle 
$\cF$ on $\tilde X$ we  write $c_1 (f_x, \cF)\in A_1(\tilde X)\otimes \QQ$ for the 
image  of $c_1 (f_x, \cF |_{ \tilde X _x})$.
Then for any $\cF\in \Vect (\tilde X)$  we have
$$c_1 (\cF)=f^*c_1 (f_{[*]} \cF)+\sum _{x\in f(E)}c_1 (f_x, \cF)$$
in $ A_1(\tilde X)\otimes \QQ$. So if $\cE\in \Refl(\cO_X) $ and  we choose some vector bundle $\cF$ on $\tilde X$ such that $f_{[*]}\cF\simeq \cE$ then
$$f^*c_1 (\cE)=c_1 (\cF)-\sum _{x\in f(E)} c_1 (f_x, \cF) $$ 
treated as an element of $ A_1(\tilde X)\otimes \QQ.$

\medskip

\begin{Definition}
If $\cE\in \Refl(\cO_X) $ then we choose some vector bundle $\cF$ on $\tilde X$ such that $f_{[*]}\cF\simeq \cE$.
Then we use  the homomorphism $f_*: A_0(\tilde X)\to A_0(X)$ to define the \emph{second Chern class} of $\cE$ as
$$c_2 (\cE):=f_*c_2 (\cF)-\sum _{x\in f(E)} c_2 (f_x, \cF) \, [x]$$ 
treated as an element of $ A_0(X)\otimes \RR.$
\end{Definition}

Note that  the function $c_2: \Refl(\cO_X) \to A_0(X)\otimes \RR$ is well  defined, i.e., the class $f_*c_2 (\cF)-\sum _{x\in f(E)} c_2 (f_x, \cF) \, [x]$  does not depend on the choice of $\cF$ and $f$.
Namely, if we choose   another resolution of singularities $f': \tilde X'\to X$ and $\cF'\in \Vect (\tilde X')$ such that  $ f'_{[*]}\cF'\simeq \cE$ then $f_*c_2 (\cF ')-f'_*c_2 (\cF')$ is a well-defined $0$-cycle supported on $S=f(E)\cup f(E')$ and Proposition \ref{Wahl-independence} implies equality of the corresponding degrees locally at each point of $S$.

\medskip

Using the above definition we can also define for $\cE\in \Refl(\cO_X) $  the \emph{discriminant} $\Delta (\cE)$ as
$$\Delta (\cE):= 2r c_2 (\cE)- (r-1)c_1 (\cE)^2 \in  A_0(X)\otimes \RR,$$
where $r$ is the rank of $\cE$. Let us also recall that we have the degree map $\int _X: A_0(X)\otimes \RR \to \RR$.

The following proposition summarizes some of the basic properties of Chern classes on normal surfaces:

\begin{Proposition}\label{global-basic-properties-c_2}
\begin{enumerate}
\item For any rank $1$ reflexive coherent $\cO_X$-module $\cL$ on $X$ we have $c_2(\cL)=0$.
\item For any $\cE\in \Refl(\cO_X) $  and any rank $1$ reflexive coherent $\cO_X$-module $\cL$ on $X$ we have
$ \Delta (\cE \hat \otimes \cL)=\Delta (\cE).$
\item For any vector bundle $\cF$ on $\tilde X$ we have
$\int_{\tilde X} \Delta ( \cF) \ge  \int_{X} \Delta ( f_{[*]}\cF).$\item If  $ \pi: Y\to X$ is a finite morphism from a normal  surface $Y$ then for any $\cE\in \Refl(\cO_X) $ we have
$\int_Yc_2 ( \pi ^{[*]}\cE) = \deg \pi \cdot \int_Xc_2(\cE).$
\end{enumerate}
\end{Proposition}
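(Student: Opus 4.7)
The plan is to handle each item by combining the local computations at singular points from Proposition~\ref{basic-properties-c_2} with standard Chern-class theory on the smooth resolution $\tilde X$. The first step, common to (1)--(3), is to derive a global decomposition of the discriminant. For $\cE\in\Refl(\cO_X)$ I choose $\cF=f^{[*]}\cE$ on $\tilde X$ and use the identity $c_1(\cF)=f^*c_1(\cE)+\sum_{x\in f(E)}c_1(f_x,\cF)$ together with the orthogonality of the Mumford pullback $f^*c_1(\cE)$ to every exceptional component. This makes the cross term vanish and gives $(f^*c_1(\cE))^2=c_1(\cF)^2-\sum_{x}c_1(f_x,\cF)^2$ in $A_0(\tilde X)\otimes\QQ$. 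Pushing forward by $f_*$ and combining with the definition of $c_2(\cE)$ yields
\[
\Delta(\cE)=f_*\Delta(\cF)-\sum_{x\in f(E)}\Delta(f_x,\cF)\,[x] \qquad \text{in }A_0(X)\otimes\RR.
\]

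With this formula in hand, (1) is immediate: taking $\cF=f^{[*]}\cL$ makes $\cF$ a line bundle on the smooth $\tilde X$, so $c_2(\cF)=0$, and Proposition~\ref{basic-properties-c_2}(1) gives $c_2(f_x,\cF)=0$. For (2), I set $\tilde\cL=f^{[*]}\cL$, a line bundle on $\tilde X$; then $\cF\otimes\tilde\cL$ is a vector bundle satisfying $f_{[*]}(\cF\otimes\tilde\cL)\simeq\cE\hat\otimes\cL$ (both reflexive sheaves agree on the big open locus where $f$ is an isomorphism). The standard identity $\Delta(\cF\otimes\tilde\cL)=\Delta(\cF)$ on the smooth $\tilde X$, together with Proposition~\ref{basic-properties-c_2}(3) giving $\Delta(f_x,\cF\otimes\tilde\cL)=\Delta(f_x,\cF)$, makes both terms on the right of the displayed formula match for $\cE\hat\otimes\cL$ and $\cE$. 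For (3), integrating the displayed formula collapses it to $\int_{\tilde X}\Delta(\cF)-\int_X\Delta(\cE)=\sum_{x\in f(E)}\Delta(f_x,\cF)$, and the right hand side is $\ge 0$ term by term by the relative Bogomolov inequality Proposition~\ref{basic-properties-c_2}(2).

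Part (4) requires a separate argument. I resolve $Y\times_X\tilde X$ to obtain a smooth surface $\tilde Y$ together with a proper birational map $g:\tilde Y\to Y$ and a generically finite proper map $\tilde\pi:\tilde Y\to\tilde X$ satisfying $\pi\circ g=f\circ\tilde\pi$; since $g$ is birational, $\deg(\tilde\pi)=\deg(\pi)$. On the smooth $\tilde Y$ the sheaf $\cG:=\tilde\pi^*\cF$ is a vector bundle, and one checks $g_{[*]}\cG\simeq\pi^{[*]}\cE$ by comparing the two reflexive sheaves on the big open locus where $\pi$ is \'etale. The projection formula for the proper map $\tilde\pi$ gives $\int_{\tilde Y}c_2(\cG)=\deg(\pi)\int_{\tilde X}c_2(\cF)$, while Proposition~\ref{basic-properties-c_2}(4) applied to the local Stein factorization at each $y\in g(E_{\tilde Y})$ yields $c_2(g_y,\cG)=\deg(\pi_y)\cdot c_2(f_{\pi(y)},\cF)$. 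Summing over preimages and using $\sum_{y\mid x}\deg(\pi_y)=\deg(\pi)$ then collapses both contributions with a uniform factor of $\deg(\pi)$, giving $\int_Y c_2(\pi^{[*]}\cE)=\deg(\pi)\int_X c_2(\cE)$.

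The main obstacle I anticipate is the bookkeeping in (4): verifying the identification $g_{[*]}\tilde\pi^*\cF\simeq\pi^{[*]}\cE$ and aligning the local degrees $\deg(\pi_y)$ of the Stein factorizations at each $y\in Y$ with the degrees attached to preimages of $x=\pi(y)$. The equality $\sum_{y\mid x}\deg(\pi_y)=\deg(\pi)$ ultimately comes from the decomposition $\cO_{X,x}^h\otimes_{\cO_{X,x}}\pi_*\cO_Y\simeq\prod_{y\mid x}\cO_{Y,y}^h$ of the henselized finite extension, but applying Proposition~\ref{basic-properties-c_2}(4) consistently requires keeping track of the Stein factorization after henselization, especially in positive characteristic where $\pi$ may be wildly ramified.
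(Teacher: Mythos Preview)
Your proposal is correct and is exactly the argument the paper has in mind: the paper's own proof is a single sentence saying that all four items follow from the local statements in Proposition~\ref{basic-properties-c_2} combined with standard Chern-class identities on the smooth resolution, and you have spelled out precisely that reduction. One small point worth tightening: your displayed decomposition $\Delta(\cE)=f_*\Delta(\cF)-\sum_x\Delta(f_x,\cF)\,[x]$ uses only the identity $c_1(\cF)=f^*c_1(\cE)+\sum_xc_1(f_x,\cF)$ and the definition of $c_2(\cE)$, both of which hold for \emph{any} vector bundle $\cF$ with $f_{[*]}\cF\simeq\cE$, not just for $\cF=f^{[*]}\cE$; this generality is what (3) actually needs, since there $\cF$ is given and one sets $\cE:=f_{[*]}\cF$.
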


\begin{proof}
All the above properties follow easily from  the corresponding properties of relative local Chern classes listed in Proposition \ref{basic-properties-c_2} and from properties of Chern classes of vector bundles on smooth surfaces. 
\end{proof}

\begin{Remark}\label{c_2-direct-sum}
Note that for any two Weil divisors $D_1$ and $D_2$ on $X$ we have
$$\int_X c_2 (\cO _X (D_1) \oplus \cO_X (D_2))= D_1.D_2.$$
In the local case this follows from \cite[Proposition 2.5]{Wa}. The global assertion follows from this fact and definition
of Mumford's intersection numbers on normal surfaces. Note that  this equality implies that Mumford's intersection numbers behaves well under finite coverings (see Proposition \ref{global-basic-properties-c_2}, (4)).
\end{Remark}

\subsection{Riemann--Roch theorem on normal surfaces}

For $\cE\in  \Refl(\cO_X)$ we write $a(x, \cE)$ for $ a(x, \nu_x^* \cE)$ (see the previous subsection for the notation).
Then we have the following Riemann--Roch type theorem:

\begin{Theorem}\label{RR-theorem-easy}
Let $X$ be a normal proper algebraic surface defined over an algebraically closed field $k$.
Then for any  $\cE\in \Refl(\cO_X)$
we have 
$$\chi (X, \cE) = \frac {1}{2}c_1 (\cE). (c_1(\cE) -K_X)- \int_Xc_2 (\cE) +\rk \cE\cdot \chi (X, \cO_X)+\sum _{x\in \Sing X} a(x, \cE).$$ 
\end{Theorem}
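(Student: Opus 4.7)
The plan is to reduce to the standard Riemann--Roch theorem on a resolution $f:\tilde X\to X$, and then use the definitions of $c_2(\cE)$, of Mumford's intersection pairing, and of the local correction $a(x,\cE)$ to reassemble everything on $X$. Specifically, pick a vector bundle $\cF$ on $\tilde X$ such that $f_{[*]}\cF\simeq \cE$ (this exists because $j_*$ gives an equivalence between vector bundles on $\tilde X\setminus E$ and reflexive sheaves on $X$ away from $\Sing X$, extended across $E$ by pulling back and reflexivizing). The ordinary Riemann--Roch theorem on $\tilde X$ gives
\begin{equation*}
\chi(\tilde X,\cF)=\tfrac{1}{2}c_1(\cF)\bigl(c_1(\cF)-K_{\tilde X}\bigr)-\int_{\tilde X}c_2(\cF)+r\,\chi(\tilde X,\cO_{\tilde X}).
\end{equation*}

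Next I would relate $\chi(\tilde X,\cF)$ to $\chi(X,\cE)$ via the Leray spectral sequence. Since $R^if_*\cF=0$ for $i\ge 2$, and since the inclusion $f_*\cF\hookrightarrow f_{[*]}\cF=\cE$ is an isomorphism in codimension $1$ (hence has cokernel supported at a finite set of closed points lying over $\Sing X$, by Lemma \ref{reflexivization-iso-in-codim-1}), one gets
\begin{equation*}
\chi(\tilde X,\cF)=\chi(X,f_*\cF)-\chi(X,R^1f_*\cF)=\chi(X,\cE)-\chi(f,\cF),
\end{equation*}
and the latter correction localizes as $\chi(f,\cF)=\sum_{x\in \Sing X}\chi(f_x,\cF|_{\tilde X_x})$ because both $\cE/f_*\cF$ and $R^1f_*\cF$ are direct sums of skyscraper sheaves supported at singular points.

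The third ingredient is a local-to-global decomposition of the Chern classes on $\tilde X$. By the formulas recalled in Subsection \ref{pullback}, $c_1(\cF)=f^*c_1(\cE)+\sum_{x\in\Sing X}c_1(f_x,\cF)$ in $A_1(\tilde X)\otimes\QQ$, where $f^*c_1(\cE)$ is Mumford's pullback and the second term is supported on the exceptional locus. Because Mumford's pullback is orthogonal to every exceptional curve and different singular points have disjoint exceptional fibres, the cross terms vanish and one obtains
\begin{align*}
c_1(\cF)^2 &= c_1(\cE)^2+\sum_{x\in\Sing X}c_1(f_x,\cF)^2,\\
c_1(\cF)\cdot K_{\tilde X} &= c_1(\cE).K_X+\sum_{x\in\Sing X}c_1(f_x,\cF)\cdot K_{\tilde X},
\end{align*}
where in the second line we use $K_{\tilde X}=f^*K_X+\sum a_iE_i$ together with the orthogonality of $f^*c_1(\cE)$ to the $E_i$. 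Similarly, the very definition of $c_2(\cE)$ gives $\int_{\tilde X}c_2(\cF)=\int_X c_2(\cE)+\sum_{x\in\Sing X}c_2(f_x,\cF)$.

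Substituting these three decompositions into the Riemann--Roch identity on $\tilde X$ and adding $\chi(f,\cF)$ to both sides, the terms naturally split into a global part on $X$ and a sum of purely local contributions at each singular point. By Definition \ref{addition-a} each local contribution equals $a(x,\cE)+r\,\chi(f_x,\cO_{\tilde X_x})$. Applying the very same argument to $\cE=\cO_X$ (where $c_1$, $c_2$, and $a$ all vanish) yields $\chi(X,\cO_X)=\chi(\tilde X,\cO_{\tilde X})+\sum_{x\in\Sing X}\chi(f_x,\cO_{\tilde X_x})$, which then absorbs the excess $r\sum_x\chi(f_x,\cO_{\tilde X_x})$ into $r\,\chi(X,\cO_X)$, producing the desired identity. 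The only nontrivial point is checking that the comparison of intersection pairings is compatible with Mumford's definition and localises cleanly over each singular point; everything else is bookkeeping based on already-established definitions and Proposition \ref{Wahl-independence} which guarantees independence of $(f,\cF)$.
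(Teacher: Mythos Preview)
Your proof is correct and follows essentially the same approach as the paper: start from Riemann--Roch on a resolution $\tilde X$, relate $\chi(\tilde X,\cF)$ to $\chi(X,\cE)$ via Leray and the relative Euler characteristic, and then decompose $c_1(\cF)$, $c_2(\cF)$ into the Mumford pullback plus local exceptional contributions so that the difference regroups into $\sum_x a(x,\cE)$. The paper's proof is very terse (it simply cites Riemann--Roch on $\tilde X$, Definition~\ref{addition-a}, and the definitions of $c_1(\cE)$, $c_2(\cE)$), while you have spelled out the bookkeeping explicitly, including the $\cE=\cO_X$ step to convert $r\chi(\tilde X,\cO_{\tilde X})+r\sum_x\chi(f_x,\cO_{\tilde X_x})$ into $r\chi(X,\cO_X)$; there is no substantive difference in strategy.
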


\begin{proof}
An easy computation using the Leray spectral sequence shows that for any vector bundle $\cF$ 
on $\tilde X$ we have
$$\chi (\tilde X, f_{[*]}\cF)=\chi (X, \cF)+\sum _{x\in f(E)} \chi (f_x, \cF).$$
For any  $\cE\in \Refl(\cO_X)$ we choose $\cF$ such that $\cE\simeq f_{[*]}\cF$ (e.g., one can take $\cF=f^{[*]}\cE$).
Then the required formula follows from the Riemann--Roch theorem for $\cF$ on $\tilde X$, 
Definition \ref{addition-a} and definitions of $c_1(\cE)$ and $c_2(\cE)$.
\end{proof}

\subsection{Second Chern class in positive characteristic}

The following theorem shows that in positive characteristic  
the second Chern class is uniquely determined by two very simple properties.

\begin{Theorem}\label{RR-theorem}
Let $X$ be a normal proper algebraic surface defined over an algebraically closed field of characteristic $p>0$.
Then there exists a uniquely determined function $\int _Xc_2: \Refl(\cO_X) \to \RR$, $\cE\to \int_Xc_2 (\cE)$, such that  the following conditions are satisfied:
\begin{enumerate}
\item  There exists a function $\varphi: \NN\to \RR_{\ge 0}$ such that for every $\cE\in \Refl(\cO_X)$ of rank $r$ we have
$$\left|\chi (X, \cE) -\left(\frac {1}{2}c_1 (\cE). (c_1(\cE) -K_X)- \int_Xc_2 (\cE) +r\chi (X, \cO_X)\right)\right|\le \varphi(r).$$
\item For every $\cE\in \Refl(\cO_X)$  we have $\int _X c_2 (F_X^{[m]} \cE )=p^{2m} \int_Xc_2 (\cE).$
\end{enumerate}
\end{Theorem}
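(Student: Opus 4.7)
The plan is to establish existence as essentially a repackaging of results already in hand, and then extract uniqueness from a Frobenius-contracting argument exactly parallel to the one used at the local level in the previous subsection.

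For existence, I would simply take $\int_X c_2$ to be the function from Subsection \ref{pullback}. Property (1) is immediate from Theorem \ref{RR-theorem-easy}, which identifies the quantity inside the absolute value with $\sum_{x\in\Sing X} a(x,\cE)$. The two-sided bound $A r^2 \le a(x,\cE) \le B r$ of Theorem \ref{bound-on-a(x,E)} (with constants depending only on the singularity) then gives $|a(x,\cE)|$ bounded by a quadratic in $r$, and since $\Sing X$ is finite we obtain a single quadratic $\varphi$ that works for all $\cE$. Property (2) is an application of Proposition \ref{global-basic-properties-c_2}(4) to the absolute Frobenius $F_X:X\to X$, which is a finite morphism between normal surfaces of degree $p^{\dim X}=p^2$; iterating $m$ times gives $\int_X c_2(F_X^{[m]}\cE)=p^{2m}\int_X c_2(\cE)$.

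For uniqueness, suppose two functions $I$ and $\tilde I$ both satisfy (1) and (2), with respective bounding functions $\varphi$ and $\tilde\varphi$, and set $D(\cE):=I(\cE)-\tilde I(\cE)$. Subtracting the two instances of (1) for the same $\cE$ yields
\[
|D(\cE)|\ \le\ \varphi(r)+\tilde\varphi(r)
\]
whenever $\cE$ has rank $r$. Since $F_X^{[m]}\cE$ has the same rank $r$ as $\cE$, the same bound applies to $F_X^{[m]}\cE$; on the other hand (2) gives $D(F_X^{[m]}\cE)=p^{2m}D(\cE)$, so
\[
|D(\cE)|\ \le\ \frac{\varphi(r)+\tilde\varphi(r)}{p^{2m}}\ \xrightarrow[m\to\infty]{}\ 0,
\]
forcing $D(\cE)=0$. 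As a byproduct, dividing (1) applied to $F_X^{[m]}\cE$ by $p^{2m}$ and using $c_1(F_X^{[m]}\cE)=p^m c_1(\cE)$ gives the explicit formula
\[
\int_X c_2(\cE)\ =\ \tfrac{1}{2}c_1(\cE)^2-\lim_{m\to\infty}\frac{\chi(X,F_X^{[m]}\cE)}{p^{2m}},
\]
where the existence of the limit is, conversely, a consequence of Theorems \ref{RR-theorem-easy} and \ref{bound-on-a(x,E)}.

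There is no serious obstacle: both halves are a direct assembly of earlier results. The only points requiring minor care are that Proposition \ref{global-basic-properties-c_2}(4) does apply to the purely inseparable morphism $F_X$ (it was stated for arbitrary finite maps between normal surfaces, with no separability hypothesis), and that the uniqueness estimate must be applied to $\cE$ and $F_X^{[m]}\cE$ of the same rank, so that a single value $\varphi(r)+\tilde\varphi(r)$ bounds the left-hand side uniformly in $m$.
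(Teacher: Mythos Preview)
Your proposal is correct and follows essentially the same approach as the paper: existence via Theorems \ref{RR-theorem-easy} and \ref{bound-on-a(x,E)} together with Proposition \ref{global-basic-properties-c_2}(4), and uniqueness via the Frobenius-contracting argument. The only cosmetic difference is that the paper proves uniqueness by deriving the explicit limit formula directly from (1) and (2), whereas you first subtract two candidate functions and then record the formula as a byproduct; the underlying computation is the same.
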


\begin{proof}
Existence of $\int _Xc_2$ satisfying the above conditions was already proven. Namely, by Theorem \ref{RR-theorem-easy} we have
$$\chi (X, \cE) -\left(\frac {1}{2}c_1 (\cE). (c_1(\cE) -K_X)- \int_Xc_2 (\cE) +r\chi (X, \cO_X)\right) =a(\cE),$$
where $a(\cE):=\sum _{x\in \Sing X} a(x, \cE)$. Since $X$ has finitely many singular points,
(1) follows from Theorem \ref{bound-on-a(x,E)} (and in fact $\varphi$ can be taken linear in $r$).  The second condition follows from Proposition \ref{global-basic-properties-c_2}, (4).

Now let us prove that $\int _Xc_2$ is uniquely determined by the conditions (1) and (2).
Note that $c_1(F_X^{[m]} \cE) =p^m c_1(\cE)$. So  using (1) we get
\begin{equation*}
\left | \chi (X, F_X^{[m]} \cE) - \left(\frac {1}{2}p^mc_1 (\cE). (p^mc_1(\cE) -K_X)- p^{2m} \int_Xc_2 (\cE) +r \chi (X, \cO_X)\right)
\right| \le \varphi(r).
\end{equation*}
Dividing the above inequality by $p^{2m}$ and passing to the limit we get
$$\int_Xc_2 (\cE) = \frac{1}{2} c_1(\cE)^2- \lim _{m\to \infty} \frac{\chi (X, F_X^{[m]} \cE) }{p^{2m}}.$$
\end{proof}

\medskip

\begin{Remark}\label{ch_2-on-surfaces}
We can also talk about the second  Chern character $ \ch _2 (\cE)$, which is defined as usual by setting $\ch _2 (\cE)= \frac{1}{2} c_1(\cE)^2- c_2 (\cE)$. By the above, we see that 
$$\int_X \ch _2 (\cE)=\lim _{m\to \infty} \frac{\chi (X, F_X^{[m]} \cE) }{p^{2m}}.$$
Existence of this limit is the main reason why we can define the second  Chern character for higher dimensional varieties in positive characteristic (see Theorem \ref{properties-of-ch_2}).
\end{Remark}

\subsection{Second Chern class in characteristic zero}

Let us assume that the base field $k$ has characteristic $0$. 
The following result follows from the Riemann--Roch theorem on resolution of singularities of $X$ and from Theorem \ref{Wahl's-conj-for-quotients}.

\begin{Theorem}\label{asymptotic-RR-quotient-surfaces}
Let $X$ be a normal proper algebraic surface with at most quotient singularities. Then for any
$\cE\in \Refl(\cO_X)$  of rank $r$ we have  $\int_X c_2(\cE) \in \QQ$ and
$$\chi (X, \Sym ^{[m]}\cE)=\frac{m^{r+1}}{r!} \left(c_1(\cE)^2-\int_X c_2(\cE) \right) + O(m^r). $$
\end{Theorem}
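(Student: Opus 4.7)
Fix a resolution $f:\tilde X\to X$ that is an isomorphism over $X_{\mathrm{reg}}$ and put $\cF:=f^{[*]}\cE$. Since $\tilde X$ is smooth of dimension $2$, the reflexive sheaf $\cF$ is automatically a vector bundle, and $f_{[*]}\cF=\cE$: both are reflexive sheaves on $X$ and coincide on the big open subset $X_{\mathrm{reg}}$. The same reflexive-hull argument shows $\Sym^{[m]}\cE\simeq f_{[*]}(\Sym^m\cF)$ for every $m\ge 1$, since $\Sym^m$ of a vector bundle is already reflexive and the two sides coincide on $X_{\mathrm{reg}}$.

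I would now apply the Leray-type identity used in the proof of Theorem \ref{RR-theorem-easy}, which gives
$$\chi(X,\Sym^{[m]}\cE)=\chi(\tilde X,\Sym^m\cF)+\sum_{x\in\Sing X}\chi(f_x,\Sym^m\cF),$$
and estimate each term on the right asymptotically in $m$. For the global term, Hirzebruch--Riemann--Roch on the smooth projective surface $\tilde X$ reduces the computation to the degree-$2$ component of $\ch(\Sym^m\cF)$. The splitting principle expresses this component as a symmetric sum over $r$-tuples $(i_1,\dots,i_r)$ of non-negative integers with $\sum i_j=m$; a short combinatorial calculation—amounting to evaluating the first two symmetric moments of the uniform distribution on that discrete simplex—shows that it is a polynomial in $m$ of degree $r+1$ whose leading term is a known positive rational multiple of $c_1(\cF)^2-c_2(\cF)$, while the remaining pieces of the HRR formula contribute only $O(m^r)$. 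In the normalisation of the theorem statement this yields
$$\chi(\tilde X,\Sym^m\cF)=\frac{m^{r+1}}{r!}\int_{\tilde X}\bigl(c_1(\cF)^2-c_2(\cF)\bigr)+O(m^r).$$

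For each local term, because $x$ is a quotient singularity Theorem \ref{Wahl's-conj-for-quotients} makes Conjecture \ref{Wahl's-conjecture} available at $x$, and gives
$$\chi(f_x,\Sym^m\cF)=-\frac{m^{r+1}}{r!}\bigl(c_1(f_x,\cF)^2-c_2(f_x,\cF)\bigr)+O(m^r),$$
together with the rationality of each $c_2(f_x,\cF)$. The last step is to combine the two expansions using
$$f^*c_1(\cE)=c_1(\cF)-\sum_{x\in\Sing X}c_1(f_x,\cF), \qquad c_1(\cF)\cdot c_1(f_x,\cF)=c_1(f_x,\cF)^2.$$
The second equality is immediate from the defining property of $c_1(f_x,\cF)$: its support lies on exceptional curves $E_i$ over $x$, on which $c_1(\cF)\cdot E_i=c_1(f_x,\cF)\cdot E_i$ by definition. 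Together with the disjointness of supports of the various $c_1(f_x,\cF)$ this yields $c_1(\cE)^2=c_1(\cF)^2-\sum_x c_1(f_x,\cF)^2$, and with the definition $\int_X c_2(\cE)=\int_{\tilde X}c_2(\cF)-\sum_x c_2(f_x,\cF)$ the two leading $m^{r+1}$ contributions in the Leray identity pack exactly into $\tfrac{m^{r+1}}{r!}(c_1(\cE)^2-\int_X c_2(\cE))$, which is the claimed asymptotic. Rationality of $\int_X c_2(\cE)$ then follows from the integrality of $\int_{\tilde X}c_2(\cF)$ and the rationality of each $c_2(f_x,\cF)$ supplied by Theorem \ref{Wahl's-conj-for-quotients}.

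\textbf{Main obstacle.} The essential difficulty is checking that the HRR expansion of $\chi(\tilde X,\Sym^m\cF)$ and the local Wahl formula from Theorem \ref{Wahl's-conj-for-quotients} produce leading $m^{r+1}$ terms with the \emph{same} normalising constant (up to sign), so that the global and local contributions telescope cleanly into $c_1(\cE)^2-\int_X c_2(\cE)$. This reduces to a single combinatorial identity for the Chern character of symmetric powers of a rank-$r$ bundle and its compatibility with the convention adopted in Conjecture \ref{Wahl's-conjecture}.
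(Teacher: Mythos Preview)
Your approach is exactly what the paper sketches: the paper says only that the result ``follows from the Riemann--Roch theorem on resolution of singularities of $X$ and from Theorem \ref{Wahl's-conj-for-quotients}'', and your Leray decomposition---HRR on $\tilde X$ for the global piece, Theorem \ref{Wahl's-conj-for-quotients} for each local piece, then repackaging via $c_1(\cF)=f^*c_1(\cE)+\sum_x c_1(f_x,\cF)$ and the definition of $\int_X c_2(\cE)$---is precisely that argument spelled out.

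One remark on the combinatorial identity you flag as the main obstacle: if you actually carry out the splitting-principle computation (already $r=1$ suffices: $\chi(\tilde X,\cL^m)=\tfrac{m^2}{2}c_1(\cL)^2+O(m)$), the leading coefficient is $\tfrac{1}{(r+1)!}$ rather than $\tfrac{1}{r!}$. The same constant appears consistently in Conjecture \ref{Wahl's-conjecture}, in the displayed formula following it, in Theorem \ref{asymptotic-RR-quotient-surfaces}, and in the definition after Proposition \ref{ch_2-char0}, so this is a systematic typo in the paper rather than a flaw in your strategy; with $(r+1)!$ in place of $r!$ throughout, the global and local leading terms telescope exactly as you describe.
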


\section{Chern classes in higher dimensions}

In this section we develop the theory of  the second Chern classes and characters for
reflexive sheaves on higher dimensional normal varieties. This works well in case of positive characteristic. 
In characteristic zero in dimensions $>2$ the theory depends heavily on  Conjecture \ref{Wahl's-conjecture}, so we can do it only for varieties with at most quotient singularities in codimension $2$.

\medskip

In this section we assume that $n\ge 2$.

\subsection{The second Chern character in positive characteristic}

Let us assume that the base field $k$ has positive characteristic $p$.

\begin{Proposition}\label{existence-of-limit}
Let $\cE$ be a reflexive coherent $\cO_X$-module and let $L_1,..., L_{n-2}$ be line bundles on $X$.
Then 
the sequence $$\left(\frac {  \chi (X, c_1(L_1)...c_1(L_{n-2}) \cdot {F_X^{[m]}\cE} ) }{p^{2m}} \right)_{m\in \NN}$$
 is convergent to some real number. 
 \end{Proposition}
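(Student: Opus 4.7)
The plan is to reduce to the surface case (Remark~\ref{ch_2-on-surfaces}) by cutting $X$ down to a general complete intersection surface via Bertini. The argument proceeds in three main steps: reducing to very ample $L_i$, cutting to a normal surface, and controlling a local length correction.

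First, I would reduce to the case where all $L_i$ are very ample. Writing any line bundle as a difference of very ample ones and using Lemma~\ref{Kollar}(3), we have $c_1(L_1 \otimes M_1) = c_1(L_1) + c_1(M_1) - c_1(L_1)\cdot c_1(M_1)$, so it suffices to show that the cross term
\[
\chi(X, c_1(L_1) c_1(M_1) c_1(L_2) \cdots c_1(L_{n-2}) F_X^{[m]}\cE),
\]
involving $n-1$ first Chern classes, is $o(p^{2m})$. Since $c_1(F_X^{[m]}\cE) = p^m c_1(\cE)$, Lemma~\ref{reflexive-in-K(X)} gives $F_X^{[m]}\cE \equiv \cO_X^{\oplus r} - [\cO_{-p^m c_1(\cE)}] \pmod{K_{n-2}(X)}$, and Lemma~\ref{difference} further reduces $[\cO_{-p^m c_1(\cE)}]$ to $p^m$ times a fixed integral combination of classes $[\cO_{D_i}]$ modulo $K_{n-2}(X)$. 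Lemma~\ref{Kollar}(1) then shows that $n-1$ first Chern classes annihilate $K_{n-2}(X)$, so the cross term equals $r\cdot \mathrm{const} + p^m \cdot \mathrm{const} = O(p^m)$, which is $o(p^{2m})$ as required.

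Second, for $L_i$ very ample, by iterated Bertini (Theorem~\ref{Bertini}) I choose general $H_i \in |L_i|$ such that $S := H_1 \cap \cdots \cap H_{n-2}$ is a normal surface, $\cE|_S$ is a reflexive $\cO_S$-module, and the defining sections form a regular sequence on the torsion-free reflexive sheaf $F_X^{[m]}\cE$. Iterating the standard restriction sequences $0 \to L_i^{-1} \otimes \cF \to \cF \to \cF|_{H_i} \to 0$ then yields
\[
\chi(X, c_1(L_1) \cdots c_1(L_{n-2}) F_X^{[m]}\cE) = \chi(S, (F_X^{[m]}\cE)|_S).
\]
On the big open subset $S^0 \subset S$ complementary to the finite set $S \cap (\Sing X \cup Z_\cE)$ (where $Z_\cE$ is the non-locally-free locus of $\cE$), $(F_X^{[m]}\cE)|_{S^0}$ is locally free and equals $F_S^{[m]}(\cE|_S)|_{S^0}$. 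Consequently $((F_X^{[m]}\cE)|_S)^{**}$ and $F_S^{[m]}(\cE|_S)$ are reflexive sheaves on the normal surface $S$ that agree in codimension one, so they coincide. Writing $T_m$ for the torsion subsheaf of $(F_X^{[m]}\cE)|_S$ and $N_m$ for the cokernel of the resulting injection $(F_X^{[m]}\cE)|_S/T_m \hookrightarrow F_S^{[m]}(\cE|_S)$ (both of finite length supported in the fixed finite set $Z_\cE \cap S$), we obtain
\[
\chi(S, (F_X^{[m]}\cE)|_S) = \chi(S, F_S^{[m]}(\cE|_S)) + \mathrm{length}(T_m) - \mathrm{length}(N_m),
\]
and by Remark~\ref{ch_2-on-surfaces}, $\chi(S, F_S^{[m]}(\cE|_S))/p^{2m}$ converges to $\int_S \ch_2(\cE|_S)$.

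The main obstacle is to show that the local correction $(\mathrm{length}(T_m) - \mathrm{length}(N_m))/p^{2m}$ converges. Both lengths are concentrated at the finitely many points $x \in Z_\cE \cap S$ and depend only on the completion of $\cE$ at $x$ and on the local defining equations of $S$ in $X$. At each such $x$, the germ $\Spec \cO_{S,x}^h$ is a two-dimensional henselian local ring; applying the surface asymptotic Riemann--Roch bounds (Theorem~\ref{bound-on-a(x,E)}) to the relevant reflexive sheaves on this germ, together with the Frobenius scaling of relative second Chern classes from Proposition~\ref{basic-properties-c_2}(4), should yield a local expansion
\[
\mathrm{length}(T_{m,x}) - \mathrm{length}(N_{m,x}) = c(x)\, p^{2m} + O_x(r^2)
\]
for some local constant $c(x)$. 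Summing over $x \in Z_\cE \cap S$ and combining with the surface limit yields the required convergence, with the limit of the original sequence equal to $\int_S \ch_2(\cE|_S) + \sum_{x} c(x)$.
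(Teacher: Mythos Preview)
Your first step (reduction to very ample $L_i$) is correct and matches the paper exactly.

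The gap is in your second and third steps. You fix a single general $S$ and then need, for every $m$, both the regular-sequence identity $\chi(X,c_1(L_1)\cdots c_1(L_{n-2})\cdot F_X^{[m]}\cE)=\chi(S,(F_X^{[m]}\cE)|_S)$ and control of the defect $\mathrm{length}(T_m)-\mathrm{length}(N_m)$. For the first point: after two cuts the sheaf $(F_X^{[m]}\cE)|_{H_1\cap H_2}$ is only $S_0$ in general, so for $n\ge 5$ its embedded primes (which can vary with $m$, at least along $\Sing X$) may meet a fixed $H_3$; a single ``general'' choice does not obviously give a regular sequence for all $m$. For the second point, the invocation of Theorem~\ref{bound-on-a(x,E)} is not adequate: that theorem bounds $a(x,\cE)$ for a reflexive sheaf on a $2$-dimensional Henselian local ring, whereas $T_{m,x}$ and $N_{m,x}$ are lengths that depend on how the $n$-dimensional module $(F_X^{[m]}\cE)_x$ restricts along the ideal $(s_1,\dots,s_{n-2})$. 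Nothing in Proposition~\ref{basic-properties-c_2}(4) or Theorem~\ref{bound-on-a(x,E)} yields an expansion $c(x)\,p^{2m}+O(1)$ for this quantity; you have essentially restated the problem locally rather than solved it.

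The paper sidesteps both issues with a single device you are missing: it first base-changes to an uncountable algebraically closed extension $K\supset k$ (which preserves all the Euler characteristics in the sequence), and then, over $K$, chooses $H_1,\dots,H_{n-2}$ \emph{very general} so that for every $m$ the restriction $(F_X^{[m]}\cE)|_{X_i}$ is reflexive at each stage. With this choice one has $(F_X^{[m]}\cE)|_S = F_S^{[m]}(\cE|_S)$ on the nose, hence $T_m=N_m=0$ for all $m$, and the limit is immediately $\int_S\ch_2(\cE|_S)$ by Remark~\ref{ch_2-on-surfaces}. No local correction analysis is needed.
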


\begin{proof}
First we use similar arguments to that from the proof of Proposition \ref{limit-square-of-divisor} to reduce to the case, when all $L_i$ are very ample.
For any line bundle $M_1$ on $X$, Lemma \ref{Kollar}, (1) and Lemma \ref{reflexive-in-K(X)} imply that
$$c_1(M_1)c_1(L_1)...c_1(L_{n-2}) \cdot {F_X^{[m]}\cE} =c_1(M_1)c_1(L_1)...c_1(L_{n-2}) \cdot 
(\cO_X^{\oplus r}-[\cO_{ -c_1(F_X^{[m]}\cE )}] ) ,$$
 where $r$ denotes the rank of $\cE$.
Since $c_1(F_X^{[m]}\cE ) =p^mc_1 (\cE)$, Lemma \ref{difference} shows that $[\cO_{ -c_1(F_X^{[m]}\cE )}] +p^m
[\cO_{c_1 (\cE)}]\in K_{n-2}(X)$. So Lemma  \ref{Kollar}, (1) gives
$$c_1(M_1)c_1(L_1)...c_1(L_{n-2}) \cdot  [\cO_{ -c_1(F_X^{[m]}\cE )}] =p^mc_1(M_1)c_1(L_1)...c_1(L_{n-2}) \cdot  [\cO_{ c_1(\cE )}] $$
and  we get
\begin{align*}
\lim _{m\to \infty}\frac{\chi (X,  c_1(M_1)c_1(L_1)...c_1(L_{n-2}) \cdot {F_X^{[m]}\cE} )}{p^{2m}}
=0.
\end{align*}
But by Lemma \ref{Kollar} (3) we have
\begin{align*}
&{  \chi (X, c_1(L_1\otimes M_1)c_1(L_2)...c_1(L_{n-2}) \cdot  {F_X^{[m]}\cE}  ) }
={  \chi (X, c_1(L_1)...c_1(L_{n-2}) \cdot {F_X^{[m]}\cE}   ) }\\
&+ \chi (X, c_1(M_1)c_1(L_2)...c_1(L_{n-2}) \cdot {F_X^{[m]}\cE}  ) - 
 {  \chi (X, c_1(M_1)c_1(L_1)...c_1(L_{n-2}) \cdot {F_X^{[m]}\cE}  ) }.
\end{align*}
So if  the limits $$\lim _{m\to \infty }\frac {  \chi (X, c_1(L_1)...c_1(L_{n-2}) \cdot {F_X^{[m]}\cE}  ) }{p^{2m}}$$ and 
$$\lim _{m\to \infty }\frac {  \chi (X, c_1(M_1)c_1(L_2)...c_1(L_{n-2}) \cdot {F_X^{[m]}\cE}  ) }{p^{2m}}$$
exist, then $$\lim _{m\to \infty }\frac {  \chi (X, c_1(L_1\otimes M_1)c_1(L_2)...c_1(L_{n-2}) \cdot {F_X^{[m]}\cE} ) }{p^{2m}}$$
also exists and it is equal to their sum.

Since any line bundle $L$ can be written as $A\otimes B^{-1}$ for some very ample line bundles $A$ and $B$ and the formula in the sequence is symmetric in $(L_1, ..., L_{n-2})$, it is sufficient to prove convergence of the sequence assuming that all line bundles $L_i$ are very ample.

Let $K$ be any uncountable algebraically closed field containing $k$ and let $X_K\to X$ be the base change. Since
$$ \chi (X_K, c_1((L_1)_K)...c_1((L_{n-2})_K) \cdot {F_{X_K}^{[m]}\cE_K} )= \chi (X, c_1(L_1)...c_1(L_{n-2}) \cdot {F_X^{[m]}\cE} ),$$ it is sufficient to prove convergence of the considered sequence after base change to $K$. So in the following we can assume that our base field $k$ is uncountable.

By Theorem \ref{Bertini} a general divisor  $H_1\in |\cL_1|$ is normal and irreducible. Since  $F_{X}^{[m]} \cE $ is  reflexive, $H_1$ is  $F_{X}^{[m]} \cE $-regular
and the restriction $(F_{X}^{[m]} \cE )|_{H_1}$ is torsion free as $\cO_{H_1}$-module (see, e.g., \cite[Lemma 1.1.13]{HL}). By \cite[Corollary 1.1.14]{HL}, for fixed $m$ and general $H_1$ the restriction $(F_{X}^{[m]} \cE )|_{H_1}$ is also reflexive. Therefore, since  $k$ is uncountable, there exists a divisor $H_1\in |\cL_1|$ 
such that $H_1$ is normal, irreducible and the restriction $(F_{X}^{[m]} \cE )|_{H_1}$ is reflexive for all non-negative integers $m$. Since $\cE $ is locally free outside of a closed subset of codimension $\ge 2$ in $X$, there exists a point $x\in H_1$ such that $\cE _x$ is a free $\cO_{X, x}$-module of some rank $r$. Therefore $\cE |_{H_1}$ has the same rank $r$ as $\cE $
and  we have $(F_{X}^{[m]} \cE )|_{H_1}=  F_{H_1}^{[m]} (\cE |_{H_1}).$ 
In particular, we have $c_1(L_1)\cdot F_{X}^{[m]} \cE= F_{H_1}^{[m]} (\cE |_{H_1})$ in $K(X)$.

Proceeding in the same way, we can construct a sequence of divisors $H_i\in  |\cL_i|$, $i=1,...,n-2$, such that for all  non-negative integers $m$ we have
\begin{enumerate}
\item the intersection  $X_i:=\bigcap _{j\le i} H_j $ is normal and irreducible,
\item the restriction $(F_{X}^{[m]} \cE )|_{X_i }$ is reflexive of rank $r$,
\item we have  $F_{X_i}^{[m]} (\cE |_{X_i })=
c_1(L_1)...c_1(L_i)\cdot F_{X}^{[m]} \cE.$
\end{enumerate}

 In particular,  $S=X_{n-2} $ is a normal surface and by   the Riemann--Roch theorem on $S$ (see Remark \ref{ch_2-on-surfaces}) we obtain
$$\lim _{m\to \infty }\frac {  \chi (X, c_1(L_1)...c_1(L_{n-2}) \cdot {F_X^{[m]}\cE}  ) }{p^{2m}}=\lim _{m\to \infty}\frac{\chi(S, F_{S}^{[m]} (\cE |_S )) }{p^{2m}}= \int _S \ch _2 (\cE |_S).$$
 \end{proof}

\medskip

\begin{Theorem}\label{properties-of-ch_2}
Let us fix $\cE\in \Refl(\cO _X)$ and
consider the map $\int_X\ch_2 : N^1( X) ^{\times (n-2)}\to \RR$ sending
$(L_1,..., L_{n-2})$ to 
$$\int _X \ch _2 (\cE)L_1...L_{n-2}:=
\lim _{m\to \infty }\frac {  \chi (X, c_1(L_1)...c_1(L_{n-2}) \cdot {F_X^{[m]}\cE}  ) }{p^{2m}}.$$
This map satisfies the following properties:
\begin{enumerate}
\item It is $\ZZ$-linear in all $L_i$.
\item It is symmetric in $L_1,...,L_{n-2}$.
\item If $\cE$ is a vector bundle on $X$ then $$\int _X \ch _2 (\cE)L_1...L_{n-2}=\int_X \ch _2 (\cE)\cap c_1(L_1)\cap ...\cap c_1(L_{n-2}) \cap [X].$$
\item If $k\subset K$ is an algebraically closed field extension then 
$$\int _{X_K} \ch _2 (\cE_K)(L_1)_K...(L_{n-2})_K=\int _X \ch _2 (\cE)L_1...L_{n-2}.$$
\item If $n>2$ and  $ L_1$ is very ample then for a very general hypersurface $H\in |L_1|$ we have
$$\int _X \ch _2 (\cE)L_1...L_{n-2}=\int _{H}\ch_2 (\cE|_{H})L_2|_{H}...L_{n-2}|_{H}.$$
\end{enumerate}
\end{Theorem}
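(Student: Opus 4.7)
The existence of the limit is Proposition~\ref{existence-of-limit}; we verify the five properties. Symmetry (2) is immediate from the commutativity of the operators $c_1(L_i)$ in Lemma~\ref{Kollar}(2). For $\ZZ$-linearity (1), by (2) we may consider linearity in $L_1$. Using Lemma~\ref{Kollar}(3), $c_1(L_1\otimes M)=c_1(L_1)+c_1(M)-c_1(L_1)c_1(M)$, additivity reduces to checking that $\chi(X,c_1(L_1)c_1(M)c_1(L_2)\cdots c_1(L_{n-2})\cdot F_X^{[m]}\cE)/p^{2m}\to 0$. This is precisely the computation performed in the proof of Proposition~\ref{existence-of-limit}: Lemmas~\ref{reflexive-in-K(X)} and~\ref{difference} allow one to replace $F_X^{[m]}\cE$, modulo $K_{n-2}(X)$, by $\cO_X^{\oplus r}-[\cO_{-c_1(F_X^{[m]}\cE)}]$, and since $c_1(F_X^{[m]}\cE)=p^m c_1(\cE)$, the resulting expression is $O(p^m)$.

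For (3), when $\cE$ is locally free we have $F_X^{[m]}\cE=(F_X^m)^*\cE$. By the singular Riemann--Roch theorem~\cite{BFM}, combined with the identity $\ch(c_1(L)\cdot\cG)=(1-e^{-c_1(L)})\ch(\cG)$ in operational Chow,
$$\chi(X,c_1(L_1)\cdots c_1(L_{n-2})\cdot (F_X^m)^*\cE)=\int_X\Bigl(\prod_{i=1}^{n-2}(1-e^{-c_1(L_i)})\Bigr)\ch((F_X^m)^*\cE)\cap\tau_X,$$
where $\tau_X$ is the Baum--Fulton--MacPherson Todd class. Since $F_X^m$ scales the degree-$j$ part of $A^*(X)\otimes\QQ$ by $p^{mj}$, the unique $p^{2m}$-contribution is $p^{2m}\int_X\ch_2(\cE)\cdot c_1(L_1)\cdots c_1(L_{n-2})\cap[X]$ while all other terms are $O(p^m)$; dividing by $p^{2m}$ and passing to the limit gives (3). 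Property (4) is immediate from the compatibility of $\chi$, Frobenius, reflexivization (under algebraically closed field extensions), and $K$-theoretic intersection with line bundles under flat base change.

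The main substance is (5). By Theorem~\ref{Bertini} and \cite[Corollary 1.1.14]{HL}, for each $m$ there is a dense Zariski open $U_m\subset|L_1|$ parametrizing normal, irreducible hypersurfaces $H$ such that $(F_X^{[m]}\cE)|_H$ is reflexive of rank $r$; as in the proof of Proposition~\ref{existence-of-limit}, for such $H$ one has $(F_X^{[m]}\cE)|_H=F_H^{[m]}(\cE|_H)$ and hence $c_1(L_1)\cdot F_X^{[m]}\cE=i_{H*}F_H^{[m]}(\cE|_H)$ in $K(X)$. After base-changing to some uncountable algebraically closed field via (4), we pick a \emph{single} very general $H\in\bigcap_m U_m$. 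For this $H$, the projection formula applied iteratively yields
$$\chi(X,c_1(L_1)\cdots c_1(L_{n-2})\cdot F_X^{[m]}\cE)=\chi(H,c_1(L_2|_H)\cdots c_1(L_{n-2}|_H)\cdot F_H^{[m]}(\cE|_H))$$
for every $m$, and (5) follows by dividing by $p^{2m}$ and taking the limit. The main obstacle is precisely the passage from ``for each $m$ there is a suitable $H$'' to ``there is one $H$ that works for all $m$'': this is what forces the ``very general'' hypothesis in (5) and the reduction to an uncountable base field via (4).
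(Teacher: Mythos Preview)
Your argument is correct and, for items (1), (2), (4), and (5), essentially identical to the paper's own proof. The one genuine difference is your treatment of (3): the paper does \emph{not} invoke the Baum--Fulton--MacPherson Riemann--Roch theorem directly, but instead deduces (3) from (4) and (5) by cutting down to a very general complete intersection surface $S$ (over an uncountable extension) and appealing to the surface case, where the agreement between $\int_S\ch_2(\cE|_S)$ and the operational $\int_S\ch_2(\cE|_S)\cap[S]$ is built into the construction of $c_2$ via a resolution. Your route via $\chi(X,-)=\int_X\ch(-)\cap\tau_X$ and the scaling $\ch_j((F_X^m)^*\cE)=p^{mj}\ch_j(\cE)$ is valid and more self-contained for this particular item, at the cost of importing the full singular Riemann--Roch machinery; the paper's route keeps the argument entirely within the reduction-to-surfaces paradigm used throughout. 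A minor remark on (5): your appeal to (4) to pass to an uncountable base field is not strictly needed to prove (5) as stated, since the assertion is conditional on $H$ being very general and is vacuous if no such $H$ exists; what (4) buys is that (5) is non-vacuous after base change, which is how it is used later.
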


\begin{proof}
The map is well defined by Proposition \ref{existence-of-limit}. (1) follows from the first part of the proof of this proposition. (2) follows from the definition and Lemma \ref{Kollar}, (2). (4) follows from the definition and the fact that Euler characteristic does not change under base field extension.
To prove (5) let us first note that by Bertini's theorem  general $H\in |L_1|$ is normal and irreducible. So both sides of the equality are well defined. Moreover, by \cite[Corollary 1.1.14]{HL} 
for fixed $m$ and general $H\in |L_1|$, the restriction $(F_X^{[m]}\cE)|_{H}$ is reflexive and hence isomorphic to ${F_{H}^{[m]}(\cE |_{H})}$ (if $\cE|_H$ is also reflexive). So for very general $H\in |L_1|$ we have
$$\chi (X, c_1(L_1)...c_1(L_{n-2}) \cdot {F_X^{[m]}\cE}  )  =\chi (H, c_1(L_2|_{H})...c_1(L_{n-2}|_{H}) \cdot {F_{H}^{[m]}(\cE |_{H})}  ) $$
for all $m$ at the same time.  Dividing the above equality by $p^{2m}$ and passing to the limit gives (5).
Finally, (3) follows from (4), (5) and the analogous fact in the surface case.
\end{proof}

\medskip

\begin{Definition}
For any reflexive coherent $\cO_X$-module $\cE$ of rank $r$ and any line bundles  $ L_1,..., L_{n-2}$ 
we set:
$$\int_X c_1 ^2(\cE) L_1...L_{n-2} :=  c_1 (\cE)^2.L_1...L_{n-2},$$
$$\int _X c_2 (\cE)L_1...L_{n-2}:= \frac{1}{2} \int_X c_1 ^2(\cE) L_1...L_{n-2}-\int _X \ch _2 (\cE)L_1...L_{n-2},$$
$$\int _X \Delta (\cE)L_1...L_{n-2}:=2r \int _X c_2 (\cE) L_1...L_{n-2} -(r-1)\int_X c_1 ^2(\cE) L_1...L_{n-2}.$$
\end{Definition}

\medskip

If $D$ is a Weil divisor on $X$ then $\cO_X(D)$ is a reflexive $\cO_X$-module of rank $1$. In this case our definitions agree and since $F_X^{[m]} \cO_X (D)= \cO_X (p^mD)$, we have
\begin{align*}
\int _X \ch _2 (\cO_X(D))L_1...L_{n-2} &= \lim _{m\to \infty }\frac {  \chi (X, c_1(L_1)...c_1(L_{n-2}) \cdot  \cO_X (p^mD) ) }{p^{2m}}\\
&= \lim _{m\to \infty }\frac {  \chi (X, c_1(L_1)...c_1(L_{n-2}) \cdot  [\cO_{p^mD} ])}{(p^{m})^2} \\
&= \frac{1}{2}D^2.L_1...L_{n-2}=\frac{1}{2}\int_X c_1 ^2(\cO_X(D)) L_1...L_{n-2}.
\end{align*}
In particular,  $\int _X c_2 (\cO_X(D))L_1...L_{n-2}=0$ and $\int _X \Delta (\cO_X(D))L_1...L_{n-2}=0$.

\medskip

\subsection{Properties of Chern classes in positive characteristic}

In this subsection we  assume that $k$ has positive characteristic. Apart from Theorem \ref{properties-of-ch_2}, the second Chern classes have the following properties analogous to that from Proposition \ref{global-basic-properties-c_2}.  We can assume in this proposition that $n>2$.

\begin{Proposition}\label{further-properties-char-p}
For any  line bundles $L_1,..., L_{n-2}$ on $X$ the following conditions are satisfied:
\begin{enumerate}
\item For any rank $1$ reflexive coherent $\cO_X$-module $\cL$ on $X$ we have $$\int _X c_2 (\cL)L_1...L_{n-2}=0.$$
\item For any $\cE\in \Refl(\cO_X) $  and any rank $1$ reflexive coherent $\cO_X$-module $\cL$ on $X$ we have
$$ \int_X \Delta (\cE \hat \otimes \cL)L_1...L_{n-2}=\int_X \Delta (\cE) L_1...L_{n-2}.$$
\item If $f: \tilde X\to X$ is a resolution of singularities and it has separably generated residue field extensions
then for any vector bundle $\cF$ on $\tilde X$ we have
$$\int_{\tilde X} \Delta ( \cF)f^*L_1...f^*L_{n-2} \ge  \int_{X} \Delta ( f_{[*]}\cF) L_1...L_{n-2}. $$
\item Let  $ \pi: Y\to X$ be a finite morphism from a normal  projective variety $Y$. Assume that either $\pi$ has separably generated residue field extensions or it is  the Frobenius morphism (or composition of such maps).
Then for any $\cE\in \Refl(\cO_X) $ we have
$$\int_Yc_2 ( \pi ^{[*]}\cE) \pi^*L_1...\pi^*L_{n-2}= \deg \pi \cdot \int_Xc_2(\cE)L_1...L_{n-2}.$$
\end{enumerate}
\end{Proposition}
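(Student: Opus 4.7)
The plan is to reduce each of parts (2)--(4) to the surface case established in Proposition \ref{global-basic-properties-c_2}, by using the restriction principle of Theorem \ref{properties-of-ch_2}(5) to cut down to a complete intersection surface. Part (1) requires no further work: it is precisely the calculation appearing immediately before the statement of the proposition, where for $\cL = \cO_X(D)$ the identity $F_X^{[m]}\cL = \cO_X(p^mD)$ combined with Proposition \ref{limit-square-of-divisor} yields $\int_X \ch_2(\cL)L_1 \cdots L_{n-2} = \tfrac{1}{2}\int_X c_1^2(\cL)L_1 \cdots L_{n-2}$, and hence $\int_X c_2(\cL)L_1 \cdots L_{n-2} = 0$.

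For parts (2), (3) and the case of (4) in which $\pi$ has separably generated residue field extensions, I first invoke multilinearity of $\int_X \ch_2$ and $\int_X c_1^2$ in the $L_i$'s (Theorem \ref{properties-of-ch_2}(1)) to reduce to the case where all $L_i$ are very ample. Iterating Theorem \ref{properties-of-ch_2}(5) with very general hypersurfaces $H_i \in |L_i|$, I cut $X$ down to a normal complete intersection surface $S = H_1 \cap \cdots \cap H_{n-2}$. Bertini (Theorem \ref{Bertini}) together with \cite[Corollary 1.1.14]{HL} ensures at each stage that the successive cuts remain normal and that the restrictions of the reflexive sheaves involved are again reflexive; Lemma \ref{reflexivization-iso-in-codim-1} then shows that the operations $\hat\otimes$, $(\cdot)_{[*]}$ and $(\cdot)^{[*]}$ commute with restriction up to isomorphism of reflexive sheaves on $S$. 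For part (3) I cut $\tilde X$ simultaneously by the preimages $f^{-1}(H_i)$; since $f$ is birational, Bertini applies on $\tilde X$ as well, producing a regular surface $\tilde S = f^{-1}(S)$, a resolution $f|_{\tilde S}:\tilde S \to S$, and an isomorphism $(f|_{\tilde S})_{[*]}(\cF|_{\tilde S}) \simeq (f_{[*]}\cF)|_S$. Parts (2), (3) and the separable case of (4) now follow from Proposition \ref{global-basic-properties-c_2}(2), (3) and (4) respectively, the latter applied to the finite morphism $\pi|_{\pi^{-1}(S)}\colon \pi^{-1}(S)\to S$ of degree $\deg \pi$.

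The Frobenius case of (4) is immediate from the limit definition and requires no cutting: using $F_X^{[m]}(F_X^{[*]}\cE) = F_X^{[m+1]}\cE$, $c_1(F_X^{[*]}\cE) = p\,c_1(\cE)$ and multilinearity, one computes
$$\int_X \ch_2(F_X^{[*]}\cE)\, F_X^*L_1 \cdots F_X^*L_{n-2} = p^n \int_X \ch_2(\cE)\, L_1 \cdots L_{n-2},$$
and similarly for $\int_X c_1^2$; the difference yields the identity with constant $\deg F_X = p^n$. The general composition case is handled by factoring $\pi$ into a separable component and a Frobenius power, then applying the two previous cases successively. The main obstacle is the joint Bertini step, where the $H_i$ must be chosen simultaneously generically so that the successive complete intersections are normal on $X$ (regular on $\tilde X$, normal on $\pi^{-1}(X)$), all relevant reflexive sheaves restrict to reflexive sheaves, and the reflexive operations in play commute with restriction. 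Each of these is a nonempty open condition on $|L_i|$, so they can all be imposed at once, closing the argument.
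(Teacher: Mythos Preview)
Your approach is essentially the same as the paper's: reduce to the surface case via multilinearity, Bertini, and the restriction principle Theorem~\ref{properties-of-ch_2}(5), then invoke Proposition~\ref{global-basic-properties-c_2}; handle the Frobenius case of (4) directly from the definition. There is, however, one technical step you omit. Theorem~\ref{properties-of-ch_2}(5) is stated for a \emph{very general} hypersurface $H\in|L_1|$, i.e., for $H$ in the complement of a countable union of proper closed subsets of $|L_1|$. Over a countable base field this set may contain no $k$-points, so your iterated cutting-down argument can be empty. The paper fixes this by first invoking Theorem~\ref{properties-of-ch_2}(4) to base change to an uncountable algebraically closed extension $K\supset k$; since all the quantities involved are invariant under this base change, one may then work over $K$ where very general hypersurfaces exist. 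You should insert this step before appealing to (5).

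A second minor point: for part (1) you cite the computation preceding the proposition, which is fine and is indeed how the paper treats it as well (the paper phrases it as reducing to Proposition~\ref{global-basic-properties-c_2}(1), but the direct computation you reference already establishes the claim in arbitrary dimension without any cutting).
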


\begin{proof}
Let us first assume that $k$ has positive characteristic. Passing to the base change we can assume that the base field is uncountable. By Theorem \ref{properties-of-ch_2}, (4) it is sufficient to check  properties (1) and (2)
after restricting to a very general complete intersection surface. In this case these properties follow from Proposition \ref{global-basic-properties-c_2}, (1) and (2). If $\pi$ is the Frobenius morphism then (4) follows from the definition of  
$\int_Xc_2(\cE)L_1...L_{n-2}.$ Apart from that case, we use Theorem \ref{Bertini} and Theorem \ref{properties-of-ch_2}, (4)  to reduce (3) and (4) to the surface case where these properties follow from Proposition \ref{global-basic-properties-c_2}, (3) and (4). For example in (4), Theorem \ref{Bertini} says that for  general $H\in |L_1|$  both $H$ and $\pi^{-1}(H)\in |\pi^*L_1|$ are normal and irreducible, so we can reduce the statement to lower dimension.
\end{proof}

\medskip

One of the most important results that follow from our theory is the following Riemann--Roch type
inequality:

\begin{Theorem}\label{main-consequence}
Let  $L_1, ..., L_{n-2}$ be very ample line bundles on $X$. Then there exists a constant $C$ such that for all
$\cE\in \Refl(\cO_X)$ we have
\begin{align*}
&\left\lvert \chi (X, c_1(L_1)... c_1(L_{n-2}) \cdot \cE) -r\chi (X, c_1(L_1)... c_1(L_{n-2}) \cdot \cO_X) -  \int _X \ch _2 (\cE)L_1...L_{n-2}  \right. \\
 & \left. +\frac{1}{2}   c_1(\cE).(K_X+L_1+...+L_{n-2}). L_1...L_{n-2} 
\right\rvert \le C\cdot r^2,
    \end{align*}
where $r$ is the rank of $\cE$.
\end{Theorem}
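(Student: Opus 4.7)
The plan is to cut $X$ down to a surface by intersecting with very general hyperplanes and then apply the quadratic surface-case bound of Theorem \ref{main2} (equivalently Theorem \ref{bound-on-a(x,E)}).

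First, for each fixed $\cE \in \Refl(\cO_X)$, I would construct a surface $S$ as in the proof of Proposition \ref{existence-of-limit}. After passing to an uncountable algebraically closed extension (which preserves all terms in the inequality by Theorem \ref{properties-of-ch_2} (4)), Bertini's theorem (Theorem \ref{Bertini}) combined with \cite[Corollary 1.1.14]{HL} produces very general hyperplanes $H_j \in |L_j|$ for $j=1,\ldots,n-2$ such that each $X_j := H_1 \cap \cdots \cap H_j$ is normal and irreducible, and each successive restriction $\cE|_{X_j}$ is reflexive of the same rank $r$. Setting $S := X_{n-2}$, the short exact sequences
$$0 \to \cE|_{X_{j-1}} \otimes L_j^{-1} \to \cE|_{X_{j-1}} \to \cE|_{X_j} \to 0$$
give the K-theoretic identity $c_1(L_1) \cdots c_1(L_{n-2}) \cdot \cE = [\cE|_S]$ in $K(X)$, and hence
$$\chi(X, c_1(L_1) \cdots c_1(L_{n-2}) \cdot \cE) = \chi(S, \cE|_S),$$
and similarly with $\cO_X$ in place of $\cE$.

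Next I would apply the surface Riemann--Roch inequality of Theorem \ref{main2} to $\cE|_S \in \Refl(\cO_S)$, which bounds
$$\left|\chi(S, \cE|_S) - \tfrac{1}{2}c_1(\cE|_S).(c_1(\cE|_S) - K_S) + \int_S c_2(\cE|_S) - r\chi(S, \cO_S)\right| \le C_S \cdot r^2$$
for a constant $C_S$ depending only on $S$. I would then translate each term back to $X$: by adjunction $K_S = (K_X + L_1 + \cdots + L_{n-2})|_S$, so $c_1(\cE|_S).K_S = c_1(\cE).(K_X + L_1 + \cdots + L_{n-2}).L_1 \cdots L_{n-2}$ via Theorem \ref{properties-of-intersection-product} (5); iterating Theorem \ref{properties-of-ch_2} (5) gives $\int_S \ch_2(\cE|_S) = \int_X \ch_2(\cE) L_1 \cdots L_{n-2}$; and $c_1(\cE|_S)^2 = c_1(\cE)^2.L_1 \cdots L_{n-2}$. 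Reassembling these identities, one recovers exactly the expression stated in the theorem, with $C_S$ in place of the desired uniform $C$.

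The main obstacle is the uniformity of $C_S$ in $\cE$: since the hyperplanes $H_j$ depend on $\cE$, the surface $S$ varies with $\cE$. I would resolve this by invoking the more refined statement contained in the remark after Theorem \ref{bound-on-a(x,E)}, which says that the surface-case constant is determined by purely local numerical invariants at the singularities of $S$---the geometric genera, the discrepancies, and the intersection matrices of exceptional curves on a resolution. For $(H_1, \ldots, H_{n-2})$ ranging in a fixed Zariski open subset of $|L_1| \times \cdots \times |L_{n-2}|$, the singular locus of $S$ lies in $\Sing X \cap S$, and the analytic germs of $S$ at these finitely many points vary in a bounded family (in fact their types stabilize on a dense open subset of parameters). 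This forces $C_S \le C$ for some constant $C$ depending only on $X$ and $L_1, \ldots, L_{n-2}$, completing the argument.
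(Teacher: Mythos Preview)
Your overall strategy---restrict to a complete intersection surface and apply the surface-case bound from Theorem~\ref{bound-on-a(x,E)}---is exactly the paper's, and your translation of the individual terms (adjunction for $K_S$, restriction for $\ch_2$ and $c_1^2$) is correct.

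The difference lies entirely in how uniformity of the constant is obtained. You correctly flag this as the main obstacle and propose to bound $C_S$ by arguing that the singularity types of $S$ stabilize on a dense open set of parameters. This is plausible but, as written, is not a proof: you would need to show that the local numerical invariants entering the bound (geometric genus, intersection matrix, discrepancies) are actually constant on a nonempty open set of $|L_1|\times\cdots\times|L_{n-2}|$, and then verify that for \emph{every} $\cE$ you can choose your very general $S$ inside that open set. The paper sidesteps this entirely by working with a \emph{single} surface: the geometric generic complete intersection $\cS_K\subset X_K$ introduced in the proof of Lemma~\ref{finite-generation}. Since the restriction of any reflexive $\cE$ to $\cS_K$ is automatically reflexive (the reflexivity locus in parameter space is open and nonempty, hence contains the generic point), one applies Theorems~\ref{RR-theorem-easy} and~\ref{bound-on-a(x,E)} to this one fixed surface and gets a single constant $C$ valid for all $\cE$, with the identification $\int_{\cS_K}\ch_2(\cE_K)=\int_X\ch_2(\cE)L_1\cdots L_{n-2}$ coming from Theorem~\ref{properties-of-ch_2}\,(4). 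In effect the generic surface \emph{is} the stabilized object you are reaching for, so the paper's route is both shorter and fully rigorous where yours would still need work.
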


\begin{proof}
Let $Y$ be the product of $(n-2)$ projective spaces $|L_i|$ and let us consider the incidence scheme $\cS\subset X\times _k Y$, whose points are  of the form $(x;H_1,..., H_{n-2})\in X\times _kY$ with $x\in \bigcap _{j\le n-2} H_j $.
Let $\bar\eta: \Spec K\to Y$ be a geometric generic point of $Y$. Then by Theorem \ref{Bertini}
the fiber $\cS_K$ of the projection $\cS\to Y$ over $\bar \eta$ is a normal surface contained in  $X_K$.
Note that for any $\cE \in \Refl(\cO_X)$  we have by Theorem \ref{RR-theorem-easy}
\begin{align*}
 \chi (X, c_1(L_1)... c_1(L_{n-2}) \cdot \cE) =&\chi (\cS _K, \cE _K)= \int_{\cS _K}\ch_2 (\cE _K)
 - \frac {1}{2}c_1 (\cE _K). K_{\cS _K}\\ 
&+r \chi (\cS _K, \cO_{\cS _K})+\sum _{x\in \Sing \cS _K} a(x, \cE _K).
\end{align*}
By adjunction we have $K_{\cS _K} =K_{X_K}+(L_1)_K+...+(L_{n-2})_K$.
Therefore $$c_1 (\cE _K). K_{\cS _K}= c_1(\cE).(K_X+L_1+...+L_{n-2}). L_1...L_{n-2}.$$
We have $\chi (\cS _K, \cO_{\cS _K})=\chi (X, c_1(L_1)... c_1(L_{n-2}) \cdot \cO_X).$
By Theorem \ref{properties-of-ch_2}, (4)
we also have $$\int_{\cS _K}\ch_2 (\cE _K)= \int _X \ch_2 (\cE) L_1...L_{n-2}.$$
Now the required inequality follows from Theorem \ref{bound-on-a(x,E)} applied to singularities of $\cS _K$.
\end{proof}

\medskip
 
\begin{Remark}\label{determinacy-c_2-higher-dim}
By definition of $\int_Xc_2$ we immediately have
$$\int_Yc_2 ( F_X ^{[*]}\cE) L_1...L_{n-2}= p^2 \cdot \int_Xc_2(\cE)L_1...L_{n-2}$$
 for all $\cE\in \Refl(\cO_X)$.
As in Theorem \ref{RR-theorem} $\int_Xc_2 : \Refl(\cO_X)\times N^1( X) ^{\times (n-2)}\to \RR$ is uniquely determined by 
this property and  inequality from Theorem \ref{main-consequence}.
\end{Remark}

\subsection{The second Chern class in characteristic zero}

Here we assume that $k$ is an algebraically closed field of characteristic $0$. In this subsection we assume that
$X$ is a normal projective variety  with at most quotient singularities in codimension $2$. This means that any general complete intersection surface in $X$ has  at most quotient singularities.

\begin{Proposition}\label{ch_2-char0}
Let $\cE$ be a reflexive coherent $\cO_X$-module of rank $r$ and let $L_1,..., L_{n-2}$ be line bundles on $X$.
Then the sequence
 $$\left(\frac {  \chi (X, c_1(L_1)...c_1(L_{n-2}) \cdot {\Sym ^{[m]} \cE} ) }{m^{r+1}} \right)_{m\in \NN}$$
 is convergent to some rational number. 
\end{Proposition}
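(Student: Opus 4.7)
The plan is to mimic the positive-characteristic proof of Proposition \ref{existence-of-limit}, replacing the Frobenius twist $F_X^{[m]}\cE$ by $\Sym^{[m]}\cE$ and the normalizing denominator $p^{2m}$ by $m^{r+1}$. The surface-level input will no longer be Remark \ref{ch_2-on-surfaces} but instead Theorem \ref{asymptotic-RR-quotient-surfaces}, which is why we need $X$ to have at most quotient singularities in codimension $2$: then a general complete intersection surface in $X$ is a normal surface with at most quotient singularities, on which Wahl's conjecture is known and an asymptotic Riemann--Roch formula for symmetric powers holds.

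First, I would reduce to the case in which all $L_i$ are very ample. Writing $L_1\otimes M_1$ and using Lemma \ref{Kollar}(3) produces an ``error term'' of the form $\chi(X,c_1(M_1)c_1(L_1)\cdots c_1(L_{n-2})\cdot \Sym^{[m]}\cE)$, and I need to show this is $o(m^{r+1})$. By Lemma \ref{reflexive-in-K(X)} applied to $\Sym^{[m]}\cE$ (which has rank $N(m,r)=\binom{m+r-1}{r-1}\sim m^{r-1}/(r-1)!$ and whose $c_1$ grows like $m^r$), we have
\[
\Sym^{[m]}\cE - \cO_X^{\oplus N(m,r)} + [\cO_{-c_1(\Sym^{[m]}\cE)}] \in K_{n-2}(X).
\]
Applying $n-1$ Chern-class operators kills $K_{n-2}(X)$ by Lemma \ref{Kollar}(1), so the error-term Euler characteristic splits into a piece of order $N(m,r)=O(m^{r-1})$ (from the trivial summand) and a piece of order $m^r$ (from the class $[\cO_{-c_1(\Sym^{[m]}\cE)}]$ using the Weil-divisor intersection developed in Section \ref{Section:intersection}). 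Both are $o(m^{r+1})$, so $\ZZ$-multilinearity of the candidate limit in each $L_i$ reduces the problem to the very ample case. By symmetry (Lemma \ref{Kollar}(2)) it suffices to treat $L_1,\dots,L_{n-2}$ all very ample.

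Second, after base-changing to an uncountable algebraically closed field $K\supset k$ (which preserves both $\chi$ and $\Sym^{[m]}$, in the sense that Bertini-type statements survive), I iteratively cut down to a surface. By Theorem \ref{Bertini} and \cite[Corollary 1.1.14]{HL} a general $H_1\in |L_1|$ is normal and irreducible with $(\Sym^{[m]}\cE)|_{H_1}$ reflexive for each fixed $m$; uncountability of the base field allows a common $H_1$ for all $m$ simultaneously. Since $\cE$ is locally free on a big open subset of $X$, the restriction $(\Sym^{[m]}\cE)|_{H_1}$ coincides with the reflexive sheaf $\Sym^{[m]}(\cE|_{H_1})$ (the two agree on a big open subset of $H_1$ and both are reflexive, so Lemma \ref{restriction-to-open} applies). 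Iterating this $n-2$ times yields a chain $X\supset H_1\supset X_2\supset\cdots\supset X_{n-2}=S$ where $S$ is a normal irreducible surface with $c_1(L_1)\cdots c_1(L_{n-2})\cdot\Sym^{[m]}\cE = [\Sym^{[m]}(\cE|_S)]$ in $K(X)$, exactly as in the proof of Proposition \ref{existence-of-limit}. By hypothesis on $X$, the surface $S$ has at most quotient singularities.

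Third, on $S$ I invoke Theorem \ref{asymptotic-RR-quotient-surfaces} (whose proof rests on Theorem \ref{Wahl's-conj-for-quotients}, i.e., Wahl's conjecture for quotient singularities) to obtain
\[
\chi(S,\Sym^{[m]}(\cE|_S))=\frac{m^{r+1}}{r!}\Bigl(c_1(\cE|_S)^2-\int_S c_2(\cE|_S)\Bigr)+O(m^r),
\]
with $\int_S c_2(\cE|_S)\in\QQ$. Dividing by $m^{r+1}$ and passing to the limit gives the desired rational limit
\[
\lim_{m\to\infty}\frac{\chi(X,c_1(L_1)\cdots c_1(L_{n-2})\cdot\Sym^{[m]}\cE)}{m^{r+1}}=\frac{1}{r!}\Bigl(c_1(\cE|_S)^2-\int_S c_2(\cE|_S)\Bigr).
\]
The hardest step is the $o(m^{r+1})$ bookkeeping in the reduction to very ample line bundles: one must carefully track how $c_1(\Sym^{[m]}\cE)$ and $\rk \Sym^{[m]}\cE$ grow in $m$ so that the non-trivial combinatorial factor $N(m,r)$ does not spoil the vanishing of the error terms. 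Everything downstream is a direct transcription of the Frobenius-twist argument, with the appeal to the surface-level asymptotic Riemann--Roch formula (Theorem \ref{asymptotic-RR-quotient-surfaces}) replacing the appeal to Remark \ref{ch_2-on-surfaces}.
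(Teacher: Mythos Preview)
Your proposal is correct and follows essentially the same route as the paper's own proof: reduce to very ample $L_i$ by controlling the error term via Lemma \ref{reflexive-in-K(X)} and the growth estimates $\rk\Sym^{[m]}\cE=O(m^{r-1})$, $c_1(\Sym^{[m]}\cE)=\binom{m+r-1}{r}c_1(\cE)=O(m^r)$, then pass to an uncountable base field, cut down to a very general complete intersection surface $S$, and invoke Theorem \ref{asymptotic-RR-quotient-surfaces}. Your treatment of the identification $(\Sym^{[m]}\cE)|_{H_1}\simeq\Sym^{[m]}(\cE|_{H_1})$ via Lemma \ref{restriction-to-open} is exactly what is needed and is handled in the paper only by reference to the parallel argument in Proposition \ref{existence-of-limit}.
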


\begin{proof}
Proof is similar to the proof of Proposition \ref{existence-of-limit}.
Namely, for any line bundle $M_1$ on $X$, Lemma \ref{Kollar}, (1) and Lemma \ref{reflexive-in-K(X)} imply that
$$c_1(M_1)c_1(L_1)...c_1(L_{n-2}) \cdot {\Sym^{[m]}\cE} =c_1(M_1)c_1(L_1)...c_1(L_{n-2}) \cdot 
(\cO_X^{\oplus \binom{m+r-1}{m} }-[\cO_{ -c_1(\Sym^{[m]}\cE )}] ) .$$
Since $c_1(\Sym^{[m]}\cE ) =\binom{m+r-1}{r}c_1 (\cE)$, Lemma \ref{Kollar} (1) and Lemma \ref{difference} give 
$$c_1(M_1)c_1(L_1)...c_1(L_{n-2}) \cdot  [\cO_{ -c_1(\Sym^{[m]}\cE )}] =-\binom{m+r-1}{r}c_1(M_1)c_1(L_1)...c_1(L_{n-2}) \cdot [\cO_{c_1 (\cE)}].$$
Therefore
$$ \lim_{m\to \infty} \frac {  \chi (X, c_1(M_1) c_1(L_1)...c_1(L_{n-2}) \cdot {\Sym ^{[m]} \cE} ) }{m^{r+1}} =0.$$
Now the same arguments at that in  the proof of Proposition \ref{existence-of-limit} reduce the assertion to the case when all $L_i$ are very ample. Similarly as before we reduce to the case when the base field $k$ is uncountable and
then restrict to a very general complete intersection surface $S\in |L_1|\cap ...\cap |L_{n-2}|$. Then we get
$$\lim _{m\to \infty }\frac {  \chi (X, c_1(L_1)...c_1(L_{n-2}) \cdot {\Sym^{[m]}\cE}  ) }{m^{r+1}}=\lim _{m\to \infty}\frac{\chi(S, \Sym^{[m]} (\cE |_S )) }{m^{r+1}},$$ 
which by Theorem \ref{asymptotic-RR-quotient-surfaces} exists and is a rational number.
\end{proof}

\medskip

The method of proof of Proposition \ref{ch_2-char0}
works for any normal projective variety $X$ in characteristic $0$ for which we know Conjecture \ref{Wahl's-conjecture} for any general complete intersection surface in $X$.
As in the previous subsections, the above proposition allows us to define $\int _X c_2 (\cE)L_1...L_{n-2}$ by
$$\int _X c_2 (\cE)L_1...L_{n-2}:= c_1(\cE)^2.L_1...L_{n-2}-r!  \lim_{m\to \infty} \frac {  \chi (X, c_1(L_1)...c_1(L_{n-2}) \cdot {\Sym ^{[m]} \cE} ) }{m^{r+1}} . $$ 
We can use this to define  $\int _X \ch _2 (\cE)L_1...L_{n-2}$ and $\int _X \Delta (\cE)L_1...L_{n-2}$. 

\begin{Theorem}\label{properties-of-ch_2-char-0}
	Let us fix $\cE\in \Refl(\cO_X) $.
The map $\int_X\ch_2 (\cE): N^1( X) ^{\times (n-2)}\to \QQ$, sending $(L_1,..., L_{n-2})$ to $ \int _X \ch _2 (\cE)L_1...L_{n-2}$,
satisfies the following properties:
\begin{enumerate}
\item It is $\ZZ$-linear and symmetric.
\item If $\cE$ is a vector bundle on $X$ then $$\int _X \ch _2 (\cE)L_1...L_{n-2}=\int_X \ch _2 (\cE)\cap c_1(L_1)\cap ...\cap c_1(L_{n-2})[X].$$
\item If $n>2$ and  $ L_1$ is very ample then for a very general hypersurface $H\in |L_1|$ we have
$$\int _X \ch _2 (\cE)L_1...L_{n-2}=\int _{H}\ch_2 (\cE|_{H})L_2|_{H}...L_{n-2}|_{H}.$$
\item There exists some $N\in \NN$ such that the image of $\int_X\ch_2$ is contained in $\frac{1}{N}\ZZ\subset \QQ$.
\end{enumerate}
\end{Theorem}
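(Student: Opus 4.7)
The plan is to mirror the proof of Theorem \ref{properties-of-ch_2} for the positive characteristic case, substituting the reflexive symmetric power $\Sym^{[m]}\cE$ for the Frobenius pullback $F_X^{[m]}\cE$ and replacing Remark \ref{ch_2-on-surfaces} with the rational asymptotic Riemann--Roch on normal surfaces with quotient singularities (Theorem \ref{asymptotic-RR-quotient-surfaces}). Existence of the defining limit is already granted by Proposition \ref{ch_2-char0}.

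For (1), the vanishing
$$\lim_{m\to \infty} \frac{\chi(X, c_1(M_1)c_1(L_1)\cdots c_1(L_{n-2}) \cdot \Sym^{[m]}\cE)}{m^{r+1}} = 0$$
established in the first part of the proof of Proposition \ref{ch_2-char0}, combined with $c_1(L_1\otimes M_1) = c_1(L_1) + c_1(M_1) - c_1(L_1)c_1(M_1)$ from Lemma \ref{Kollar}, (3), yields $\ZZ$-linearity after reducing first to very ample line bundles via $L = A \otimes B^{-1}$. Symmetry is Lemma \ref{Kollar}, (2). That the form factors through $N^1(X)$ follows once (3) has been established, by reducing any numerical triviality of an $L_i$ to the surface case where it is classical.

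Item (3), which is the key step, is proven exactly as in Theorem \ref{properties-of-ch_2}, (5): after base-changing to an uncountable algebraically closed field (which preserves all relevant Euler characteristics), Bertini \ref{Bertini} combined with \cite[Corollary 1.1.14]{HL} provides a very general $H \in |L_1|$ such that $H$ is normal and irreducible and $(\Sym^{[m]}\cE)|_H$ is reflexive for every $m$ simultaneously. Since $\cE|_H$ has the same generic rank $r$, reflexivity forces $(\Sym^{[m]}\cE)|_H \simeq \Sym^{[m]}(\cE|_H)$, whence
$$c_1(L_1)\cdots c_1(L_{n-2}) \cdot \Sym^{[m]}\cE = c_1(L_2|_H)\cdots c_1(L_{n-2}|_H) \cdot \Sym^{[m]}(\cE|_H)$$
in $K(X)$; dividing by $m^{r+1}$ and passing to the limit produces (3). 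Item (2) follows by iterating (3) down to the surface case, where by Theorem \ref{asymptotic-RR-quotient-surfaces} the limit computes $\tfrac{1}{r!}(c_1(\cE|_S)^2 - \int_S c_2(\cE|_S))$, matching the topological expression for a vector bundle through Proposition \ref{global-basic-properties-c_2}, (2). For (4), the same reduction to a very general complete intersection surface $S$ combined with Theorem \ref{Wahl's-conj-for-quotients} yields rationality; because $X$ has only finitely many codimension-$2$ singularity types (being at most quotient in codimension $2$), the denominators of the local relative $c_2$ contributions at the singularities of $S$ are bounded uniformly in the choice of $L_i$, and a uniform $N$ follows.

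The main obstacle is item (3): one must secure a single $H \in |L_1|$ such that the reflexive symmetric power of the restriction agrees with the restriction of the reflexive symmetric power for \emph{all} $m$ at once, which is precisely why the passage to an uncountable base field is essential. A secondary subtlety is the uniform bound $N$ in (4), which requires tracking that local relative $c_2$ contributions at quotient singularities of a varying complete intersection surface come from only finitely many intrinsic singularity types of $X$.
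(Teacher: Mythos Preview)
Your approach to items (1)--(3) is essentially the paper's: the author writes only that ``Properties (1)--(3) can be proven in the same way as Theorem \ref{properties-of-ch_2}'', and you have spelled out exactly that parallel, replacing $F_X^{[m]}$ by $\Sym^{[m]}$ and invoking Theorem \ref{asymptotic-RR-quotient-surfaces} in place of Remark \ref{ch_2-on-surfaces}. This is correct.

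For (4), however, your argument is more elaborate than needed and not fully justified. The claim that ``$X$ has only finitely many codimension-$2$ singularity types'' is vague as stated, and even granting it, you would still need to argue that the local relative $c_2$ denominators at quotient singularities are bounded by an invariant of the singularity alone (e.g.\ the group order) rather than depending on the sheaf. None of this is necessary. Since $\cE$ is fixed, the map $\int_X\ch_2(\cE)$ is $\ZZ$-multilinear on $N^1(X)^{\times(n-2)}$ by (1), and $N^1(X)$ is a free $\ZZ$-module of finite rank. Hence the image is contained in the $\ZZ$-span of the finitely many values on tuples from a basis, each of which is rational by Proposition \ref{ch_2-char0}; a common denominator gives $N$. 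The paper's proof is equally terse (``it is sufficient to note that this holds on normal surfaces with only quotient singularities and use the proof of Theorem \ref{main-consequence} to reduce to this case''), but the underlying mechanism is the same: rationality on the surface plus multilinearity over a finitely generated $N^1(X)$.
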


\begin{proof}
Properties (1)--(3) can be proven in the same way as Theorem \ref{properties-of-ch_2}. To prove (4) it is sufficient to note that this holds on normal surfaces with only quotient singularities and use the proof of Theorem \ref{main-consequence} to reduce to this case.
\end{proof}

\medskip

\begin{Remark}
Let $(X,D)$ be a projective klt pair defined over an algebraically closed field of characteristic zero. In that case $X$ is well known to have quotient singularities in codimension $2$.  Then for fixed $\cE\in \Refl(\cO_X)$, the multilinear forms $\Pic X ^{\times(n-2)}\to \QQ$ given by sending $(L_1,...,L_{n-2})$ to
$c_1 (\cE)^2.L_1...L_{n-2}$, $\int _X c_2 (\cE)L_1...L_{n-2}$ or $\int _X \ch_2 (\cE)L_1...L_{n-2}$ coincide (modulo passing to numerical equivalence classes) with analogous forms considered in \cite[Theorem 3.13]{GKPT}.
This follows, e.g., from  \cite[Theorem 5.1]{La0}  and \cite[(3.13.2)]{GKPT}. The construction of these forms  in 
\cite{GKPT} uses Mumford's Chern classes for $\QQ$-bundles on $\QQ$-varieties and it does not generalize to varieties that do not have quotient singularities in codimension $2$. Moreover, this construction works well only in the characteristic zero case.
\end{Remark}

\subsection{Properties of  Chern classes in characteristic zero}

In this subsection we keep the notation form previous subsection. Then we have the following proposition analogous to Proposition \ref{global-basic-properties-c_2}.

\begin{Proposition}\label{further-properties}
For any  line bundles $L_1,..., L_{n-2}$ on $X$ the following conditions are satisfied:
\begin{enumerate}
\item For any rank $1$ reflexive coherent $\cO_X$-module $\cL$ on $X$ we have $$\int _X c_2 (\cL)L_1...L_{n-2}=0.$$
\item For any $\cE\in \Refl(\cO_X) $  and any rank $1$ reflexive coherent $\cO_X$-module $\cL$ on $X$ we have
$$ \int_X \Delta (\cE \hat \otimes \cL)L_1...L_{n-2}=\int_X \Delta (\cE) L_1...L_{n-2}.$$
\item If $f: \tilde X\to X$ is a resolution of singularities then for any vector bundle $\cF$ on $\tilde X$ we have
$$\int_{\tilde X} \Delta ( \cF)f^*L_1...f^*L_{n-2} \ge  \int_{X} \Delta ( f_{[*]}\cF) L_1...L_{n-2}. $$\item If  $ \pi: Y\to X$ is a finite morphism from a normal  projective variety $Y$ then for any $\cE\in \Refl(\cO_X) $ we have
$$\int_Yc_2 ( \pi ^{[*]}\cE) \pi^*L_1...\pi^*L_{n-2}= \deg \pi \cdot \int_Xc_2(\cE)L_1...L_{n-2}.$$
\end{enumerate}
\end{Proposition}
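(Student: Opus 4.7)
The strategy is to imitate the proof of Proposition \ref{further-properties-char-p} in positive characteristic, using Theorem \ref{properties-of-ch_2-char-0} in place of Theorem \ref{properties-of-ch_2}. The reduction pattern is the same: first cut down to the case where all $L_i$ are very ample, then base change to an uncountable algebraically closed field, and finally restrict to a very general complete intersection surface where the results of Proposition \ref{global-basic-properties-c_2} apply.

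More concretely, by multilinearity (Theorem \ref{properties-of-ch_2-char-0}, (1)) and writing each $L_i$ as $A_i\otimes B_i^{-1}$ for very ample $A_i,B_i$, we may assume $L_1,\dots,L_{n-2}$ are very ample. Base changing to an uncountable algebraically closed extension of $k$ preserves every term on both sides (the relevant Euler characteristics, degrees of $0$-cycles on complete intersection surfaces, and the $\Sym^{[m]}$-limits defining $\ch_2$ are all invariant), so we may assume $k$ is uncountable. By iterated application of Theorem \ref{properties-of-ch_2-char-0}, (3), after passing to a very general complete intersection surface $S=H_1\cap\cdots\cap H_{n-2}$ with $H_i\in|L_i|$, the left-hand sides of (1), (2), (4) become $\int_S c_2(\cL|_S)$, $\int_S\Delta(\cE|_S\hat\otimes\cL|_S)$ and $\int_{\pi^{-1}(S)}c_2(\pi^{[*]}\cE|_{\pi^{-1}(S)})$ respectively. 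By hypothesis $S$ has at most quotient singularities, and by Bertini all restrictions here are reflexive of the expected rank and compatible with $\hat\otimes$ and $\pi^{[*]}$ on a big open subset (hence everywhere, by Lemma \ref{restriction-to-open}). One then invokes the surface statements in Proposition \ref{global-basic-properties-c_2} to conclude (1), (2), (4).

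For (3) the reduction is the same but requires a Bertini argument also on $\tilde X$. Since $\tilde X$ is smooth and $f^*L_i$ is globally generated by pullbacks of sections of $L_i$, very general $H_i\in|L_i|$ can be chosen so that simultaneously: $S=H_1\cap\cdots\cap H_{n-2}$ is normal with at most quotient singularities; $\tilde S:=f^{-1}(H_1)\cap\cdots\cap f^{-1}(H_{n-2})$ is a smooth surface; $f|_{\tilde S}:\tilde S\to S$ is a resolution of singularities; and the restrictions $\cF|_{\tilde S}$ and $(f_{[*]}\cF)|_S$ are reflexive with $(f_{[*]}\cF)|_S\simeq(f|_{\tilde S})_{[*]}(\cF|_{\tilde S})$. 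Given this, the left-hand side of (3) equals $\int_{\tilde S}\Delta(\cF|_{\tilde S})$ and the right-hand side equals $\int_S\Delta((f|_{\tilde S})_{[*]}(\cF|_{\tilde S}))$, and the inequality follows from Proposition \ref{global-basic-properties-c_2}, (3) applied to the surface resolution $f|_{\tilde S}$.

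The main obstacle I expect is the compatibility step in (3): ensuring that the restriction of $f_{[*]}\cF$ to a very general $S$ coincides with the pushforward from $\tilde S$ of $\cF|_{\tilde S}$. This requires combining a Bertini-type argument for transversality of $H_i$ with the exceptional locus of $f$, a cohomology and base change argument for $R^0f_*$ and $R^1f_*$ on the flat family of hypersurfaces, and Lemma \ref{restriction-to-open} to identify reflexive hulls. The other ingredient that is slightly delicate but essentially standard is that a very general complete intersection of very ample divisors in $X$ inherits the quotient-singularity hypothesis in codimension $2$, so that Proposition \ref{global-basic-properties-c_2} applies on $S$; in dimension $2$ this follows from the assumption placed on $X$ in this subsection.
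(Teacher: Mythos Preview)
Your proposal is correct and follows essentially the same approach as the paper: the paper's proof simply says that the argument is identical to that of Proposition \ref{further-properties-char-p}, with the only difference being that the separably-generated-residue-field hypotheses on $f$ and $\pi$ are automatic in characteristic zero. Your elaboration of the reduction steps (multilinearity to reduce to very ample $L_i$, base change to an uncountable field, restriction to a very general complete intersection surface via Theorem \ref{properties-of-ch_2-char-0}(3), and then Proposition \ref{global-basic-properties-c_2}) is exactly what that reference unpacks to, and the compatibility concerns you flag for (3) and (4) are the same Bertini-type details the paper leaves implicit.
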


\begin{proof}
The proof is the same as that of Proposition \ref{further-properties-char-p} except that our assumption on 
morphisms to have separably generated residue field extensions is automatically satisfied in characteristic zero.
\end{proof}

\medskip

\begin{Remark}
In case $k$ has characteristic $0$ and $f: Y\to X$ is a quasi-\'etale morphism of klt pairs the property (4) was proven in 
\cite[Lemma 3.16]{GKPT}. Note that our assertion is much stronger as it does not require $f$ to be quasi-\'etale.
\end{Remark}

\medskip

In characteristic $0$ the Riemann--Roch type inequality analogous to that from Theorem \ref{main-consequence} is also satisfied but we will not use it in the following.

\subsection{K-theoretic formulation}\label{K-theory-reflexive}

Let $X$ be a normal projective  variety defined over an algebraically closed field $k$.
We define the \emph{Grothendieck group $K^{\refl }(X)$  of reflexive sheaves on $X$} as 
the free abelian group on the isomorphism classes $[\cE]$ of coherent reflexive $\cO_X$-modules modulo
the relations $[\cE_2]=[\cE_1]+[\cE_3]$ for each locally split short exact sequence
$$0\to \cE_1\to \cE_2\to \cE_3\to 0$$
of coherent reflexive $\cO_X$-modules. 
Clearly, there is a natural group homomorphism $K^{\refl }(X)\to K(X)$ sending a class of a reflexive sheaf to the class of the same sheaf. This homomorphism is surjective as any coherent sheaf on a normal projective variety has a finite resolution by reflexive sheaves. However, it is in general not injective as a short exact sequence of reflexive sheaves does not need to be locally split.

Note that $K^{\refl }(X)$ is a ring with unity  $[\cO_X]$ and
multiplication given by
$$[\cE_1]\cdot [\cE_2]=[\cE_1\hat{\otimes} \cE_2].$$
So we can think of $K^{\refl }(X)$ as an analogue of the Grothendieck ring of vector bundles on a smooth variety. 
We have well defined  $\ZZ$-linear maps $\ch_1: K^{\refl}(X)\to A^1(X)$, $[\cE]\to c_1(\cE)$ 
and $\chi: K^{\refl } (X)\to \ZZ$, $[\cE]\to \chi (X, \cE)$.

Let us assume that  $k$ has positive characteristic $p$. Since the functor $F_X^{[*]}$ is exact on locally split short exact sequences, we have a well defined homomorphism of rings
$$F_X^{[*]}:  K^{\refl }(X)\to  K^{\refl } (X)$$
sending $[\cE]$ to $[F_X^{[*]}\cE]$ (here we use that in the definition of $K^{\refl }(X)$ we consider only locally split short exact sequences). If $n\ge 2$ and we fix some line bundles $L_1,...,L_{n-2}$ then
$\int_X\ch_2 (\cdot)L_1...L_{n-2}$ defines a $\ZZ$-linear map $K^{\refl }(X)\to \RR$ (this can be proven 
as Corollary \ref{a-of-sum-char-p}; see \cite[Lemma 2.1]{La4}). 

If $X$ is a surface then we define the Chern character
$$\ch: K^{\refl }(X)\to A^*(X)\otimes \RR$$
by setting $\ch ([\cE]):=\rk \cE +c_1(\cE)+\ch_2(\cE)$.  Corollary \ref{a-of-sum-char-p}
and our definitions imply that this extends to a homomorphism of abelian groups. Note that $A^*(X)\otimes \RR$ is a ring and conjecturally, $\ch$ is also a homomorphism of rings (see Remark \ref{behaviour-on-tensors}).

\section{Applications}

Let $X$ be a normal projective variety of dimension $n$ defined over an algebraically closed field $k$.
Let $\cO_X (1)$ be an ample line bundle on $X$.  All the results below hold when we replace $\cO_X(1)$ by a collection of ample line bundles
but proofs become much more complicated and we deal with these results in \cite{La4}.

\medskip

\subsection{Boundedness on normal varieties}

We will often write $H$ for a Cartier divisor such that $\cO_X (1)=\cO_X(H)$. 
Let $\cE$ be a torsion free coherent  $\cO_X$-module of rank $r$. 
We will  write $H^{i}\cdot \cE $ for the class $c_1 (\cO_X(1) )^{i}\cdot \cE $ in $K(X)$.
By \cite[Chapter VI, Theorem 2.13]{Ko}
 we have
$$\chi (X, \cE (m))= \sum _{i=0}^n \chi (X, H^{i}\cdot \cE  )\binom{m+i-1}{i}.$$
There are also uniquely determined integers $a_0^H(\cE),..., a_n^H (\cE)$ such that
$$\chi (X, \cE (m))= \sum _{i=0}^n a_i^H(\cE)\binom{m+n-i}{n-i}.$$

\begin{Lemma}\label{formula-for-a_2}
We have
 $a_0^H(\cE)=rH^n$,
$$a_1^H(\cE)= c_1(\cE).H^{n-1}-\frac{r}{2}(K_X+(n-1)H).H^{n-1}-rH^n$$
and
$$a_2^H(\cE)=\chi (X, H^{n-2}\cdot  \cE )-c_1(\cE).H^{n-1}+\frac{r}{2}(K_X+(n-1)H).H^{n-1}.$$
\end{Lemma}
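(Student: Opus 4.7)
The proof combines a simple basis change, Lemma \ref{reflexive-in-K(X)}, and an asymptotic Riemann--Roch statement on $X$ derived by induction on $\dim X$.

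First, starting from Kollár's formula $\chi(X,\cE(m)) = \sum_{j=0}^n \chi(X, H^j\cdot\cE)\binom{m+j-1}{j}$ and the defining expansion $\sum_{i=0}^n a_i^H(\cE)\binom{m+n-i}{n-i}$, the hockey-stick identity $\binom{m+k}{k}=\sum_{j=0}^k\binom{m+j-1}{j}$ gives the triangular relation $\chi(X, H^j\cdot\cE)=\sum_{i=0}^{n-j}a_i^H(\cE)$, and in particular
\begin{align*}
a_0^H(\cE) &= \chi(X,H^n\cdot\cE),\\
a_1^H(\cE) &= \chi(X,H^{n-1}\cdot\cE)-\chi(X,H^n\cdot\cE),\\
a_2^H(\cE) &= \chi(X,H^{n-2}\cdot\cE)-\chi(X,H^{n-1}\cdot\cE).
\end{align*}

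Next I would reduce the computation of $\chi(X, H^n\cdot\cE)$ and $\chi(X, H^{n-1}\cdot\cE)$ to those for $\cO_X$. By Lemma \ref{reflexive-in-K(X)}, $\cE \equiv r\,[\cO_X]-[\cO_{-c_1(\cE)}]\pmod{K_{n-2}(X)}$. Since multiplication by $c_1(\cO_X(1))$ lowers the dimension filtration by one (Lemma \ref{Kollar}(1)) and $[\cO_D]\in K_{n-1}(X)$ by Lemma \ref{difference}, both $H^{n-1}\cdot K_{n-2}(X)$ and $H^n\cdot[\cO_D]$ lie in $K_{-1}(X)=0$. Taking Euler characteristics and using $\chi(X,H^{n-1}\cdot[\cO_D])=D\cdot H^{n-1}$ (built into the definition of the intersection number in Subsection \ref{Section:intersection}) yields
\[
\chi(X,H^n\cdot\cE)=r\,\chi(X,H^n\cdot\cO_X),\qquad \chi(X,H^{n-1}\cdot\cE)=r\,\chi(X,H^{n-1}\cdot\cO_X)+c_1(\cE)\cdot H^{n-1}.
\]

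The remaining task is to show $\chi(X,H^n\cdot\cO_X)=H^n$ and $\chi(X,H^{n-1}\cdot\cO_X)=-\tfrac{1}{2}(K_X+(n-1)H)\cdot H^{n-1}$. A short finite-difference calculation (expressing these as $\Delta_-^{n}P(0)$ and $\Delta_-^{n-1}P(0)$ for $P(m)=\chi(X,\cO_X(m))$) shows that they depend only on the top two coefficients of the Hilbert polynomial $P$. It therefore suffices to prove
$$P(m)=\tfrac{H^n}{n!}m^n+\tfrac{-K_X\cdot H^{n-1}}{2(n-1)!}m^{n-1}+O(m^{n-2}).$$
This I would establish by induction on $n$. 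The base case $n=2$ is immediate from Theorem \ref{RR-theorem-easy} applied to $\cO_X(m)$: indeed $c_2(\cO_X(m))=0$, and each local defect $a(x,\cO_X(m))$ equals $a(x,\cO_X)$ because line bundles on a henselian local ring are trivial. For $n>2$, I would pass to a sufficiently high multiple $\ell H$ so that $|\ell H|$ defines a closed embedding of $X$ into projective space (hence has separably generated residue field extensions), then invoke Theorem \ref{Bertini} to pick a general normal integral section $Y\in|\ell H|$; the exact sequence
\[
0\to\cO_X((m-\ell)H)\to\cO_X(mH)\to\cO_Y(mH|_Y)\to 0
\]
combined with the adjunction $K_Y=(K_X+\ell H)|_Y$ and the inductive hypothesis on $Y$ pins down the top two coefficients of $P_X$ by matching them on both sides of $P_X(m)-P_X(m-\ell)=P_Y(m)$ (the powers of $\ell$ cancel as expected).

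Combining these three steps yields the claimed formulas for $a_0^H, a_1^H, a_2^H$. The main obstacle is the last step: on a normal variety that need not be Cohen--Macaulay, the sub-leading Hilbert coefficient is not automatic from Fulton's singular Riemann--Roch, and the inductive slicing via Theorem \ref{Bertini}, together with the passage to a very ample multiple of $H$ to secure Bertini in positive characteristic, is where the real content of the proof lies.
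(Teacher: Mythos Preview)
Your proof is correct, but it is considerably more elaborate than the paper's. The paper proceeds by a single Bertini-type slicing: it chooses general $H_1,\ldots,H_n\in|\cO_X(1)|$ so that each partial intersection $X_i=\bigcap_{j\le i}H_j$ is normal and $\cE|_{X_i}$ is torsion free of rank $r$; then $H^i\cdot\cE=\cE|_{X_i}$ in $K(X)$, so that $a_0^H(\cE)=\chi(\cE|_{X_n})$ and $a_i^H(\cE)=\chi(\cE|_{X_{n-i}})-\chi(\cE|_{X_{n-i+1}})$. Since $C=X_{n-1}$ is a smooth curve, classical Riemann--Roch gives $\chi(\cE|_C)=c_1(\cE).H^{n-1}+r\chi(\cO_C)$, and adjunction gives $\chi(\cO_C)=-\tfrac12(K_X+(n-1)H).H^{n-1}$; the formula for $a_2^H$ then falls out from $a_2^H(\cE)=\chi(X,H^{n-2}\cdot\cE)-(a_0^H(\cE)+a_1^H(\cE))$. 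Your route---first stripping off $\cE$ via Lemma~\ref{reflexive-in-K(X)} to reduce to $\cO_X$, and then establishing the two top Hilbert coefficients of $\cO_X$ by an induction on $n$ with base case Theorem~\ref{RR-theorem-easy}---reaches the same destination but with substantially more machinery, including an appeal to the surface Riemann--Roch developed later in the paper. One genuine advantage of your argument is that you handle the case where $\cO_X(1)$ is merely ample (not very ample) in positive characteristic by passing to a very ample multiple $\ell H$ before invoking Bertini; the paper's proof, as written, tacitly requires that general members of $|\cO_X(1)|$ itself are normal, which in characteristic $p$ is not guaranteed without a separability hypothesis.
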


\begin{proof}
Using  Theorem \ref{Bertini} and  \cite[Lemma 1.1.12 and Corollary 1.1.14]{HL}) we can construct a sequence  of divisors $H_1,..., H_n\in |\cO_X (1)|$ such that for all $i=1,...,n$ the following conditions are satisfied:
\begin{enumerate}
\item the intersection  $\bigcap _{j\le i} H_j $ is normal,
\item the restriction $\cE |_{\bigcap _{j\le i} H_j }$ is torsion free of rank $r$. 
\end{enumerate}
From the definition we see that
$a_0 ^H(\cE)= \chi (\cE |_{\bigcap _{j\le n} H_j} )$
and
$$a_i ^H(\cE)= \chi (\cE |_{\bigcap _{j\le n-i} H_j} )- \chi (\cE |_{\bigcap _{j\le n-i+1} H_j} )$$
for $i>0$.  So the equality  $a_0^H(\cE)=rH^n$ is clear.  
By the above the curve $C:={\bigcap _{j\le n-1} H_j} $ is smooth and by the adjunction formula we have 
$$-2\chi (\cO_C)=\deg \omega _C=\deg \cO_C(K_X+H_1+...+H_{n-1})=(K_X+(n-1)H).H^{n-1}.$$
This, together with the Riemann--Roch theorem for $\cE |_C$, gives the formula for  $a_1^H(\cE)$.
 The last formula follows easily from the previous two formulas and equality 
 $a_2 ^H(\cE)= \chi (\cE |_{\bigcap _{j\le n-2} H_j} )- (a_0^H(\cE)+a_1^H(\cE))$.
\end{proof}

\medskip

We define a \emph{slope} of $\cE$ with respect to $H$ as
$$\mu _H(\cE) :=\frac{c_1(\cE).H^{n-1}}{r}.$$
This allows us to define slope $H$-semistability and the maximal $H$-destabilizing slope $\mu _{\max, H} (\cE)$ 
for any coherent torsion free  $\cO_X$-module $\cE$.

\medskip

Let us recall the following  special case of \cite[Theorem 4.4]{La1}.

\begin{Theorem} \label{boundedness-1}
Let $(X, H)$ be as above and let us fix some integers  $a_0$, $a_1$, $a_2$ and a rational number $\mu _{\max}$.
Then the set of coherent reflexive  $\cO_X$-modules $\cE$ with $a_0 ^H(\cE)=a_0$, $a_1^H (\cE)=a_1$,
$a_2 ^H(\cE)\ge a_2$ and $\mu _{\max ,H } (\cE)\le \mu _{\max}$ is bounded.
\end{Theorem}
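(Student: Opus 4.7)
The plan is to reduce Theorem \ref{boundedness-1} to Kleiman's boundedness criterion via a Le Potier--Simpson type upper bound on $h^0(\cE(m))$, following the overall strategy of Langer's boundedness theorems for semistable sheaves. First I would repackage the hypotheses in Chern-class language. Lemma \ref{formula-for-a_2} identifies $a_0^H(\cE) = rH^n$ and expresses $a_1^H(\cE)$ through $c_1(\cE).H^{n-1}$, so the rank $r$ and the intersection number $c_1(\cE).H^{n-1}$ are already fixed. Applied to a general complete intersection surface cut by members of $|\cO_X(1)|$, the Riemann--Roch inequality of Theorem \ref{main-consequence} converts the bound $a_2^H(\cE)\geq a_2$ into a uniform upper bound on $\int_X \ch_2(\cE) H^{n-2}$, and hence on the discriminant $\int_X \Delta(\cE) H^{n-2}$. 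So the hypotheses become: fixed rank, fixed $c_1(\cE).H^{n-1}$, bounded $\int_X \Delta(\cE) H^{n-2}$ from above, and bounded $\mu_{\max,H}(\cE)$.

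Next I would establish a Le Potier--Simpson type bound
$$h^0(X,\cF) \leq C_1\, r'\, [\max(\mu_{\max,H}(\cF),0)+C_2]^n$$
for every torsion-free $\cF$ of rank $r'$, with $C_1,C_2$ depending only on $(X,H)$ and the discriminant bound. The proof is by induction on $n$, restricting to a general hyperplane $H_0 \in |\cO_X(1)|$ that is normal by Theorem \ref{Bertini} and on which $\cF|_{H_0}$ remains torsion-free. The inductive step requires a restriction theorem bounding $\mu_{\max}(\cF|_{H_0})$ in terms of $\mu_{\max}(\cF)$ plus a correction controlled by the discriminant, which in turn rests on Bogomolov's inequality on a very general complete intersection surface; the latter is available via Proposition \ref{global-basic-properties-c_2} together with the classical smooth case.

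Applying this estimate to suitable twists $\cE(m)$ and to the Harder--Narasimhan quotients of $\cE(m)$ (which have uniformly bounded $\mu_{\max}$ because $\mu_{\max,H}(\cE)$ is) yields a uniform bound $h^0(X,\cE(m))\leq P(m)$ for a polynomial $P$ depending only on the fixed invariants. The same inductive restriction procedure shows that all further coefficients $a_i^H(\cE)$ with $i\geq 3$ lie in a finite set, so the Hilbert polynomials $\chi(X,\cE(m))$ take finitely many values; Kleiman's boundedness criterion then delivers the theorem.

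The hardest step is the restriction theorem controlling $\mu_{\max}$ under hyperplane restriction that underpins the Le Potier--Simpson estimate. In positive characteristic it has to be bootstrapped from the strongly semistable case via Frobenius pullbacks, as in Langer's original argument for smooth varieties, using the Bogomolov-type inequality for $\Delta$ on normal surfaces; the compatibility of $\Delta$ with restriction to general complete intersection surfaces, established in Section 5 (Theorem \ref{properties-of-ch_2}(3) and Proposition \ref{further-properties-char-p}), is precisely what makes the singular setting accessible.
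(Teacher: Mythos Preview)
The paper does not prove Theorem \ref{boundedness-1} at all: it is merely quoted as ``the following special case of \cite[Theorem 4.4]{La1}''. So there is no ``paper's own proof'' to compare against; what you have written is an attempt to reprove an external result using the machinery of the present paper.

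That attempt has two genuine problems. First, you invoke Theorem \ref{main-consequence} to pass from the bound on $a_2^H(\cE)$ to a bound on $\int_X\ch_2(\cE)H^{n-2}$, but Theorem \ref{main-consequence} is proved in Section~5.2 under the standing assumption that $k$ has positive characteristic, whereas Theorem \ref{boundedness-1} is stated in Section~6.1 with no characteristic hypothesis. Second, and more seriously, the restriction-theorem step you flag as ``hardest'' needs a Bogomolov-type inequality as input, and the versions available in this paper are circular for your purposes: Theorem \ref{Bogomolov's-inequality} and Corollary \ref{Bogomolov-strongly-ss} are \emph{deduced from} Theorem \ref{boundedness-3}, which via Corollary \ref{boundedness-2} is nothing but a reformulation of Theorem \ref{boundedness-1}. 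Your suggested workaround, Proposition \ref{global-basic-properties-c_2}(3) ``together with the classical smooth case'', does not close the gap either: that inequality compares $\Delta$ of a bundle on a resolution with $\Delta$ of its reflexive pushforward, and to feed the smooth Bogomolov inequality into it you would need semistability of $f^{[*]}\cE$ on the resolution, which is neither assumed nor established.

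The overall shape you describe (Le Potier--Simpson estimate, restriction theorem, Kleiman's criterion) \emph{is} the architecture of \cite[Theorem 4.4]{La1}, but there the argument is run directly with the Hilbert-polynomial coefficients $a_i^H$ on arbitrary projective schemes and does not touch the singular Chern-class invariants $\int_X\Delta(\cE)H^{n-2}$ developed here; routing through those invariants is what creates the circularity.
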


Using Lemma \ref{formula-for-a_2} we can rewrite the above theorem in the following way:

\begin{Corollary}\label{boundedness-2}
Let  us fix some positive integer $r$, integers $c_1$ and $\chi$ and some rational 
number $\mu _{\max}$. Then the set of coherent reflexive $\cO_X$-modules $\cE$ of rank $r$ with
$c_1 (\cE). H^{n-1}=c_1$,  $\chi (X, H^{n-2}\cdot \cE )\ge \chi$ 
and $\mu _{\max, H} (\cE)\le \mu _{\max}$ is bounded. In particular, the set of slope $H$-semistable coherent reflexive  $\cO_X$-modules $\cE$ of rank $r$ with $c_1 (\cE). H^{n-1}=c_1$
and  $\chi (X, H^{n-2}\cdot \cE )\ge \chi$ 
is bounded.
\end{Corollary}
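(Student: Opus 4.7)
The plan is to apply Theorem \ref{boundedness-1} after using Lemma \ref{formula-for-a_2} as a dictionary to translate the hypotheses of Corollary \ref{boundedness-2} (which are phrased in terms of $r$, $c_1(\cE).H^{n-1}$, $\chi(X, H^{n-2}\cdot \cE)$ and $\mu_{\max,H}(\cE)$) into hypotheses about $a_0^H(\cE)$, $a_1^H(\cE)$, $a_2^H(\cE)$ and $\mu_{\max,H}(\cE)$.

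First I would fix $r$, $c_1$, $\chi$ and $\mu_{\max}$ as in the statement, and let $\cE$ be any coherent reflexive $\cO_X$-module satisfying the three numerical conditions. By the first formula of Lemma \ref{formula-for-a_2} the invariant $a_0^H(\cE) = rH^n$ is constant on our family since $r$ is fixed. The second formula shows that $a_1^H(\cE) = c_1 - \tfrac{r}{2}(K_X + (n-1)H).H^{n-1} - rH^n$, whose value depends only on $r$, $c_1$ and on invariants of $(X,H)$; hence $a_1^H(\cE)$ is also a fixed integer. For the third invariant, the formula
\[
a_2^H(\cE) = \chi(X, H^{n-2}\cdot \cE) - c_1(\cE).H^{n-1} + \tfrac{r}{2}(K_X + (n-1)H).H^{n-1}
\]
together with the hypothesis $\chi(X, H^{n-2}\cdot \cE) \ge \chi$ and $c_1(\cE).H^{n-1} = c_1$ yields a uniform lower bound $a_2^H(\cE) \ge \chi - c_1 + \tfrac{r}{2}(K_X + (n-1)H).H^{n-1}$.

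With these translations in hand, the family of sheaves $\cE$ in the statement is contained in the family considered in Theorem \ref{boundedness-1}, with $a_0 = rH^n$, $a_1 = c_1 - \tfrac{r}{2}(K_X + (n-1)H).H^{n-1} - rH^n$, $a_2 = \chi - c_1 + \tfrac{r}{2}(K_X + (n-1)H).H^{n-1}$ and the prescribed $\mu_{\max}$. Thus Theorem \ref{boundedness-1} immediately gives the asserted boundedness. For the "in particular" clause, I would simply note that if $\cE$ is slope $H$-semistable then $\mu_{\max,H}(\cE) = \mu_H(\cE) = c_1/r$, which is determined by the fixed data; so the slope-boundedness hypothesis is automatic and the first part applies with $\mu_{\max} := c_1/r$.

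There is no real obstacle: the whole argument is a bookkeeping step translating the natural invariants $(r, c_1.H^{n-1}, \chi(X, H^{n-2}\cdot \cE))$ into the K-theoretic invariants $(a_0^H, a_1^H, a_2^H)$ through Lemma \ref{formula-for-a_2}. The only point that requires a little care is to verify that the lower bound hypothesis on $\chi$ translates into a lower (not two-sided) bound on $a_2^H$, which is exactly what Theorem \ref{boundedness-1} requires; this matches because the third formula of Lemma \ref{formula-for-a_2} differs from $\chi(X, H^{n-2}\cdot \cE)$ only by a constant once $r$ and $c_1(\cE).H^{n-1}$ are fixed.
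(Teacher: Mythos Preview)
Your proposal is correct and is precisely the argument the paper has in mind: the paper's own proof is the single sentence ``Using Lemma \ref{formula-for-a_2} we can rewrite the above theorem in the following way,'' and you have simply spelled out this translation in detail. Your handling of the ``in particular'' clause is also standard and correct.
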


\medskip

\subsection{Boundedness and Bogomolov's inequality on normal varieties in positive characteristic}

In this subsection we assume that the base field $k$ has positive characteristic. 
In the following, we write $N_H(X)$ for the group
$N_L(X)$ introduced in Subsection \ref{intersecting-2-Weil} in the case of $(L_1,...,L_{n-2})=(\cO_X (1),...,\cO_X (1))$.

\begin{Theorem} \label{boundedness-3}
Let us fix some positive integer $r$  and some real
numbers $c_2$ and $\mu _{\max}$. Let us also fix some class $c_1\in N_H(X)$.
Then the set $\cA$ of coherent reflexive $\cO_X$-modules $\cE$ of rank $r$ with $[c_1(\cE)]=c_1\in N_H(X)$,
$\int _X c_2 (\cE) H^{n-2}\le c_2$  and $\mu _{\max ,H } (\cE)\le \mu _{\max}$
is bounded.
\end{Theorem}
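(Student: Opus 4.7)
The plan is to reduce Theorem~\ref{boundedness-3} to Corollary~\ref{boundedness-2} by using the Riemann--Roch type inequality of Theorem~\ref{main-consequence} to turn the hypothesis $\int_X c_2(\cE) H^{n-2}\le c_2$ into the numerical lower bound $\chi(X, H^{n-2}\cdot \cE)\ge \chi$ required by that corollary. Since we already assume fixed rank, fixed $\mu_{\max,H}$, and a fixed class $c_1\in N_H(X)$, essentially every intersection-theoretic ingredient appearing in the Riemann--Roch formula, other than $c_2(\cE)$ itself, is controlled.

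First I would apply Theorem~\ref{main-consequence} in the special case $L_1=\dots=L_{n-2}=\cO_X(H)$. This yields a constant $C$ (depending only on $(X,H)$) such that for every $\cE\in\cA$
\begin{align*}
\chi(X,H^{n-2}\cdot\cE) &\ge r\,\chi(X,H^{n-2}\cdot\cO_X) +\tfrac{1}{2}c_1(\cE)^2.H^{n-2}-\int_X c_2(\cE)\,H^{n-2}\\
&\quad -\tfrac{1}{2}c_1(\cE).K_X.H^{n-2}-\tfrac{n-2}{2}c_1(\cE).H^{n-1}-Cr^2.
\end{align*}
Next I would observe that each of the intersection numbers on the right that involves $c_1(\cE)$ depends only on the class $[c_1(\cE)]\in N_H(X)$. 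Indeed, by definition $N_H(X)$ is the quotient of $A^1(X)$ by the radical of the pairing $(D_1,D_2)\mapsto D_1.D_2.H^{n-2}$ (see Subsection~\ref{intersecting-2-Weil}); since $K_X$ and $H$ are themselves Weil divisors on the normal variety $X$, each of $c_1(\cE)^2.H^{n-2}$, $c_1(\cE).K_X.H^{n-2}$ and $c_1(\cE).H^{n-1}$ is a value of this pairing (or a linear combination thereof) with a fixed second argument, hence is determined by the fixed class $c_1\in N_H(X)$.

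Combining these observations with the fixed rank $r$, the fixed Euler characteristic $\chi(X,H^{n-2}\cdot\cO_X)$ and the assumed upper bound $\int_X c_2(\cE)H^{n-2}\le c_2$, the displayed inequality gives a constant $\chi_0$ depending only on the data $(X,H,r,c_1,c_2)$ such that
\[
\chi(X,H^{n-2}\cdot\cE)\ge \chi_0
\]
for every $\cE\in\cA$. Moreover $c_1(\cE).H^{n-1}$ is a fixed integer because $[c_1(\cE)]=c_1$ in $N_H(X)$. All the hypotheses of Corollary~\ref{boundedness-2} are therefore satisfied, and boundedness of $\cA$ follows.

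The main subtlety, and the only place where characteristic $p$ enters, is the use of Theorem~\ref{main-consequence}: it is precisely the Frobenius-based definition of $\int_X c_2(\cE) H^{n-2}$ together with the asymptotic Riemann--Roch bound on normal surfaces (Theorem~\ref{bound-on-a(x,E)}, transported to higher dimensions by restriction to a general complete intersection surface) that yields the bound $Cr^2$ with $r$ fixed. Everything else is bookkeeping inside the finitely generated free abelian group $N_H(X)$, using Lemma~\ref{finite-generation} to know that $c_1(\cE)^2.H^{n-2}$ and $c_1(\cE).D.H^{n-2}$ are well-defined from the class $c_1$ alone.
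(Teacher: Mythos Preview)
Your proof is correct and follows essentially the same approach as the paper: apply Theorem~\ref{main-consequence} with $L_1=\dots=L_{n-2}=\cO_X(H)$ to convert the upper bound on $\int_X c_2(\cE)H^{n-2}$ into a lower bound on $\chi(X,H^{n-2}\cdot\cE)$, then invoke Corollary~\ref{boundedness-2}. Your version is simply more explicit about why the intersection numbers involving $c_1(\cE)$ depend only on the class in $N_H(X)$; note also that the paper's displayed inequality appears to have a sign typo (it shows $\le$ where $\ge$ is intended), whereas your inequality has the correct direction.
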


\begin{proof}
By Theorem \ref{main-consequence} we have
\begin{align*}
&\chi (X, H^{n-2} \cdot \cE) \le \frac{1}{2}   c_1.(c_1-(K_X+(n-2)H)). H^{n-2} 
 -\int _X c _2 (\cE)H^{n-2}   \\
 &  +r\chi (X, H^{n-2} \cdot \cO_X) + C\cdot r.
    \end{align*}
So our assertion follows from Corollary \ref{boundedness-2}.
\end{proof}

\medskip

The proof of the following theorem was motivated by a similar proof by Maruyama in the smooth case (see the proof of 
\cite[Corollary 2.10]{Ma}).

\begin{Theorem}\label{Bogomolov's-inequality}
Let us fix some positive integer $r$ and some non-negative rational number $\alpha$.
There exists some constant $\tilde C=\tilde C(X,H,r,\alpha)$ depending only on $X$, $H$, $r$ and $\alpha$ such that for every coherent
reflexive    $\cO_X$-module $\cE$ of rank $r$ with
$\mu _{\max ,H } (\cE)-\mu _H (\cE)\le \alpha$ we have
$$\int _X \Delta (\cE) H^{n-2}\ge \tilde C.$$
\end{Theorem}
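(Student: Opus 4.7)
The plan is to argue by contradiction, using Theorem \ref{boundedness-3} together with the Riemann--Roch inequality of Theorem \ref{main-consequence}. Suppose we have a sequence $(\cE_m)_{m\ge 1}$ of rank-$r$ coherent reflexive $\cO_X$-modules with $\mu_{\max,H}(\cE_m) - \mu_H(\cE_m)\le \alpha$ but $\int_X \Delta(\cE_m) H^{n-2}\to -\infty$, and aim for a contradiction.

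First I would reduce to the case where $c_1(\cE_m)\in N_H(X)$ is a single fixed class. By Lemma \ref{finite-generation}, $N_H(X)$ is a finitely generated free $\ZZ$-module, so $N_H(X)/rN_H(X)$ is finite; fix once and for all a system $c_0^{(1)},\dots,c_0^{(N)}\in N_H(X)$ of coset representatives. For any $L\in \Pic X$ the twist $\cE\otimes L$ is again reflexive of rank $r$ with $c_1(\cE\otimes L) = c_1(\cE) + r\,c_1(L)$, with the same slope difference $\mu_{\max,H} - \mu_H$ and the same $\int_X \Delta H^{n-2}$ by Proposition \ref{further-properties-char-p}(2). Since $\Pic X \to N_H(X)$ is surjective, one may pick $L_m\in \Pic X$ so that the class of $c_1(\cE_m\otimes L_m)$ in $N_H(X)$ lies in $\{c_0^{(1)},\dots,c_0^{(N)}\}$; replacing $\cE_m$ by these twists and passing to a subsequence, we may assume $c_1(\cE_m) = c_0\in N_H(X)$ for every $m$. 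In particular $\mu_H(\cE_m) = c_0\cdot H^{n-1}/r$ and $\mu_{\max,H}(\cE_m)\le c_0\cdot H^{n-1}/r + \alpha$ are uniformly bounded.

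With $c_0^2 H^{n-2}$ now a fixed constant, the identity $\int_X \Delta(\cE_m) H^{n-2} = 2r\int_X c_2(\cE_m)H^{n-2} - (r-1)c_0^2 H^{n-2}$ forces $\int_X c_2(\cE_m) H^{n-2}\to -\infty$. Hence for any constant $C_0$ the tail of the sequence satisfies the hypotheses of Theorem \ref{boundedness-3} with fixed $c_0$, $\mu_{\max}$-bound $c_0\cdot H^{n-1}/r + \alpha$ and $c_2$-bound $C_0$, and so lies in a bounded family of coherent reflexive $\cO_X$-modules. In any bounded family the Hilbert polynomials take only finitely many values, so by Lemma \ref{formula-for-a_2} the Euler characteristic $\chi(X, H^{n-2}\cdot\cE_m)$ also takes finitely many values. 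Theorem \ref{main-consequence} (applied with all $L_i = \cO_X(1)$), rearranged using $\ch_2 = \tfrac12 c_1^2 - c_2$, now gives
\[
\int_X c_2(\cE_m) H^{n-2} \ge \tfrac12 c_0^2 H^{n-2} - \chi(X, H^{n-2}\cdot\cE_m) + r\,\chi(X, H^{n-2}\cdot\cO_X) - \tfrac12 c_0.(K_X + (n-2)H).H^{n-2} - Cr^2,
\]
whose right-hand side lies in a finite set and is therefore bounded below. This contradicts $\int_X c_2(\cE_m) H^{n-2}\to -\infty$.

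The main technical obstacle, and the genuinely new point relative to Maruyama's smooth-variety argument, is the first step: twisting to normalize $c_1(\cE)$ as a class in $N_H(X)$ rather than merely its single intersection number $c_1.H^{n-1}$. On a smooth variety one can manage with twists by powers of the polarization because the relevant boundedness is numerical in a single direction; for a normal variety $N_H(X)$ may have rank larger than one, and it is the finite generation of $N_H(X)$ from Lemma \ref{finite-generation}, combined with the $r$-divisibility visible in the formula $c_1(\cE\otimes L) = c_1(\cE) + r\,c_1(L)$, that lets one cut down to finitely many normalized Chern classes. Once this reduction is in place, the rest of the argument is a clean application of the boundedness and Riemann--Roch machinery developed earlier in the paper.
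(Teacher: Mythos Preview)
Your argument follows the same strategy as the paper's: normalize $c_1(\cE)$ into finitely many residue classes by twisting, then combine the boundedness result (Theorem~\ref{boundedness-3}) with the Riemann--Roch inequality (Theorem~\ref{main-consequence}) to bound $\int_X\Delta(\cE)H^{n-2}$ from below. The paper argues directly with a case split on the sign of $\int_Xc_2(\cE)H^{n-2}$ rather than by contradiction, but this is cosmetic.

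There is, however, a genuine gap in your normalization step. You assert that $\Pic X\to N_H(X)$ is surjective, but $N_H(X)$ is by construction a quotient of $A^1(X)$, the group of \emph{Weil} divisor classes, and on a normal variety the image of $\Pic X$ in $A^1(X)$ may be a proper subgroup whose image in $N_H(X)$ need not be all of $N_H(X)$, nor even of finite index; nothing in the paper establishes such a statement. The paper sidesteps this by choosing a $\ZZ$-basis $L_1,\dots,L_s$ of $N_H(X)$ represented by Weil divisors and twisting by the rank-$1$ \emph{reflexive} sheaf $\cO_X(-\sum q_iL_i)$ via the reflexive tensor product $\hat\otimes$, invoking Proposition~\ref{further-properties-char-p}(2) for the invariance of $\Delta$. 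Your argument is repaired verbatim once you replace ``$L\in\Pic X$, $\cE\otimes L$'' by ``$D\in Z^1(X)$, $\cE\hat\otimes\cO_X(D)$''; the identity $c_1(\cE\hat\otimes\cO_X(D))=c_1(\cE)+rD$ in $A^1(X)$ and the invariance of $\mu_{\max,H}-\mu_H$ under such twists are checked on the big open regular locus where $\cO_X(D)$ is a line bundle.
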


\begin{proof}
Let us choose a basis $L_1,....,L_s$ of $N_H (X)$ as a $\ZZ$-module (see Lemma \ref{finite-generation})
and let us write $[c_1 (\cE)]= \sum a_i L_i$ for some $a_i\in \ZZ$. There exist  uniquely determined integers $q_i$
and $r_i$ such that $a_i=q_i r+r_i$ and $0\le r_i <r$.
Since 
$$\int _X \Delta (\cE) H^{n-2}= \int _X \Delta (\cE (-\sum q_i L_i)) H^{n-2}$$
and there are only finitely many possibilities for  $[c_1 (\cE (-\sum q_i L_i))]= \sum r_i L_i$, it is sufficient to prove existence of the above constant assuming that $c_1=[c_1(\cE)]\in N_H (X)$ is fixed.

If $\int _X c_2 (\cE) H^{n-2}\ge 0$ then  $\int _X \Delta (\cE) H^{n-2}\ge - (r-1)c_1^2.H^{n-2}$.

On the other hand, by Theorem \ref{boundedness-3}  the set $\cA$ of reflexive coherent $\cO_X$-modules $\cE$ of rank $r$ with $[c_1(\cE)]=c_1\in N_H(X)$, $\int _X c_2 (\cE) H^{n-2}\le 0$  and $$\mu _{\max ,H } (\cE)\le \mu _{\max}= \alpha+\frac{1}{r}c_1.H^{n-1}$$ is bounded. So there exists some constant $D$ such that for every $\cE\in \cA$ we have
and any $\cE$-regular sequence $H_1,..., H_{n-2}\in |\cO_X (1)|$ we have
$$\chi (X, H^{n-2}\cdot \cE)=\chi (X, \cE |_{\bigcap _{j\le n-2} H_j })\le D.$$
But Theorem \ref{main-consequence} gives
\begin{align*}
\chi (X, H^{n-2} \cdot \cE) \ge &\frac{1}{2r}   c_1.(c_1-r(K_X+(n-2)H)). H^{n-2} 
 -\frac{1}{2r}\int _X \Delta (\cE)H^{n-2}   \\
 &  +r\chi (X, H^{n-2} \cdot \cO_X) - C\cdot r^2.
    \end{align*}
Therefore for any $\cE\in \cA$ we have
\begin{align*}
\int _X \Delta (\cE) H^{n-2}\ge \max &(- (r-1)c_1^2.H^{n-2} ,c_1 . (c_1 -r(K_X+(n-2)H).H^{n-2}\\
&+ 2r^2\chi (X, H^{n-2} \cdot \cO_X )-2C\cdot r^3 +2D\cdot r ).
\end{align*}
\end{proof}

The above theorem easily implies Bogomolov's inequality for strongly semistable reflexive sheaves:

\begin{Corollary}\label{Bogomolov-strongly-ss}
Let $\cE$ be a coherent reflexive $\cO_X$-module. If $\cE$ is strongly slope $H$-semistable
then 
$$\int _X \Delta (\cE)H^{n-2}\ge 0.$$
\end{Corollary}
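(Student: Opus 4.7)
The plan is to apply the general Bogomolov-type inequality of Theorem \ref{Bogomolov's-inequality} to the iterated Frobenius pullbacks $F_X^{[m]*}\cE$ and then let $m\to\infty$, exploiting that the discriminant scales by $p^{2m}$ under Frobenius while the constant $\tilde C(X,H,r,0)$ on the right stays fixed.

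First I would establish the scaling law $\int_X \Delta(F_X^{[m]*}\cE) H^{n-2} = p^{2m} \int_X \Delta(\cE)H^{n-2}$. The first Chern class satisfies $c_1(F_X^{[m]*}\cE) = p^m c_1(\cE)$, so $c_1(F_X^{[m]*}\cE)^2.H^{n-2} = p^{2m} c_1(\cE)^2.H^{n-2}$. For $c_2$, I apply Proposition \ref{further-properties-char-p}(4) to the Frobenius morphism $F_X$, which has degree $p^n$ and satisfies $F_X^*H = pH$; this gives
\[
p^{n-2}\int_X c_2(F_X^{[*]}\cE) H^{n-2} = p^n \int_X c_2(\cE)H^{n-2},
\]
hence $\int_X c_2(F_X^{[*]}\cE) H^{n-2} = p^2 \int_X c_2(\cE)H^{n-2}$, and iterating gives the $p^{2m}$ factor. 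Combining these yields the scaling of $\Delta$.

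Next, since $\cE$ is strongly slope $H$-semistable, each $F_X^{[m]*}\cE$ is slope $H$-semistable, so $\mu_{\max,H}(F_X^{[m]*}\cE) - \mu_H(F_X^{[m]*}\cE) = 0$. Note also that $F_X^{[m]*}\cE$ is reflexive of the same rank $r$ as $\cE$. Hence Theorem \ref{Bogomolov's-inequality}, applied with parameters $r$ and $\alpha = 0$, yields a single constant $\tilde C = \tilde C(X,H,r,0)$ (independent of $m$) with
\[
\int_X \Delta(F_X^{[m]*}\cE) H^{n-2} \ge \tilde C.
\]
Combining with the scaling gives $p^{2m} \int_X \Delta(\cE)H^{n-2} \ge \tilde C$ for every $m$; dividing by $p^{2m}$ and letting $m\to\infty$ forces $\int_X \Delta(\cE)H^{n-2} \ge 0$.

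There is essentially no obstacle once one has Theorem \ref{Bogomolov's-inequality} and the correct scaling identity, both of which are already available. The only point requiring a little care is verifying the Frobenius scaling of $\int_X c_2(\cdot) H^{n-2}$, since Proposition \ref{further-properties-char-p}(4) is stated for $\pi^*L_i$ on the left-hand side and one must account for the factor $p^{n-2}$ coming from $F_X^*H = pH$ before extracting the clean factor $p^2$.
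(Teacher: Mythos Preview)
Your proof is correct and follows essentially the same approach as the paper's: apply Theorem \ref{Bogomolov's-inequality} with $\alpha=0$ to the family $\{F_X^{[m]}\cE\}_{m\ge 0}$, use the scaling $\int_X \Delta(F_X^{[m]}\cE)H^{n-2}=p^{2m}\int_X\Delta(\cE)H^{n-2}$, and pass to the limit. The only difference is cosmetic: the paper simply asserts the scaling identity inline (it is also recorded directly in Remark \ref{determinacy-c_2-higher-dim}), whereas you spell out its derivation from Proposition \ref{further-properties-char-p}(4) and multilinearity.
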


\begin{proof}
Let us consider the set $\{ F_X^{[m]}\cE \} _{m\ge 0}$. Note that by assumption  each sheaf in this set is
reflexive of rank $r$ and slope $H$-semistable, i.e., $\mu _{\max ,H } (F_X^{[m]}\cE)-\mu _H (F_X^{[m]}\cE)=0$.
So by Theorem \ref{Bogomolov's-inequality} there exists a constant $\tilde C$ such that
$$\int _X \Delta (F_X^{[m]}\cE) H^{n-2}  =p^{2m} \int _X \Delta (\cE) H^{n-2}\ge \tilde C$$
for all $m\ge 0$. Dividing by $p^{2m}$ and passing with $m$ to infinity, we get the required inequality.
\end{proof}

\begin{Corollary} \label{boundedness-4}
Let us fix some positive integer $r$, integer $\ch_1$ and some real
numbers $\ch _2$ and $\mu _{\max}$. 
Then the set $\cB$ of coherent reflexive $\cO_X$-modules $\cE$ of rank $r$ with $\int _X \ch_1 (\cE). H^{n-1}=\ch_1$, $\int _X \ch_2 (\cE). H^{n-2}\ge \ch_2$  and $\mu _{\max ,H } (\cE)\le \mu _{\max}$ is bounded.
\end{Corollary}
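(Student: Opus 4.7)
The plan is to reduce Corollary \ref{boundedness-4} to Theorem \ref{boundedness-3} by showing that the hypotheses force $[c_1(\cE)]$ into a finite subset of $N_H(X)$, and then applying Theorem \ref{boundedness-3} to each such numerical class separately.

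First I rewrite the hypothesis $\int_X \ch_2(\cE).H^{n-2} \ge \ch_2$, using $\ch_2(\cE) = \frac{1}{2}c_1(\cE)^2 - c_2(\cE)$, as the upper bound
$$\int_X c_2(\cE).H^{n-2} \le \tfrac{1}{2} c_1(\cE)^2.H^{n-2} - \ch_2.$$
Next, since $c_1(\cE).H^{n-1} = \ch_1$ fixes $\mu_H(\cE) = \ch_1/r$, the slope bound $\mu_{\max,H}(\cE) \le \mu_{\max}$ gives $\mu_{\max,H}(\cE) - \mu_H(\cE) \le \alpha := \mu_{\max} - \ch_1/r$, so Theorem \ref{Bogomolov's-inequality} supplies a constant $\tilde C = \tilde C(X,H,r,\alpha)$ with
$$2r \int_X c_2(\cE).H^{n-2} - (r-1)\, c_1(\cE)^2.H^{n-2} \ge \tilde C.$$
Substituting the previous upper bound for $\int_X c_2(\cE).H^{n-2}$ and simplifying gives a \emph{lower} bound $c_1(\cE)^2.H^{n-2} \ge \tilde C + 2r\ch_2$.

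Combined with the fixed value $c_1(\cE).H^{n-1} = \ch_1$, this confines $[c_1(\cE)] \in N_H(X)$ to a bounded subset of $N_H(X) \otimes \RR$. Indeed, by Lemma \ref{finite-generation} the pairing on $N_H(X)_{\RR}$ has signature $(1,s-1)$ with the positive direction spanned by $H$; decomposing $[c_1(\cE)] = aH + D'$ with $D'.H^{n-1}=0$ forces $a = \ch_1/H^n$ to be fixed and the negative-definite component satisfies $(D')^2.H^{n-2} \in [\tilde C + 2r\ch_2 - a^2 H^n,\, 0]$, hence lies in a compact set. Since $N_H(X)$ is a finitely generated free $\ZZ$-module, only finitely many classes $c_1 \in N_H(X)$ can arise.

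For each such fixed $c_1$, the value $c_1^2.H^{n-2}$ is a fixed number, so $\int_X c_2(\cE).H^{n-2}$ is uniformly bounded above by the first step, and Theorem \ref{boundedness-3} applied with this $c_1$ gives boundedness of the subset of $\cB$ with $[c_1(\cE)]=c_1$. Since $\cB$ is the union of finitely many such bounded families, it is bounded. The key tension driving the argument is between the lower bound on $c_1(\cE)^2.H^{n-2}$ coming from Bogomolov's inequality and the Hodge-index constraint on $N_H(X)$; there is no serious obstacle once the machinery of the previous sections is in place, the only care required being that the argument be split over a finite set of numerical first Chern classes rather than applied directly.
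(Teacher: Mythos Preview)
Your proof is correct and follows essentially the same route as the paper's: apply Theorem \ref{Bogomolov's-inequality} to bound $c_1(\cE)^2.H^{n-2}$ from below, use the Hodge-index signature from Lemma \ref{finite-generation} to pin $[c_1(\cE)]$ to finitely many classes in $N_H(X)$, and finish with Theorem \ref{boundedness-3} for each fixed class. The only cosmetic difference is that the paper computes the lower bound on $c_1(\cE)^2.H^{n-2}$ directly from $\Delta(\cE) = c_1(\cE)^2.H^{n-2} - 2r\int_X\ch_2(\cE)H^{n-2}$, whereas you detour through the $c_2$-formulation before arriving at the same inequality $c_1(\cE)^2.H^{n-2} \ge \tilde C + 2r\ch_2$.
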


\begin{proof} 
By Theorem \ref{Bogomolov's-inequality} there exists a constant $\tilde C$ 
such that for all  $\cE \in \cB$ we have
$$\tilde C\le \int _X \Delta (\cE) H^{n-2}=  c_1 (\cE)^2.H^{n-2} - 2r \int _X \ch_2 (\cE). H^{n-2} .$$
Therefore $c_1 (\cE)^2.H^{n-2}\ge \tilde C+2r \,\ch _2$. Let us write $[c_1(\cE)]=\alpha [H]+D\in N_H (X)$, where $\alpha = \frac{\ch _1}{H^n}$.
Then $D.H^{n-1}=0$ and $c_1 (\cE)^2.H^{n-2}=\alpha ^2 H^n+D^2.H^{n-2}$ and we have
$$D^2.H^{n-2}\ge \tilde C+2r \,\ch _2 - \alpha ^2 H^n.$$
But by the Hodge index theorem (see Lemma \ref{finite-generation}) the intersection form is negative definite on $H^{\perp}\subset N_H(X)$, so there are only finitely many possibilities for $D$ and hence there are also finitely many possibilities for the classes $[c_1(\cE)]\in N_H (X)$. Now the assertion follows from Theorem \ref{boundedness-3}.
\end{proof}

\subsection{Application to F-divided sheaves on normal varieties}

In this section we use the above developed theory to reprove \cite[Theorem 2.1]{ES} in the style well-known from the smooth varieties.

\medskip

Let $X$ be a normal projective variety of dimension $\ge 1$ defined over an algebraically closed field $k$.
Let $j: U\hookrightarrow X$ be an open embedding of a big open subset contained in the regular locus of $X$.
Let $\EE= (\cE_m , \sigma _m)_{m\ge 0}$ be an F-divided sheaf on $U$, i.e., $\cE_m$ are coherent $\cO_X$-modules and $\sigma_m: F_X^* \cE_{m+1} \to \cE_m$ are isomorphisms of $\cO_X$-modules. Let us set $\tilde \cE _m:= j_*\cE_m$.

\begin{Lemma}
For every ample divisor $H$ we have  $ [c_1 (\tilde \cE _{m})]=0\in N_H(X)$ for all $m$ and $$ \lim _{m\to \infty }\int _{X}c_2 (\tilde \cE _{m})H^{n-2} =0.$$
\end{Lemma}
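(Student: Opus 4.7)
The plan is to first exploit the F-divided structure to relate $\tilde\cE_m$ and $\tilde\cE_{m+1}$ via reflexive Frobenius pullback on $X$, and then to use the Frobenius-compatibility of $c_1$ and of $c_2$ (intersected with $H^{n-2}$), together with finite generation of $N_H(X)$, to squeeze out both assertions.

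The key preliminary step is to show that
\[
\tilde\cE_m \simeq F_X^{[*]}\tilde\cE_{m+1}
\]
as coherent reflexive $\cO_X$-modules. To see this, I would use that the absolute Frobenius commutes with the open immersion $j: U \hookrightarrow X$ (so $F_X \circ j = j \circ F_U$) and that on the regular scheme $U$ the pullback $F_U^*\cE_{m+1}$ is already locally free. Restricting to the big open subset $U$, the F-divided isomorphism $\sigma_m$ gives $(F_X^{[*]}\tilde\cE_{m+1})|_U \simeq F_U^*\cE_{m+1} \simeq \cE_m = \tilde\cE_m|_U$. Since both $\tilde\cE_m$ and $F_X^{[*]}\tilde\cE_{m+1}$ are reflexive on $X$ and $U \subset X$ is big, Lemma \ref{restriction-to-open} promotes this to a global isomorphism.

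For the $c_1$-statement, taking determinants yields $c_1(\tilde\cE_m) = p \, c_1(\tilde\cE_{m+1})$ in $A^1(X)$, so iteratively $c_1(\tilde\cE_{m}) = p^j \, c_1(\tilde\cE_{m+j})$ in $A^1(X)$ for every $j \ge 0$. Pushing to $N_H(X)$, the class $[c_1(\tilde\cE_m)]$ is therefore divisible by $p^j$ for every $j$. Since $N_H(X)$ is a finitely generated free $\ZZ$-module by Lemma \ref{finite-generation}, no nonzero element is infinitely $p$-divisible, forcing $[c_1(\tilde\cE_m)] = 0$.

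For the $c_2$-statement, I would apply the Frobenius identity from Remark \ref{determinacy-c_2-higher-dim}, which, combined with the isomorphism above, gives
\[
\int_X c_2(\tilde\cE_m) H^{n-2} \;=\; \int_X c_2(F_X^{[*]}\tilde\cE_{m+1}) H^{n-2} \;=\; p^2 \int_X c_2(\tilde\cE_{m+1}) H^{n-2}.
\]
Iterating $m$ times from the base $\tilde\cE_0$ yields
\[
\int_X c_2(\tilde\cE_m) H^{n-2} = p^{-2m} \int_X c_2(\tilde\cE_0) H^{n-2},
\]
and since $\tilde\cE_0$ is a fixed coherent reflexive sheaf on a projective variety, the quantity on the right tends to $0$ as $m \to \infty$. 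The only real obstacle is the identification $\tilde\cE_m \simeq F_X^{[*]}\tilde\cE_{m+1}$; everything else is immediate once the Frobenius-equivariance on $X$ is upgraded from the one on $U$, and this upgrade is precisely what reflexivity and the bigness of $U$ are designed to provide.
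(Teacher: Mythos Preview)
Your proposal is correct and follows essentially the same route as the paper: extend the F-divided isomorphisms to $\tilde\cE_m\simeq F_X^{[*]}\tilde\cE_{m+1}$ via reflexivity on the big open $U$, deduce $c_1(\tilde\cE_0)=p^m c_1(\tilde\cE_m)$ and $\int_X c_2(\tilde\cE_0)H^{n-2}=p^{2m}\int_X c_2(\tilde\cE_m)H^{n-2}$, and conclude using that $N_H(X)$ is free of finite rank. The only cosmetic difference is that the paper phrases the $c_1$-vanishing through the bounded denominators of the pairing (Lemma~\ref{finite-generation}) rather than infinite $p$-divisibility of a single element, but these are equivalent observations.
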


\begin{proof}
The isomorphisms $\sigma _m: \cE _m\to F_X^{*}\cE _{m+1}$ extend uniquely to
 isomorphisms $\tilde \cE _m\to F_X^{[*]}\tilde \cE _{m+1}$.
In particular, we see that $c_1 (\tilde \cE _{0})= p^{m}  c_1(\tilde \cE _{m})$. By Lemma \ref{finite-generation}
there exists some $N$ such that for every Weil divisor $D$ on $X$ we have
 $$N\cdot c_1 (\tilde \cE _{0}).D.H^{n-2}= p^{m}  N \cdot c_1(\tilde \cE _{m}).D.H^{n-2}\in p^{m}\ZZ .$$
Since $N_H(X)$ is a free $\ZZ$-module, the class of $ c_1 (\tilde \cE _{0})$ in $N_H(X)$  must vanish. This also implies vanishing of all classes $ [c_1 (\tilde \cE _{m})]\in N_H(X)$.
The second assertion follows from equalities
 $\int _Xc_2 (\tilde \cE _{0}) H^{n-2}= p^{2m} \int _Xc_2 (\tilde \cE _{m})H^{n-2}$.
\end{proof}

\medskip

The following corollary is due to Esnault and Srinivas (see \cite[Theorem 2.1]{ES} for a slightly weaker statement).

\begin{Corollary}
There exists a bounded set $\cS$ of slope $H$-semistable sheaves such that for every $r>0$ and every rank $r$ F-divided sheaf 
$\EE= (\cE_m , \sigma _m)_{m\ge 0}$ on $U$ there exists  an integer $m_0(\EE){\ge 0}$ such that for all $m\ge m_0(\EE)$ 
 the sheaves $\cE_m$ lie in the set $\cS$.
\end{Corollary}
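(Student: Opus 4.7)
The plan is to verify the hypotheses of Corollary~\ref{boundedness-4} for the tail of each F-divided sheaf. Write $\tilde\cE_m := j_*\cE_m$. From the preceding lemma, $[c_1(\tilde\cE_m)]=0$ in $N_H(X)$ for all $m$, so in particular $c_1(\tilde\cE_m).H^{n-1}=0$ and $c_1(\tilde\cE_m)^2.H^{n-2}=0$; combined with $\int_X c_2(\tilde\cE_m) H^{n-2}\to 0$, this gives $\int_X \ch_2(\tilde\cE_m)H^{n-2}\to 0$. Thus for $m$ large the invariants $\ch_1.H^{n-1}$ and $\ch_2.H^{n-2}$ satisfy the numerical hypotheses of Corollary~\ref{boundedness-4} uniformly in $\EE$.

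The main step will be to show that $\tilde\cE_m$ is slope $H$-semistable once $m$ is large enough. My approach is to iterate the F-divided structure: on the regular locus $U$, where $F_X$ is flat, the composition of the $\sigma_k$ gives $\cE_0\cong (F_U^m)^*\cE_m$. Given any saturated rank-$r'$ reflexive subsheaf $\cG\subset\tilde\cE_m$ of slope $\mu_H(\cG)>0$, I would restrict to $U$, pull back along $F_U^m$, and push forward via $j_*$ to obtain a rank-$r'$ reflexive subsheaf of $\tilde\cE_0$ whose first Chern class equals $p^m c_1(\cG)$, hence of slope $p^m\mu_H(\cG)$; here I use that $U$ is big, so Chern classes and intersection numbers on $U$ and on $X$ coincide under $j_*,j^*$. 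By Lemma~\ref{finite-generation}, all intersection numbers $D.H^{n-1}$ lie in $\frac{1}{N}\ZZ$ for some fixed $N=N(X,H)$, so a positive slope of a subsheaf of rank at most $r$ is bounded below by $\frac{1}{rN}$. Consequently $\mu_{\max,H}(\tilde\cE_m)>0$ would force $\mu_{\max,H}(\tilde\cE_0)\ge p^m/(rN)$, which blows up with $m$ and contradicts the finiteness of $\mu_{\max,H}(\tilde\cE_0)$. Hence for $m\ge m_0(\EE)$ we have $\mu_{\max,H}(\tilde\cE_m)\le 0$, and since $\tilde\cE_m$ itself has slope $0$, equality holds and $\tilde\cE_m$ is slope $H$-semistable.

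With rank $r$ fixed, $\ch_1.H^{n-1}=0$, $\ch_2.H^{n-2}\ge -1$ (say, for $m\gg 0$), and $\mu_{\max,H}\le 0$, Corollary~\ref{boundedness-4} produces a bounded family $\cS_r$ of slope $H$-semistable reflexive sheaves that contains $\tilde\cE_m$ for all $m\ge m_0(\EE)$; pulling back by $j^*$ gives the required set (for fixed $r$, or the union $\cS=\bigcup_r \cS_r$ when one wants a single symbol). I expect the principal obstacle to be the semistability step: since $F_X$ fails to be flat at the singular points of $X$, slopes cannot be transported directly under $F_X^{[*]}$, and the Frobenius-pullback construction must be performed on the regular locus, the equivalence $\Refl(\cO_X)\simeq\Refl(\cO_U)$ used to return to $X$, and the discreteness of intersection values in $N_H(X)$ from Lemma~\ref{finite-generation} invoked to rule out any positive maximal destabilizing slope on the tail.
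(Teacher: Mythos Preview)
Your proposal is correct and follows essentially the same route as the paper: establish slope $H$-semistability of $\tilde\cE_m$ for large $m$ by iterating the Frobenius structure and exploiting discreteness of slopes, then feed the numerical data from the preceding lemma into a boundedness result. The paper cites Theorem~\ref{boundedness-3} (fixed class $[c_1]\in N_H(X)$ and a $c_2$-bound) and phrases the semistability step as the chain $\mu_{\max,H}(\tilde\cE_0)\ge p^m\mu_{\max,H}(\tilde\cE_m)$ together with $r!\cdot\mu_{\max,H}(\tilde\cE_m)\in\ZZ$, whereas you cite Corollary~\ref{boundedness-4} and spell out the subsheaf transport along $F_U^m$ explicitly; these are cosmetic variations of the same argument.

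One small slip: the union $\cS=\bigcup_r\cS_r$ you suggest is \emph{not} a bounded family (any bounded family has bounded rank), so that parenthetical should be dropped. The statement is to be read with the rank fixed, as the paper's own application of Theorem~\ref{boundedness-3} and the remark following the corollary make clear.
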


\begin{proof}
Let us fix  a positive real number $c_2$ and let  $\cS$ denote the set of slope $H$-semistable sheaves $\cF$ on $X$
such that  $ [c_1 ( \cF )]=0\in N_H(X)$ and $\int _{X}c_2 ( \cF )H^{n-2}\le c_2$. By Theorem  \ref{boundedness-3} this set is bounded. 

Now fix $\EE= (\cE_m , \sigma _m)_{m\ge 0}$ on $U$ and as before set $\tilde \cE _m:=j_*\cE_m$.
Since $$\mu _{\max, H} (\tilde \cE_m)\ge p \mu _{\max, H} ( \tilde \cE_{m+1}),$$ we have
$\mu _{\max, H} ( \tilde \cE_0)\ge p ^m\mu _{\max, H} ( \tilde \cE_{m})$.  Since $r!\cdot \mu _{\max, H} ( \tilde \cE_{m})\in \ZZ$, there exists $m_1(\EE)\in \ZZ _{\ge 0}$ such that for all $m\ge m_1(\EE)$ we have $\mu _{\max, H} ( \tilde \cE_{m})\le 0$. Since  $\mu _{H} ( \tilde \cE_{m})= 0$ we see that
the sheaves $\tilde \cE _m=j_*\cE_m$ are slope $H$-semistable  for all $m\ge m_1(\EE)$.
Now the above lemma implies that for any $\EE$ there exists  $m_0(\EE)\in \ZZ _{\ge 0}$  such that $\cE_m\in \cS$ for all $m\ge m_0(\EE)$.
\end{proof}

\begin{Remark}
The above proof shows that one can choose $m_0$ that does not depend on $\EE$ but only on the rank $r$ if and only if  $\int _{X}c_2 (\tilde \cE _{m})H^{n-2} =0$ for all $m\ge 0$. In general, this would follow if we knew that the second relative Chern classes of vector bundles defined in Subsection \ref{local-relative-RR} are rational with bounded denominators but this seems very unlikely.  However, this happens, e.g., if $X$ has only quotient singularities in codimension $2$ (cf. Theorem \ref{properties-of-ch_2-char-0}, (5)).
\end{Remark}

\section*{Acknowledgements}

The author would like to thank M. Enokizono for pointing out \cite{En}.

A large part of the paper was written while the author was an External Senior Fellow at Freiburg Institute for Advanced Studies (FRIAS), University of Freiburg, Germany. 
The author would like to thank Stefan Kebekus for his hospitality during the author's stay in FRIAS.

The  author was partially supported by Polish National Centre (NCN) contract numbers 2018/29/B/ST1/01232 and 2021/41/B/ST1/03741. 
The research leading to these results has received funding from the European Union's Horizon 2020 research and innovation programme under the Maria Sk{\l}o\-do\-wska-Curie grant agreement No 754340.

\end{document}